\newtheorem{condition}{ Condition}[section]
\newtheorem{definition}{ Definition}[section]
\newtheorem{theorem}{Theorem}[section]
\newtheorem{lemma}{Lemma}[section]
\newtheorem{proposition}{Proposition}[section]
\newtheorem{corollary}{Corollary}[section]
\newtheorem{remark}{{\it Remark}}[section]
\renewcommand{\div}{\text{\rm div}\,}
\newcommand{\vo}{\text{\bf u}\otimes}
\def\div{\mathop{\mathrm{div}}}
\title{Isothermal Navier-Stokes Equations and  Radon Transform}
\author{ P. I. Plotnikov,  W. Weigant,\\
{\small Lavryentyev Institute of Hydrodynamics,}\\{\small Lavrentyev
pr.~15, Novosibirsk 630090,
Russia (plotnikov$@$hydro.nsc.ru).}\\
{ \small Universit\"{a}t Bonn, Institute f\"{u}r Angewandte
Mathematik,}\\{\small Endenicher Alle 60, 53115 Bonn, Germany .} }
\date{}
\begin{document}
%siam_id=63530
%CODEN=SJCODC
%\slugger{sima}{2012}{xx}{xx}{xxxx--xxxx}

\setcounter{page}{1} \maketitle

\begin{abstract}
In the  paper we prove  the existence results for initial-value
 boundary value problems for
 compressible isothermal Navier-Stokes equations. We restrict ourselves to
2D case of a problem with no-slip condition for nonstationary motion
of viscous compressible  isothermal  fluid. However, the technique
of modeling and analysis presented here is general and can be used
for 3D problems.
\end{abstract}

Key words: Navier--Stokes equations, compressible fluids, Radon
transform\\
AMS:35Q30, 49J20, 76N10

\pagestyle{myheadings} \thispagestyle{plain} \markboth{P. I.
PLOTNIKOV AND W. WEIGANT }{ISOTHERMAL NAVIER-STOKES EQUATIONS}

\section{Introduction}
\subsection{Problem formulation}
Suppose a viscous compressible fluid occupies
  a bounded domain $\Omega\subset \mathbb{R}^2$.
The  state of the fluid is characterized  by the macroscopic
quantities: the {density}
 $\varrho(x,t)$ and the {velocity}
 $\mathbf{u}(x,t)$.
  The problem is to  find
$\mathbf u(x,t)$ and $\varrho(x,t)$ satisfying the following
equations and boundary conditions in the cylinder $Q_T=\Omega\times
(0,T)$.
 \begin{subequations}
  \label{stokes1}
\begin{gather}
\label{stokes2} \partial_t (\varrho \mathbf{u})+\div(\varrho
\vo\mathbf{u})+\nabla
\varrho=\div\mathbb S(\mathbf u)+\varrho\mathbf f\quad\text{in~~}Q_T,\\
 \label{stokes3}\partial_t\varrho+\div(\varrho \mathbf{u})=0\quad\text{in~~}Q_T,\\\label{stokes4}
\mathbf u=0\quad\text{on~} \partial\Omega\times (0,T),\\
\label{stokes5} \mathbf u(x,0)=\mathbf u_0(x),\quad
\varrho(x,0)=\varrho_0(x)\quad\text{in~} \Omega.
\end{gather}
Here, the vector field $\mathbf f$  denotes the density of external
mass forces,
 the {viscous stress
tensor}
 $\mathbb S(\mathbf u)$ has the form
\begin{equation}
\label{stokes4a}
 \mathbb S(\mathbf{u})=\nu_1 \big(\nabla \mathbf{u}+\nabla\mathbf{u}^\top\big)+ \nu_2 \text{div~}
 \mathbf u\,\mathbb{I},
\end{equation}
in which the {viscosity coefficients} satisfy the inequalities
$\nu_1>0$, $\nu_1+\nu_2\geq 0$.
\end{subequations}
It is necessary to notice that problem \eqref{stokes1} is the
simplest multidimensional boundary value problem for the
compressible Navier-Stokes equations. In  1986  Padula, see
\cite{padula1}, formulated the  result on existence of a weak
solution to problem \eqref{stokes1}, but the proof presented was
incomplete, see \cite{padula2}. The first nonlocal  results
concerning the mathematical theory of compressible Navier-Stokes
equations are due to P.-L. Lions. In monograph \cite{PLL} he
established the existence of a  renormalized solution to
nonstationary boundary value problem for the Navier-Stokes equations
with the pressure function $p\sim\varrho^\gamma$ for all
$\gamma>5/3$ in $3D$ case and for all $\gamma>3/2$ in $2D$ case.
 More recently, Feireisl, Novotn\'{y},  and Petzeltova', see
  \cite{FNP},  proved the existence result for all
 $\gamma>3/2$ in $3D$ case and for all $\gamma>1$ in $2D$ case,
 see also monographs \cite{FEIRBOOK}, \cite{NovotnyStra}, and \cite{PlotnSoc}
 for references and details.
The question on solvability of problem \eqref{stokes1} remained
open. The main difficulty is the so called concentration problem,
see \cite{PLL} ch.6.6. This means that the finite kinetic energy can
be concentrated in very small domains. Our goal is to relax the
restriction $\gamma>1$ and to prove the  existence of solutions to
problem \eqref{stokes1}. In order to make the presentation clearer
and avoid unnecessary technical difficulties, we assume that the
flow domain  and the given data satisfy  the  hypotheses:
\begin{condition}
\label{stokes6}
\begin{itemize}
\item
The flow domain  $\Omega\subset \mathbb{R}^2$ is a bounded domain
with $C^\infty$ boundary.
\item The  data satisfy
$ \varrho_0, \mathbf{u}_0\in L^\infty(\Omega)$, $\mathbf f\in
L^\infty(Q_T)$, and
\begin{equation}\label{stokes8}
\|\mathbf{u}_0\|_{W^{1,2}_0(\Omega)}+
\|\varrho_0\|_{L^\infty(\Omega)} +\|\mathbf f\|_{L^\infty(Q_T)}\leq
c_e,\quad \varrho_0>c>0,
\end{equation}
where $c_e$, c are  positive constants.
\end{itemize}
\end{condition}

\begin{remark}\label{stokesremark}
Further, we denote by $E$  generic constants depending only on $
\Omega, T, \|\varrho_0\|_{L^\infty(\Omega)}$,   $\|\mathbf
u_0\|_{L^2(\Omega)}, $  $ \|\mathbf f\|_{L^\infty(Q_T)}$, and
$\nu_i$.
\end{remark}

We claim that problem  \eqref{stokes1} admits a weak solution which
is defined as follows:
\begin{definition}\label{stokes9} A couple
$$\varrho\in L^\infty(0,T; L^1(\Omega)), \quad \mathbf{u}\in L^2(0,T; W^{1,2}_0(\Omega))
$$
is said to be a weak  solution
  to problem
\eqref{stokes1} if  $(\varrho, \mathbf u)$ satisfies
\begin{itemize}
\item The kinetic energy  is  bounded, i.e., $\varrho|\mathbf{u}|^2\in L^\infty(0, T; L^1(\Omega))$.
The density function is non-negative $\varrho\geq 0$.
\item The integral identity
\begin{multline}\label{stokes10}
\int_{Q_T}\big (\varrho\mathbf{u}\cdot
\partial_t\boldsymbol{\xi}+
\varrho\mathbf{u}\otimes\mathbf{u}:\nabla\boldsymbol{\xi}+\varrho\text{\rm
div~} \boldsymbol{\xi}-\mathbb
S(\mathbf{u}):\nabla\boldsymbol{\xi}\big)\,dxdt \\+\int_{Q_T}
\varrho\mathbf f\cdot\boldsymbol{\xi}\, dxdt+ \int_{\Omega
}(\varrho_0 \mathbf u_0)(x)\cdot \boldsymbol{\xi})(x,0)\,dx=0
\end{multline}
holds  for all vector fields $\boldsymbol{\xi}\in C^\infty(Q_T)$
vanishing  in a neighborhood of $\partial\Omega\times[0,T]$ and of~
$\Omega\times\{t=T\}$.
\item The integral identity
\begin{equation}\label{stokes11}
\int_{Q_T}\big (\varrho
\partial_t\psi+\varrho\mathbf{u}\cdot\nabla\psi
\big)\,dxdt+\int_{\Omega } \varrho_0(x)\psi(x,0)\, dx=0
\end{equation}
holds  for all  $\psi\in C^\infty(Q_T)$ vanishing in a neighborhood
of the top~ $\Omega\times\{t=T\}$.
\end{itemize}
\end{definition}
The following existence theorem  is the main result of the paper.
\begin{theorem}\label{stokes14}
Assume  that Condition  \ref{stokes6} is fulfilled. Then
 problem  \eqref{stokes1}  has a
weak  solution which meets all requirements of Definition
\ref{stokes9} and satisfies the estimate
\begin{gather}\label{stokes12}
\|\mathbf u\|_{L^2(0,T; W^{1,2}_0(\Omega))} +\|\varrho |\mathbf
u|^2\|_{L^\infty(0,T;L^1(\Omega))}+
\|\varrho\log(1+\varrho)\|_{L^\infty(0,T; L^1(\Omega))}\leq E,
\end{gather}
where the constant $E$ is as in Remark \ref{stokesremark}.
\end{theorem}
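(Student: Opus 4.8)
The plan is to follow the vanishing-artificial-pressure strategy that goes back to Lions and Feireisl--Novotn\'y--Petzeltov\'a, but to push the argument to the isothermal pressure law $p(\varrho)=\varrho$ by means of the Radon transform advertised in the title. First I would introduce, for $\varepsilon>0$, the regularized problem with pressure $p_\varepsilon(\varrho)=\varrho+\varepsilon\varrho^\beta$ for a fixed large $\beta$ (say $\beta\geq 4$), possibly together with an artificial-viscosity term $\varepsilon\Delta\varrho$ in the continuity equation. For this system the classical theory supplies a renormalized weak solution $(\varrho_\varepsilon,\mathbf u_\varepsilon)$, and the standard energy balance gives, uniformly in $\varepsilon$, the bounds
\begin{equation*}
\|\mathbf u_\varepsilon\|_{L^2(0,T;W^{1,2}_0(\Omega))}+\|\varrho_\varepsilon|\mathbf u_\varepsilon|^2\|_{L^\infty(0,T;L^1)}+\|\varrho_\varepsilon\log(1+\varrho_\varepsilon)\|_{L^\infty(0,T;L^1)}+\varepsilon\|\varrho_\varepsilon^\beta\|_{L^\infty(0,T;L^1)}\leq E,
\end{equation*}
where $E$ is the constant of Remark \ref{stokesremark}. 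These estimates already identify the weak limits: $\mathbf u_\varepsilon\rightharpoonup\mathbf u$ in $L^2(0,T;W^{1,2}_0)$, $\varrho_\varepsilon\rightharpoonup\varrho$ weakly-$*$ in $L^\infty(0,T;L^1)$ (with the $\varrho\log(1+\varrho)$ bound giving equiintegrability, hence convergence in $L^1$), and $\varrho_\varepsilon\mathbf u_\varepsilon\rightharpoonup\varrho\mathbf u$; passing to the limit in the momentum and continuity identities is then routine \emph{once} one knows that no mass concentrates and that the pressure converges, i.e. that $\varrho_\varepsilon\to\varrho$ strongly in $L^1(Q_T)$.

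The heart of the matter, and the step I expect to be the main obstacle, is exactly this compactness of the density in the borderline case $\gamma=1$. In the supercritical range $\gamma>1$ one uses the effective viscous flux identity together with higher integrability $\varrho_\varepsilon\in L^{\gamma+\theta}$ to run the Lions/Feireisl renormalization and conclude. At $\gamma=1$ the available a priori integrability is only the Orlicz bound $\varrho_\varepsilon\log(1+\varrho_\varepsilon)\in L^\infty(0,T;L^1)$, which is too weak for the usual oscillation-defect-measure argument: the classical concentration obstruction (\cite{PLL}, ch.~6.6) is genuine. Here is where the Radon transform enters: the idea is to test the momentum equation not against Bogovskii-type correctors of powers of $\varrho_\varepsilon$, but against fields built from $\varrho_\varepsilon$ integrated along lines, so that the weak continuity of the effective viscous flux is combined with the one-dimensional structure of the transport equation $\partial_t\varrho+\div(\varrho\mathbf u)=0$ along characteristics. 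Integrating the continuity equation over lines turns the 2D concentration problem into a family of 1D problems for the Radon transform $\mathcal R\varrho_\varepsilon(s,\omega,t)$, for which the DiPerna--Lions transport theory and the extra logarithmic integrability suffice to prevent concentration and to yield strong convergence of $\mathcal R\varrho_\varepsilon$; by injectivity of $\mathcal R$ and an interpolation with the weak-$L^1$ bound this is upgraded to $\varrho_\varepsilon\to\varrho$ strongly.

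With strong convergence of the density in hand, I would close the argument as follows. First pass to the limit $\varepsilon\to0$ in \eqref{stokes10}: the terms $\varrho_\varepsilon\mathbf u_\varepsilon\otimes\mathbf u_\varepsilon$ converge because $\sqrt{\varrho_\varepsilon}\mathbf u_\varepsilon$ is bounded in $L^\infty(0,T;L^2)$ and $\mathbf u_\varepsilon\to\mathbf u$ strongly in $L^2$ (Aubin--Lions, using $\partial_t(\varrho_\varepsilon\mathbf u_\varepsilon)$ bounded in a negative Sobolev space), the viscous term is linear, the pressure term $\varrho_\varepsilon+\varepsilon\varrho_\varepsilon^\beta\to\varrho$ by the strong $L^1$ convergence and $\varepsilon\|\varrho_\varepsilon^\beta\|_{L^1}\to0$, and the time-derivative and force terms are straightforward; similarly \eqref{stokes11} passes to the limit, and the renormalized continuity equation survives, guaranteeing $\varrho\geq0$ and $\varrho\in C([0,T];L^1)$ with the right initial trace. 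Finally, since all the a priori bounds are stable under weak-$*$ lower semicontinuity (for the norms) and under the strong convergence (for the Orlicz term), the limit $(\varrho,\mathbf u)$ satisfies \eqref{stokes12} with the same $E$, which completes the proof of Theorem \ref{stokes14}.
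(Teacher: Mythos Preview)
Your overall framework---regularize with artificial pressure $p_\varepsilon(\varrho)=\varrho+\varepsilon\varrho^\gamma$, extract uniform energy bounds, pass to the limit---is the paper's framework. But the role you assign to the Radon transform, and the target you aim it at, are both off, and the step you sketch does not go through.

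You propose to integrate the \emph{continuity} equation along lines, obtain a family of one-dimensional transport equations for $\mathcal R\varrho_\varepsilon(\boldsymbol\omega,\tau,t)$, run DiPerna--Lions theory on those, deduce strong convergence of $\mathcal R\varrho_\varepsilon$, and then lift this to strong convergence of $\varrho_\varepsilon$ by injectivity. The first move already fails: integrating $\partial_t\varrho+\div(\varrho\mathbf u)=0$ over $\{\boldsymbol\omega\cdot x=\tau\}$ gives $\partial_t\Phi+\partial_\tau\!\int_{\boldsymbol\omega\cdot x=\tau}\varrho(\mathbf u\cdot\boldsymbol\omega)\,dl=0$, and the flux is not a function of $\Phi$, so there is no closed 1D transport problem on which to renormalize. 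Nor does the paper ever invoke an effective-viscous-flux identity or prove strong convergence of $\varrho_\varepsilon$.

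What the paper actually does is test the \emph{momentum} equation (its first component, after rotating $\boldsymbol\omega$ to $\mathbf e_1$) against $\varphi=\zeta[H]_h$, where $H(x_1,t)=\int_{-\infty}^{x_1}\Psi(s,t)\,ds$ and $\Psi$ is a mollification of the Radon transform of $\zeta\varrho_\varepsilon$. The contributions of $\partial_t(\varrho u_1)$ and $\div(\varrho u_1\mathbf u)$ combine, after a Cauchy--Schwarz inequality on the line integrals, into a nonnegative term; the pressure term produces $\int\Psi^2$; and one obtains the uniform bound
\[
\sup_{\boldsymbol\omega\in\mathbb S^1}\int_0^T\!\!\int_{\mathbb R}\Phi(\boldsymbol\omega,\tau,t)^2\,d\tau\,dt\leq c(\zeta)E.
\]
By the Fourier slice theorem this is $\zeta\varrho_\varepsilon\in L^2(0,T;H^{-1/2}(\mathbb R^2))$ uniformly in $\varepsilon$. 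That negative-Sobolev bound is then fed (through estimates on $\mathbf F=\nabla\Delta^{-1}(\zeta\varrho\mathbf u)$ and a second, more elaborate test of the momentum equation) into a proof that $\varrho_\varepsilon\in L^{1+\lambda}_{\mathrm{loc}}(Q_T)$ for every $\lambda<1/6$, uniformly in $\varepsilon$. This higher integrability is the decisive estimate: it places $\varrho_\varepsilon$, $\varrho_\varepsilon\mathbf u_\varepsilon$, $\varrho_\varepsilon\mathbf u_\varepsilon\otimes\mathbf u_\varepsilon$ in reflexive Bochner spaces $L^r(0,T;L^s)$ with $r>2$, so that Dubinskii--Lions compactness in $L^2(0,T;W^{-1,2})$ suffices to identify $\overline{\varrho\mathbf u}=\varrho\mathbf u$ and $\overline{\varrho\mathbf u\otimes\mathbf u}=\varrho\mathbf u\otimes\mathbf u$. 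Since the pressure is linear in $\varrho$, weak convergence is enough for that term, and no strong convergence of the density is proved or required.

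In short, the missing idea in your proposal is that the Radon transform is a device for \emph{higher integrability} of the density (via an $H^{-1/2}$ bound obtained by testing the momentum equation), not a route to strong convergence via 1D renormalization of the continuity equation.
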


The next theorem, which is the second main result of the paper,
shows that a weak solution to problem \eqref{stokes1}  has extra
regularity properties.
 \begin{theorem}\label{stokes16} Let   Condition  \ref{stokes6} be satisfied. Assume that
 $( \varrho, \mathbf u)$  meets all requirements
 of Theorem \ref{stokes14}. Furthermore assume that $\mathbf u$ and $\varrho$ are extended by $0$ to $\mathbb R^2\times (0,T)$.  Then for every nonnegative function $\zeta\in C^\infty_0(\mathbb R^2)$ with
 $\text{\rm spt~}\zeta\Subset\Omega$,
 \begin{equation}\label{stokes17}
  \text{\rm ess}\sup\limits_{\boldsymbol{\omega}\in \mathbb S^1}  \int\limits_0^T\int\limits_{-\infty}^\infty \Phi(\boldsymbol{\omega},\tau,t)^2\, d\tau dt\leq c(\zeta)E,
 \end{equation}
 where $\Phi$ is the Radon transform of $\zeta(x)\varrho(x,t)$,
 \begin{equation}\label{stokes18}
  \Phi(\boldsymbol{\omega}, \tau, t) =\int\limits_{  \boldsymbol{\omega}\cdot x=\tau}
  \zeta(x)\varrho(x,t)\, dl.
 \end{equation}
 Moreover, the function $\zeta\varrho$ admits the estimates
 \begin{equation}\label{19}\begin{split}
    \|\zeta\varrho\|_{L^2(0,T; H^{-1/2}(\mathbb R^2))}\leq c(\zeta)E, \\
    \|\zeta\varrho\|_{L^{1+\lambda}(Q_T)}\leq c(\zeta, \lambda)E \text{~~for all~~}
    \lambda\in [0, 1/6).
\end{split} \end{equation}
 Here $c(\zeta)$ depends only on $\zeta$ and $c(\zeta,\lambda)$ depends only on $\zeta$, $\lambda$.
 \end{theorem}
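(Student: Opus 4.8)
The plan is to reduce, for each fixed $\boldsymbol\omega\in\mathbb S^1$, the two–dimensional system to a one–dimensional balance law in $(\tau,t)$ in which $\Phi=R(\zeta\varrho)$ plays the role of a pressure, and then to run a one–dimensional pressure estimate. Write $u_{\boldsymbol\omega}:=\mathbf u\cdot\boldsymbol\omega$ and $R(\cdot)(\boldsymbol\omega,\tau,t):=\int_{\boldsymbol\omega\cdot x=\tau}(\cdot)\,dl$. Multiplying the continuity equation \eqref{stokes3} by $\zeta$ and using $R(\div\mathbf g)=\partial_\tau R(\mathbf g\cdot\boldsymbol\omega)$ gives $\partial_t\Phi+\partial_\tau A=G$ with $A:=R(\zeta\varrho\,u_{\boldsymbol\omega})$ and $G:=R(\varrho\,\mathbf u\cdot\nabla\zeta)$. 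Doing the same to the $\boldsymbol\omega$–component of \eqref{stokes2} — noting that a full gradient is converted into $\boldsymbol\omega\,\partial_\tau$, and that $\div\mathbb S(\mathbf u)\cdot\boldsymbol\omega$ becomes $\partial_\tau\mathcal D$ with $\mathcal D:=R\bigl(\zeta\,\boldsymbol\omega\cdot\mathbb S(\mathbf u)\boldsymbol\omega\bigr)$, up to commutators with $\nabla\zeta$ — yields
\begin{equation*}
\partial_t A+\partial_\tau K+\partial_\tau\Phi-\partial_\tau\mathcal D=\mathcal R,\qquad K:=R(\zeta\varrho\,u_{\boldsymbol\omega}^2),
\end{equation*}
where $\mathcal R$ collects the body force and the commutators with $\nabla\zeta$, $\mathcal R=R\bigl(\varrho u_{\boldsymbol\omega}(\mathbf u\cdot\nabla\zeta)+\varrho\,\boldsymbol\omega\cdot\nabla\zeta-(\mathbb S(\mathbf u)\boldsymbol\omega)\cdot\nabla\zeta+\zeta\varrho\,\mathbf f\cdot\boldsymbol\omega\bigr)$.

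To prove \eqref{stokes17} I would test the momentum identity with the antiderivative $\Psi(\boldsymbol\omega,\tau,t):=\int_{-\infty}^\tau\Phi(\boldsymbol\omega,\sigma,t)\,d\sigma$. Since $\varrho\ge 0$ and $\zeta\ge 0$, $\Phi\ge 0$ and is supported in a bounded $\tau$–interval, so $0\le\Psi\le\int_{\mathbb R^2}\zeta\varrho\,dx=\int_{\mathbb R^2}\zeta\varrho_0\,dx\le c(\zeta)E$; and from the continuity relation $\partial_t\Psi=\mathcal G-A$, with $\mathcal G(\boldsymbol\omega,\tau,t)=\int_{\{\boldsymbol\omega\cdot x<\tau\}}\varrho\,\mathbf u\cdot\nabla\zeta\,dx$ bounded by $c(\zeta)E$ via Theorem \ref{stokes14}. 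Integrating by parts in $\tau$ and $t$,
\begin{multline*}
\int_0^T\!\!\int_{\mathbb R}\Phi^2\,d\tau dt=\int_{\mathbb R}[A\Psi]_{t=0}^{t=T}\,d\tau-\int_0^T\!\!\int_{\mathbb R}A\,\mathcal G\,d\tau dt\\
+\Bigl(\int_0^T\!\!\int_{\mathbb R}A^2\,d\tau dt-\int_0^T\!\!\int_{\mathbb R}K\,\Phi\,d\tau dt\Bigr)+\int_0^T\!\!\int_{\mathbb R}\mathcal D\,\Phi\,d\tau dt-\int_0^T\!\!\int_{\mathbb R}\mathcal R\,\Psi\,d\tau dt.
\end{multline*}
The decisive point is that the line–wise Cauchy–Schwarz inequality $A^2=\bigl(\int_{\boldsymbol\omega\cdot x=\tau}\sqrt{\zeta\varrho}\cdot\sqrt{\zeta\varrho}\,u_{\boldsymbol\omega}\,dl\bigr)^2\le\Phi\,K$ makes the bracketed "kinetic" block $\le 0$ and so it drops out. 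The boundary term is bounded by $\|\Psi(t)\|_{L^\infty_\tau}\|\zeta\varrho\,\mathbf u(t)\|_{L^1}\le c(\zeta)E$ at $t=0,T$; the $\mathcal D$–term is split by Young's inequality, $\tfrac12\int\!\!\int\Phi^2$ being absorbed and the rest controlled by $\|\mathcal D(\boldsymbol\omega,\cdot,t)\|_{L^2_\tau}\le c(\zeta)\|\nabla\mathbf u(t)\|_{L^2}$; and, since $\Psi$ and $\mathcal G$ are bounded while $\|Rh\|_{L^1_\tau}=\|h\|_{L^1}$, the terms $\mathcal R\Psi$ and $A\mathcal G$ reduce to the $L^1(Q_T)$ norms of $\varrho$, $\varrho|\mathbf u|^2$, $\varrho|\mathbf f|$, $|\nabla\mathbf u|$, all $\le E$. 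Choosing the splitting parameter small gives \eqref{stokes17}; the uniformity in $\boldsymbol\omega$ is because $R$ maps $L^2$–functions supported in $\mathrm{spt}\,\zeta$ into $L^2(\mathbb R_\tau)$ with norm independent of $\boldsymbol\omega$ (Minkowski's inequality), and because the commutators are only needed in $L^1$, which is direction–free.

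The estimates \eqref{19} follow from \eqref{stokes17}. By the Fourier slice theorem and Plancherel, $\|f\|_{\dot H^{-1/2}(\mathbb R^2)}^2=c\int_{\mathbb S^1}\|Rf(\boldsymbol\omega,\cdot)\|_{L^2(\mathbb R)}^2\,d\boldsymbol\omega$; integrating in $t$ and exchanging $\int_0^T$ with $\int_{\mathbb S^1}$ by Fubini gives $\int_0^T\|\zeta\varrho(t)\|_{\dot H^{-1/2}}^2\,dt\le c\,\text{\rm ess}\sup_{\boldsymbol\omega}\int_0^T\!\!\int_{\mathbb R}\Phi^2\le c(\zeta)E$, and the inhomogeneous norm is recovered by adding $\|\zeta\varrho(t)\|_{L^1}\le c(\zeta)$. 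For the $L^{1+\lambda}$ bound one combines \eqref{stokes17} with $\|\varrho\log(1+\varrho)\|_{L^\infty(0,T;L^1)}\le E$ from Theorem \ref{stokes14}: for a.e.\ $t$ the superlevel set $E_\mu(t):=\{\zeta\varrho(\cdot,t)>\mu\}$ satisfies both $\mu\,R\chi_{E_\mu(t)}(\boldsymbol\omega,\cdot)\le\Phi(\boldsymbol\omega,\cdot,t)$ — hence $\|R\chi_{E_\mu(t)}(\boldsymbol\omega,\cdot)\|_{L^2_\tau}\le\mu^{-1}\|\Phi(\boldsymbol\omega,\cdot,t)\|_{L^2_\tau}$ for every $\boldsymbol\omega$ — and $\mu\log(1+\mu)|E_\mu(t)|\le\|\zeta\varrho\log(1+\varrho)(t)\|_{L^1}$, and a layer–cake integration interpolating these two controls (using also the compact support and boundedness of $\zeta$) yields, after optimising the interpolation, $\|\zeta\varrho\|_{L^{1+\lambda}(Q_T)}\le c(\zeta,\lambda)E$ for every $\lambda<1/6$.

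The hard part is not the algebra but making the above rigorous for a merely weak solution: the test function $\zeta\Psi$, built from a Radon antiderivative, is non–local, so the reduced equations and the multiplier identity must be established first on the smooth approximate solutions constructed in the proof of Theorem \ref{stokes14}, with constants uniform in the approximation parameter, and then passed to the limit using weak lower semicontinuity of the $L^2$–norm; the endpoint boundary term at $t=T$ requires the energy bound up to $t=T$, which is available from the construction or by letting $T$ range over Lebesgue points. Beyond that, the only delicate points are the bookkeeping showing that after integration by parts every surviving term is either manifestly nonpositive (the kinetic block), absorbable (the viscous $\mathcal D$–term), or reducible to an $L^1(Q_T)$ quantity controlled by Theorem \ref{stokes14}, and the quantitative interpolation that pins down the exponent in \eqref{19}.
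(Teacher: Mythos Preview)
Your argument for \eqref{stokes17} and the first line of \eqref{19} is essentially the paper's own: the paper also projects the system onto a line, tests the $\boldsymbol\omega$--component of momentum with the antiderivative of the Radon transform of $\zeta\varrho$, and uses the line--wise Cauchy--Schwarz inequality $A^2\le \Phi K$ to kill the kinetic block; the paper carries this out with a mollifier $[\cdot]_h$ on the regularized solutions $(\varrho_\varepsilon,\mathbf u_\varepsilon)$ (their Lemmas~4.1--4.9) and then passes to the limit, exactly as you anticipate in your last paragraph. The $H^{-1/2}$ bound then follows from the Fourier slice identity (their Lemma~\ref{sobolevlemma1}), as you say.

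The gap is in your derivation of the second line of \eqref{19}. A layer--cake interpolation between the Radon $L^2$ bound and the $L\log L$ bound does not, as stated, produce $\lambda<1/6$. Both of your controls on the superlevel set amount to $|E_\mu(t)|\lesssim C(t)\,\mu^{-1}$ (up to logarithms): the $L\log L$ bound gives $|E_\mu(t)|\le C/(\mu\log\mu)$, and your Radon bound $\|R\chi_{E_\mu}\|_{L^2_\tau}\le\mu^{-1}\|\Phi\|_{L^2_\tau}$ together with the compact support only gives $|E_\mu(t)|=\int R\chi_{E_\mu}\,d\tau\le c(\zeta)\mu^{-1}\|\Phi(t)\|_{L^2_\tau}$, which is again $O(\mu^{-1})$ with a constant merely in $L^2_t$. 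Neither makes $\int_1^\infty\mu^\lambda|E_\mu|\,d\mu$ finite for any $\lambda>0$, and ``optimising the interpolation'' cannot manufacture the missing decay: you have no input that is genuinely better than $L^1$ in the spatial Lebesgue scale.

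The paper obtains $L^{1+\lambda}$ by a completely different and considerably heavier mechanism (Sections~\ref{momentum}--\ref{lpestimate}). From the $H^{-1/2}$ bound on $\zeta\varrho$ it first deduces an $H^{-s}$ bound on $\zeta\varrho\mathbf u$ (Lemma~\ref{sobolevlemma2}), then introduces the Cauchy--Riemann potential $\mathbf F=(\operatorname{div},-\operatorname{rot})\Delta^{-1}(\zeta\varrho\mathbf u)$ and proves, via Bessel--kernel comparison and interpolation, that $\mathbf F\in L^2(0,T;L^{3-\nu}_{\rm loc})$ for every $\nu>0$ (Lemmas~\ref{momentlemma1}--\ref{momentlemma2}). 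The $L^{1+\lambda}$ estimate is then extracted from the momentum equation tested with $\boldsymbol\xi=\zeta\nabla\Delta^{-1}[\zeta\psi(\varrho)]_h$, $\psi(s)\sim s^\lambda$; after a commutator identity (their Proposition~\ref{lebegproposition}) every term is bounded using the $L^{3-\nu}$ control on $\mathbf F$, and the constraint $\lambda<1/6$ is exactly the H\"older condition $(3-\nu)^{-1}+(1+2\lambda)/2\le 1$ needed to close those estimates. The specific threshold $1/6$ is thus an artifact of this pressure--test--function machinery, not of any real interpolation between $H^{-1/2}$ and $L\log L$; your sketch does not supply an alternative route to it.
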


 The remaining part of the paper is devoted to the proof of these theorems.
In sections \ref{preliminaries} and \ref{regularized} we collect
basic facts on Sobolev spaces, the Radon transform, and the
isentropic  Navier-Stokes equations. Section \ref{radon} is the
heart of the work. Here  we derive the $L^2$-estimates for the Radon
transform of the density function $\varrho$. In sections
\ref{momentum} and \ref{lpestimate} we  prove that the density is
locally  integrable with exponent $1+\lambda<7/6$. In section
\ref{final} we complete the proof of Theorems \ref{stokes14} and
\ref{stokes16}.

\section{Preliminaries}\label{preliminaries}
\subsection{Sobolev spaces. Radon transform. Multiplicators}
For every $s\in \mathbb R$, denote by $H^s(\mathbb R^2)$ the Sobolev
space of all tempered distributions $u$ in $\mathbb R^2$ with the
finite norm
\begin{equation}\label{sobolev1}
    \|u\|_{H^s(\mathbb R^2)}=\|(1+|\xi|^2)^{s/2}\,\mathfrak F{u}\|_{L^2(\mathbb R^2)},
\end{equation}
where $\mathfrak F{u}(\xi)$ is the Fourier transform of $u$. For all
nonnegative integers $k$, the space $H^k(\mathbb R^2)$ coincides
with $W^{k,2}(\mathbb R^2)$.   For every $u\in L^2(\mathbb R^2)$ and
$s\geq 0$ we have
\begin{equation}\label{sobolev2}
    \|u\|_{H^{-s}(\mathbb R^2)}=\sup\limits_{g\in H^{s}(\mathbb R^2)} \frac{\int_{\mathbb R^2}ug\,dx}{
    \|g\|_{H^{s}(\mathbb R^2)}}.
\end{equation}
Introduce  the  Bessel kernel $G_1=\mathfrak
F^{-1}(1+|\xi|^2)^{-1/2}$.  It is well-known that it  is strictly
positive and  analytic in $\mathbb R^2\setminus\{0\}$. Moreover, the
Bessel kernel admits the estimates
\begin{equation}\label{sobolev3}\begin{split}
  c^{-1} |z|^{-1}\leq G_1(z)\leq  c |z|^{-1}\text{~~for~~} |z|\leq 1, \quad
   G_1(z)\leq  c |z|^{-1}e^{-|z|}\text{~~for~~} |z|\geq 1.
\end{split}\end{equation}
In particular, for every $N>0$ there exists a constant $e(N)>0$ with
the property
\begin{equation}\label{sobolev4}\begin{split}
  e(N) |z|^{-1}\leq G_1(z)\leq  c |z|^{-1}\text{~~for~~} |z|\leq N.
\end{split}\end{equation}
The equality
\begin{equation}\label{sobolev5}
    \|G_1* u\|_{H^{s+1}(\mathbb R^2)}\,=\|u\|_{H^{s}(\mathbb R^2)}.
\end{equation}
holds true for all $u\in H^s(\mathbb R^2)$, $s\in\mathbb R$.

The next lemma constitutes  Sobolev estimates for the functions with
integrable Radon transform.
\begin{lemma}\label{sobolevlemma1} Let $g\in L^2(\mathbb R^2)$ be a compactly supported.
Then
\begin{equation}\label{sobolev6}
    \|g\|_{H^{-1/2}(\mathbb R^2)}^2\leq
    \frac{1}{4\pi}\int\limits_{\mathbb S^1\times \mathbb R} \Phi(\boldsymbol \omega, \tau)^2\,d\boldsymbol \omega\, d\tau, \text{~~where~~}\Phi(\boldsymbol \omega, \tau)=\int\limits_{\boldsymbol \omega \cdot x=\tau} g(x)\, dl.
\end{equation}
\end{lemma}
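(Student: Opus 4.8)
The plan is to combine the Fourier slice theorem with the Plancherel identity and a passage to polar coordinates. Since $g\in L^2(\mathbb R^2)$ has compact support, it also lies in $L^1(\mathbb R^2)$; parametrising $\mathbb R^2$ by $x=\tau\boldsymbol\omega+s\,\boldsymbol\omega^{\perp}$ and applying Fubini's theorem, one sees that for each $\boldsymbol\omega\in\mathbb S^1$ the line integral $\Phi(\boldsymbol\omega,\tau)$ is well defined for a.e.\ $\tau\in\mathbb R$, is supported in a fixed bounded interval, and $\tau\mapsto\Phi(\boldsymbol\omega,\tau)$ is integrable. All the manipulations below are then licit by absolute convergence.

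First I would establish the Fourier slice identity: denoting by $\widehat\Phi(\boldsymbol\omega,\cdot)$ the one–dimensional Fourier transform of $\Phi(\boldsymbol\omega,\cdot)$ and using Fubini,
\[
\widehat\Phi(\boldsymbol\omega,\sigma)=\frac{1}{\sqrt{2\pi}}\int_{\mathbb R}e^{-i\sigma\tau}\Phi(\boldsymbol\omega,\tau)\,d\tau=\frac{1}{\sqrt{2\pi}}\int_{\mathbb R^2}e^{-i\sigma\,\boldsymbol\omega\cdot x}g(x)\,dx=\sqrt{2\pi}\,\mathfrak F g(\sigma\boldsymbol\omega),
\]
where $\mathfrak F$ is normalised so as to be unitary on $L^2(\mathbb R^2)$, as in \eqref{sobolev1}. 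Applying the one–dimensional Plancherel theorem in $\tau$ for fixed $\boldsymbol\omega$ and integrating over $\mathbb S^1$ gives
\[
\int\limits_{\mathbb S^1\times\mathbb R}\Phi(\boldsymbol\omega,\tau)^2\,d\boldsymbol\omega\,d\tau=2\pi\int\limits_{\mathbb S^1}\int\limits_{\mathbb R}|\mathfrak F g(\sigma\boldsymbol\omega)|^2\,d\sigma\,d\boldsymbol\omega.
\]
Splitting the inner integral into $\sigma>0$ and $\sigma<0$ and sending $\boldsymbol\omega\mapsto-\boldsymbol\omega$ in the latter part, the map $(\sigma,\boldsymbol\omega)\mapsto\sigma\boldsymbol\omega$ covers $\mathbb R^2$ exactly twice; in polar coordinates $\xi=\sigma\boldsymbol\omega$, $\sigma>0$, one has $d\xi=|\xi|\,d\sigma\,d\boldsymbol\omega$, whence
\[
\int\limits_{\mathbb S^1\times\mathbb R}\Phi(\boldsymbol\omega,\tau)^2\,d\boldsymbol\omega\,d\tau=4\pi\int\limits_{\mathbb R^2}\frac{|\mathfrak F g(\xi)|^2}{|\xi|}\,d\xi.
\]
Because $\mathfrak F g$ is bounded (as $g\in L^1$), $|\xi|^{-1}$ is locally integrable in the plane, and $|\mathfrak F g|^2\in L^1$, the right–hand side is finite; in particular $\Phi\in L^2(\mathbb S^1\times\mathbb R)$.

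It then remains to insert the elementary pointwise bound $(1+|\xi|^2)^{-1/2}\le|\xi|^{-1}$ into the definition of the negative–order norm:
\[
\|g\|_{H^{-1/2}(\mathbb R^2)}^2=\int\limits_{\mathbb R^2}\frac{|\mathfrak F g(\xi)|^2}{(1+|\xi|^2)^{1/2}}\,d\xi\le\int\limits_{\mathbb R^2}\frac{|\mathfrak F g(\xi)|^2}{|\xi|}\,d\xi=\frac{1}{4\pi}\int\limits_{\mathbb S^1\times\mathbb R}\Phi(\boldsymbol\omega,\tau)^2\,d\boldsymbol\omega\,d\tau,
\]
which is exactly \eqref{sobolev6}. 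The argument contains no genuine analytic difficulty; the one point requiring care is the bookkeeping of the normalisation constants of the one– and two–dimensional Fourier transforms and of the Jacobian of the polar change of variables, so that the final constant comes out to be $1/(4\pi)$, while the Fubini and Plancherel steps are routine and are justified by the compact support and $L^2$–integrability of $g$.
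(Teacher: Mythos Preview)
Your proof is correct and follows essentially the same route as the paper's: the Fourier slice identity, Plancherel in the $\tau$-variable, the polar change of variables, and the pointwise bound $(1+|\xi|^2)^{-1/2}\le|\xi|^{-1}$. The only cosmetic difference is that the paper obtains the factor of $2$ by invoking the real-valuedness of $\Phi$ (so that $\int_0^\infty|\mathfrak F_\lambda\Phi|^2\,d\lambda=\tfrac12\int_{\mathbb R}|\mathfrak F_\lambda\Phi|^2\,d\lambda$), whereas you phrase it as the double covering $(\sigma,\boldsymbol\omega)\mapsto\sigma\boldsymbol\omega$; these are equivalent.
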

\begin{proof} The proof is in Appendix \ref{applem1}
\end{proof}

 The last lemma  concerns   multiplicative
 properties of   Sobolev spaces.
\begin{lemma}\label{sobolevlemma2} Let $s>1/2$, $g\in L^2(\mathbb R^2)$  and $u\in H^1(\mathbb R^2)$.
Then there is $c(s)>0$ such that
\begin{equation}\label{sobolev8}
    \|gu\|_{H^{-s}(\mathbb R^2)}\leq c(s) \|g\|_{H^{-1/2}(\mathbb R^2)}\|u\|_{H^{1}(\mathbb R^2)}.
\end{equation}
\end{lemma}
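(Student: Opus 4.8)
The plan is to reduce the inequality \eqref{sobolev8} to a pairing estimate via duality and then exploit the characterization \eqref{sobolev2} together with the Sobolev embedding. Since $s>1/2$ we have $H^s(\mathbb R^2)\hookrightarrow L^\infty(\mathbb R^2)$, and more precisely the product of a function in $H^1$ and a function in $H^s$ lies in $H^{1/2}$. So first I would write, for an arbitrary test function $\varphi\in H^s(\mathbb R^2)$,
\[
 \int_{\mathbb R^2} (gu)\varphi\, dx = \int_{\mathbb R^2} g\,(u\varphi)\, dx \le \|g\|_{H^{-1/2}(\mathbb R^2)}\,\|u\varphi\|_{H^{1/2}(\mathbb R^2)},
\]
where the last step uses the duality \eqref{sobolev2} between $H^{-1/2}$ and $H^{1/2}$ (valid here since $g\in L^2$). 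Taking the supremum over $\varphi$ with $\|\varphi\|_{H^s}\le 1$ and using \eqref{sobolev2} again on the left — identifying $\|gu\|_{H^{-s}}$ with that supremum — reduces everything to the bilinear bound $\|u\varphi\|_{H^{1/2}}\le c(s)\|u\|_{H^1}\|\varphi\|_{H^s}$.

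The core of the argument is therefore this product estimate in Sobolev spaces: if $u\in H^1(\mathbb R^2)$ and $\varphi\in H^s(\mathbb R^2)$ with $s>1/2$, then $u\varphi\in H^{1/2}(\mathbb R^2)$ with norm controlled by the product of the norms. I would prove it by a Fourier/paraproduct decomposition. On the Fourier side, $\widehat{u\varphi}(\xi)=\int \widehat u(\xi-\eta)\widehat\varphi(\eta)\,d\eta$, and I split the integration region according to whether $|\xi-\eta|\lesssim|\eta|$ or $|\eta|\lesssim|\xi-\eta|$. In the first regime $\langle\xi\rangle^{1/2}\lesssim\langle\eta\rangle^{1/2}$, and one estimates $\langle\eta\rangle^{1/2}\le\langle\eta\rangle^{s}$ (here $s\ge 1/2$ is used), pulling out $\|\varphi\|_{H^s}$ and bounding the remaining convolution of $\langle\cdot\rangle^{1/2}\widehat u$ against an $L^1$ weight by $\|u\|_{H^1}$; the integrability of the leftover weight $\langle\eta\rangle^{1/2-s}$ against $\langle\xi-\eta\rangle^{-1}$ in dimension two is exactly where the strict inequality $s>1/2$ enters, since one needs a net negative power greater than $2$. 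In the symmetric regime $\langle\xi\rangle^{1/2}\lesssim\langle\xi-\eta\rangle^{1/2}\le\langle\xi-\eta\rangle$, and one pulls out $\|u\|_{H^1}$, leaving a convolution against $\langle\eta\rangle^{-s}$, which is $L^2$-bounded by Young's or Schur's inequality once $s>1$... — here instead one keeps $\langle\eta\rangle^{-s+1/2}$ paired with $\langle\xi-\eta\rangle^{1/2}$ and again uses Schur's test with the dimension-two integrability. Schur's test (or the Hausdorff–Young/Young convolution inequality applied to the appropriate split) is the cleanest way to package both pieces.

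I expect the main obstacle to be the careful bookkeeping of the frequency decomposition so that in each regime one genuinely gets an $L^1$ convolution kernel in $\mathbb R^2$: the margin is provided solely by $s-1/2>0$, so the estimates must be arranged to use it and not waste it. An alternative, slightly softer route avoiding explicit Fourier computations is to note that $H^1(\mathbb R^2)\hookrightarrow L^p$ for every $p<\infty$ and $H^s(\mathbb R^2)\hookrightarrow L^\infty$ when $s>1$; for $1/2<s\le 1$ one uses $H^s\hookrightarrow L^{q}$ with $q=q(s)<\infty$ large, then Hölder gives $u\varphi\in L^r$ for suitable $r>1$, and one interpolates with an $L^1$-type bound on derivatives to land in $H^{1/2}$ via the fractional Sobolev (Gagliardo–Nirenberg) inequality. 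Either way the constant depends only on $s$, as claimed. I would present the paraproduct version as the main proof since it is self-contained and makes the role of $s>1/2$ transparent.
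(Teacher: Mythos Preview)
Your duality reduction is exactly what the paper does: both arguments begin by reducing \eqref{sobolev8} to the product estimate
\[
\|u\varphi\|_{H^{1/2}(\mathbb R^2)}\leq c(s)\,\|u\|_{H^{1}(\mathbb R^2)}\|\varphi\|_{H^{s}(\mathbb R^2)},\qquad s>1/2.
\]
The difference is in how this product estimate is proved. The paper does not touch the Fourier side at all; instead it sets $\delta=s-1/2$ and shows, using only H\"older and the Sobolev embeddings $H^{1+\delta}\hookrightarrow L^\infty\cap W^{1,2/(1-\delta)}$ and $H^{1}\hookrightarrow L^{2/\delta}$, that the multiplication operator $v\mapsto uv$ maps $H^{1+\delta}\to H^{1}$ and $H^{\delta}\to L^{2}$, each with norm $\lesssim\|u\|_{H^{1}}$. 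Complex interpolation halfway between these endpoints yields $H^{(1+2\delta)/2}=H^{s}\to H^{1/2}$, which is the desired bound. This is short, uses only textbook embeddings, and the condition $s>1/2$ enters simply as $\delta>0$.

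Your paraproduct route is valid in principle and is the standard harmonic-analysis proof of the same bilinear estimate, but the sketch as written is not quite closed. In dimension two the membership $u\in H^{1}$ does \emph{not} give $\widehat u\in L^{1}$, so the step ``bounding the remaining convolution of $\langle\cdot\rangle^{1/2}\widehat u$ against an $L^{1}$ weight by $\|u\|_{H^{1}}$'' fails as stated; likewise the symmetric piece needs $\widehat\varphi\in L^{1}$, which would require $s>1$. You notice this (the ``once $s>1$\ldots here instead'' aside), but the repair you indicate is vague. A correct Fourier-side proof at the sharp threshold $s>1/2$ really does require a Littlewood--Paley decomposition and the paraproduct estimates $\|T_{f}g\|_{H^{r}}\lesssim\|f\|_{B^{-\rho}_{\infty,\infty}}\|g\|_{H^{r+\rho}}$ together with the embeddings $H^{1}(\mathbb R^{2})\hookrightarrow B^{0}_{\infty,\infty}$ and $H^{s}(\mathbb R^{2})\hookrightarrow B^{s-1}_{\infty,\infty}$, rather than a two-region split with Young's inequality. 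So your approach can be made to work, but it is longer and more delicate than the paper's interpolation argument, which sidesteps the borderline $H^{1}(\mathbb R^{2})\not\hookrightarrow L^{\infty}$ issue entirely.
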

\begin{proof} The proof is in Appendix \ref{applem1}
\end{proof}

\subsection{Poisson  equation}
Let $\Omega\subset \mathbb R^2$ be a bounded domain and $r\in
(1,\infty)$. Let   $f\in L^r(\mathbb R^2)$ be an arbitrary function
 such that $\text{spt~}f\subset\Omega$. Then, see \cite{evans},
 the Poisson equation
\begin{equation}\label{elliptic.1}
\Delta u=f\quad\text{in~} \mathbb{R}^2,
\end{equation}
 has a solution with the properties:
This solution is analytic outside of $\Omega$, and satisfies
\begin{gather*}
\limsup_{|x|\to\infty}\,(\log|x|)^{-1}|u(x)|<\infty,\quad
\|u\|_{W^{1,2}(B_R)}\leq c \|f\|_{L^{r}(\mathbb{R}^d)}.
\end{gather*}
Here  $B_R$ is the ball $\{ x\in\mathbb{R}^d: |x|<R\}$ of an
arbitrary radius $R<\infty$, and the constant $c$ depends only on
$R$ and $\Omega$. The relation $f\to u$ determines a linear operator
$\Delta^{-1}$. In this framework we can define  the linear operators
$$
 A_j=\partial_{x_j}\Delta^{-1}, \quad R_{j}=
\partial_{x_j}(-\Delta)^{-1/2}, \quad
j=1,2.
$$
The Riesz operator
 $R_{j}$ is a singular integral operator and
 by  the Zygmund-Calder\'on theorem
 it is bounded in any space $L^p(\mathbb R^d)$ with $1<p<\infty$.
 In particular we have
\begin{align*}
\|A_jf\|_{W^{1,r}(B_R)}&\leq
c(R,\Omega)\|f\|_{L^{r}(\mathbb{R}^2)}\text{~when~~}\text{spt}
f\subset \Omega,
\\
\|R_{j}f\|_{L^{p}(\mathbb{R}^2)}&\leq c(r) \|f\|_{L^{r}(\mathbb
R^2)}.
\end{align*}
Notice that these operators have integral representations. In
particular, we have
\begin{equation}\label{riesz1}
A_i f(x)=c\int_{\mathbb R^2}|x-y|^{-2} (x_i-y_i)f(y)\, dy.
\end{equation}

\section{Regularized problem}\label{regularized}

In order to regularize problem \eqref{stokes1} we use the artificial
pressure method and replace equations \eqref{stokes1} by regularized
equations
\begin{subequations}
  \label{navier1}
\begin{gather}
\label{navier2} \partial_t (\varrho \mathbf{u})+\div(\varrho
\vo\mathbf{u})+\nabla
p(\varrho)=\div\mathbb S(\mathbf u)+\varrho\mathbf f\quad\text{in~~}Q_T,\\
 \label{navier3}\partial_t\varrho+\div(\varrho \mathbf{u})=0\quad\text{in~~}Q_T,\\\label{navier4}
\mathbf u=0\quad\text{on~} \partial\Omega\times (0,T),\\
\label{navier5} \mathbf u(x,0)=\mathbf u_0(x),\quad
\varrho(x,0)=\varrho_0(x)\quad\text{in~} \Omega.
\end{gather}
Here, the artificial pressure function  is given by
\begin{equation}\label{navier6}
    p(\varrho)=\varrho+\varepsilon \varrho^\gamma, \quad \varepsilon\in (0,1], \quad \gamma\geq 6.
\end{equation}
\end{subequations}
The existence of weak renormalized solutions to problem
\eqref{navier1} was established in monographs \cite{FEIRBOOK} and
\cite{PLL}. The following proposition is a consequence of these
results.
\begin{proposition}\label{navierproposition}
Let domain $\Omega$, and functions $\mathbf u_0$, $\varrho_0$,
$\mathbf f$ satisfy Condition \ref{stokes6}. Then
 problem  \eqref{navier1}  has a
weak  solution $( \varrho, \mathbf u)$ with the following
properties:
\begin{itemize}

\item[{ (i)}]
The   functions $\varrho\geq 0$ and $\mathbf u$ satisfy the energy
inequality
\begin{equation}\label{navier7}
%\text{\rm  ess}\sup_{(0,T)}
\mathop{\rm ess\,sup}_{t\in(0,T)} \int_{\Omega}
\big\{\varrho|\mathbf u|^2+ \varrho \ln (1+\varrho)+\varepsilon
\varrho^\gamma\big\}(x,t)\, dx+ \int_{Q_T}|\nabla\mathbf{u}|^2
\,dxdt\leq c E.
\end{equation}
The constant $E$ is  as in Remark \ref{stokesremark}.
\item[{ (ii)}]The integral identity
\begin{multline}
\int_{Q_T}\big(\varrho \mathbf{u}
\cdot\partial_t\boldsymbol{\xi}\big)\, dxdt + \int_{Q_T}\big(\varrho
\mathbf u\otimes\mathbf u+p(\varrho)\,\mathbb I- \mathbb
S(\mathbf{u})
 \big):\nabla
\boldsymbol{\xi}\, dxdt+\\\int_{Q_T}\varrho\mathbf{f}\cdot
\boldsymbol{\xi}\, dxdt \label{navier8} +\int_\Omega\varrho_0(x)
\mathbf{u}_0(x)\cdot\boldsymbol{\xi}(x,0)\, dx=0
\end{multline}
holds  for all vector fields  $\boldsymbol{\xi}\in C^\infty(Q)$
satisfying
\begin{equation}\begin{split}\label{navier9}
 \boldsymbol{\xi}(x,T)=0\quad\text{in~} \Omega,\quad
 \boldsymbol{\xi}(x,t)=0\quad\text{on~} \partial\Omega\times (0,T).
\end{split} \end{equation}
\item[{ (iii)}]
The integral identity
\begin{multline}\label{navier10}
\int_{Q_T}\Big(\varphi(\varrho)\partial_t\psi+
\big(\varphi(\varrho)\mathbf u\big)
\cdot\nabla\psi-\psi\big(\varphi'(\varrho)\varrho-\varphi(\varrho)\big)\div\mathbf
u\Big)\,dxdt\\
+\int_{\Omega }( \psi\varphi(\varrho_0))(x,0)\, dx= 0
\end{multline}
holds for  all smooth  functions $\psi$, vanishing  in a
neighborhood of the top $\Omega\times \{t=T\}$, and for all
functions $\varphi\in  C^2[0,\infty)$ satisfying the growth
condition
\begin{equation}\label{navier11}
|\varphi(\varrho)|+|\varphi'(\varrho)\varrho|+|\varphi''(\varrho)\varrho^2|\leq
C(1+\varrho^2).
\end{equation}
\end{itemize}
\end{proposition}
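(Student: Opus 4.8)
The plan is to obtain Proposition \ref{navierproposition} as a direct consequence of the classical existence theory for the multidimensional isentropic compressible Navier--Stokes equations, in the form established by P.-L.~Lions \cite{PLL} and by Feireisl, Novotn\'y and Petzeltov\'a \cite{FNP} (see also \cite{FEIRBOOK}, where exactly this type of artificial--pressure approximation is constructed). The pressure law \eqref{navier6}, $p(\varrho)=\varrho+\varepsilon\varrho^\gamma$ with fixed $\gamma\geq6$, is smooth, strictly increasing, and behaves like $\varepsilon\varrho^\gamma$ at infinity, hence it is an admissible pressure for that theory; and we work in two space dimensions, where the threshold $\gamma>1$ of \cite{FNP} is met with a wide margin. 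First I would verify that Condition \ref{stokes6} supplies admissible data: $\varrho_0\in L^\infty(\Omega)\subset L^\gamma(\Omega)$ with $\varrho_0>c>0$, $\mathbf u_0\in W^{1,2}_0(\Omega)$, $\mathbf f\in L^\infty(Q_T)\subset L^2(Q_T)$, and the initial kinetic energy $\int_\Omega\varrho_0|\mathbf u_0|^2\,dx$ is finite since $\varrho_0\in L^\infty(\Omega)$ and $\mathbf u_0\in L^2(\Omega)$. Invoking the existence theorem then yields a weak renormalized solution $(\varrho,\mathbf u)$, $\varrho\geq0$, satisfying the momentum identity \eqref{navier8} for all test fields as in \eqref{navier9} and the renormalized continuity identity \eqref{navier10}. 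The growth restriction \eqref{navier11} on $\varphi$ is precisely what the DiPerna--Lions commutator argument requires, and it is compatible with the available regularity: since $\gamma\geq6$ one has $\varrho\in L^\infty(0,T;L^\gamma(\Omega))\subset L^\gamma(Q_T)$ (indeed $\varrho\in L^{\gamma+\theta}(Q_T)$ for some $\theta>0$ by the standard improved-integrability estimate), so $\varrho\in L^4(Q_T)$, and then $\varphi(\varrho)$, $\varphi(\varrho)\mathbf u$ and $\bigl(\varphi'(\varrho)\varrho-\varphi(\varrho)\bigr)\div\mathbf u$ all lie in $L^1(Q_T)$, so that \eqref{navier10} makes sense.

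The step that requires real care --- and the reason this proposition is recorded separately rather than merely cited --- is that the constant in \eqref{navier7} must be \emph{independent of the regularization parameter} $\varepsilon$, because later the limit $\varepsilon\to0$ is taken. The construction provides the standard energy inequality
\begin{equation*}
\mathop{\rm ess\,sup}_{t\in(0,T)}\int_\Omega\Bigl(\tfrac{1}{2}\varrho|\mathbf u|^2+P_\varepsilon(\varrho)\Bigr)(x,t)\,dx+\int_{Q_T}\mathbb S(\mathbf u):\nabla\mathbf u\,dxdt\leq\int_\Omega\Bigl(\tfrac{1}{2}\varrho_0|\mathbf u_0|^2+P_\varepsilon(\varrho_0)\Bigr)dx+\int_{Q_T}\varrho\,\mathbf f\cdot\mathbf u\,dxdt,
\end{equation*}
where the pressure potential is normalized by $\varrho P_\varepsilon'(\varrho)-P_\varepsilon(\varrho)=p(\varrho)$, so $P_\varepsilon(\varrho)=\varrho\log\varrho+\tfrac{\varepsilon}{\gamma-1}(\varrho^\gamma-\varrho)$. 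By $\nu_1>0$, $\nu_1+\nu_2\geq0$ and Korn's inequality the dissipation term bounds $\int_{Q_T}|\nabla\mathbf u|^2\,dxdt$ from below. The total mass is conserved, $\int_\Omega\varrho(x,t)\,dx=\int_\Omega\varrho_0\,dx\leq\|\varrho_0\|_{L^\infty(\Omega)}|\Omega|$, and one has the elementary pointwise bounds $\varrho\log(1+\varrho)\leq P_\varepsilon(\varrho)+c(1+\varrho)$ and $\varepsilon\varrho^\gamma\leq(\gamma-1)\bigl(P_\varepsilon(\varrho)+c(1+\varrho)\bigr)$ with $c$ absolute, so the left-hand side of \eqref{navier7} is controlled by the left-hand side of the energy inequality plus lower-order mass terms. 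On the right-hand side the initial potential is estimated, using $\varepsilon\leq1$, by $\int_\Omega P_\varepsilon(\varrho_0)\,dx\leq c(\|\varrho_0\|_{L^\infty(\Omega)},|\Omega|,\gamma)$, uniformly in $\varepsilon$; and the forcing is handled by $\int_\Omega\varrho\,\mathbf f\cdot\mathbf u\,dx\leq\|\mathbf f\|_{L^\infty(Q_T)}\bigl(\int_\Omega\varrho|\mathbf u|^2\,dx\bigr)^{1/2}\bigl(\int_\Omega\varrho\,dx\bigr)^{1/2}$ and absorbed through Gr\"onwall's lemma. This yields \eqref{navier7} with a constant of the type described in Remark \ref{stokesremark}, uniform in $\varepsilon\in(0,1]$.

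I do not anticipate any serious obstacle. The deep analytic content of the isentropic theory --- the weak sequential stability of the density, through the effective viscous flux identity and Feireisl's oscillation-defect estimate --- is already packaged inside the existence theorems being quoted, and is not reproved here. What remains is bookkeeping: checking that every constant is $\varepsilon$-independent, and verifying the integrability thresholds that make \eqref{navier8}, \eqref{navier10} and \eqref{navier7} meaningful, which is comfortable precisely because $\gamma\geq6$ gives abundant density integrability. (If in addition one wanted the constant in \eqref{navier7} independent of $\gamma$ one would need $\|\varrho_0\|_{L^\infty(\Omega)}\leq1$, but $\gamma$ is a fixed structural parameter, so this is immaterial.)
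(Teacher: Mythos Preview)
Your proposal is correct and takes the same approach as the paper: the paper does not give a proof of this proposition at all, stating only that it ``is a consequence of'' the existence results in \cite{FEIRBOOK} and \cite{PLL}. Your write-up supplies exactly the bookkeeping the paper omits---verifying that Condition~\ref{stokes6} gives admissible data for that theory and, more importantly, that the constant in the energy inequality \eqref{navier7} is uniform in $\varepsilon\in(0,1]$---so if anything you are being more careful than the authors.
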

\begin{remark}\label{navier17}
 Further we will  assume that
$( \varrho, \mathbf u)$ and $( \varrho_0, \mathbf u_0)$ are extended
by $0$ to  the layer
\begin{equation}\label{glok}
    \Pi\,=\,\mathbb R^2\times (0,T).
\end{equation}
\end{remark}
The following consequences of Proposition \ref{navierproposition}
will be used throughout the paper.
\begin{corollary}\label{navier12}  Assume that $(\varrho, \mathbf u)$ meets all requirements
of Proposition \ref{navierproposition}. Then there is a constant
$c(E, \varepsilon)$, depending only on $E$, $\beta$, $\alpha$,
$\gamma$, and $\varepsilon$, such that
\begin{align*}
&\|\varrho \mathbf{u}\|_{L^\infty(0,T;
L^{2\gamma/(\gamma+1)}(\Omega))}\leq c(E, \varepsilon),
\\
&\|\varrho \mathbf{u}\|_{L^2(0,T; L^\beta(\Omega))}\leq c(E,
\varepsilon)\quad\text{for all~}\, \beta\in [1,
\infty]\quad\text{with}\quad \beta<\gamma,
\\
&\|\varrho \mathbf{u}\|_{L^\alpha(0,T; L^2(\Omega))}\leq c({E},
\varepsilon) \quad\text{for all~}\, \alpha\in[ 1, 2\gamma-2).
\end{align*}
\end{corollary}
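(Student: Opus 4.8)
The plan is to read off everything from the energy inequality \eqref{navier7}. It provides four ingredients: the kinetic energy bound $\|\varrho^{1/2}\mathbf u\|_{L^\infty(0,T;L^2(\Omega))}\le (cE)^{1/2}$; the artificial‑pressure bound $\|\varrho\|_{L^\infty(0,T;L^\gamma(\Omega))}\le (cE/\varepsilon)^{1/\gamma}$; the dissipation bound, which with the Poincar\'e inequality gives $\|\mathbf u\|_{L^2(0,T;W^{1,2}_0(\Omega))}\le cE$; and, via the two–dimensional Sobolev embedding $W^{1,2}(\Omega)\hookrightarrow L^q(\Omega)$, valid for every finite $q$ since $\Omega$ is bounded and smooth, the family $\|\mathbf u\|_{L^2(0,T;L^q(\Omega))}\le c(q)\,E$ for all $q<\infty$. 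Every claimed inequality is a H\"older/interpolation consequence of these.

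For the first estimate write $\varrho\mathbf u=\varrho^{1/2}\,(\varrho^{1/2}\mathbf u)$ and apply H\"older in $x$ with exponents $2\gamma$ and $2$:
\[
\|\varrho\mathbf u(t)\|_{L^{2\gamma/(\gamma+1)}(\Omega)}\le \|\varrho(t)\|_{L^\gamma(\Omega)}^{1/2}\,\|\varrho^{1/2}\mathbf u(t)\|_{L^2(\Omega)}\le (cE/\varepsilon)^{1/(2\gamma)}(cE)^{1/2},
\]
and take $\operatorname{ess\,sup}$ over $t$. For the second estimate fix $\beta\in[1,\gamma)$ and set $b=\gamma\beta/(\gamma-\beta)<\infty$, so that $1/\beta=1/\gamma+1/b$; H\"older in $x$ gives $\|\varrho\mathbf u(t)\|_{L^\beta(\Omega)}\le\|\varrho(t)\|_{L^\gamma(\Omega)}\|\mathbf u(t)\|_{L^b(\Omega)}$, and taking the $L^2$ norm in $t$ and using the pressure bound and the Sobolev bound with $q=b$ yields $\|\varrho\mathbf u\|_{L^2(0,T;L^\beta(\Omega))}\le\|\varrho\|_{L^\infty(0,T;L^\gamma(\Omega))}\|\mathbf u\|_{L^2(0,T;L^b(\Omega))}\le c(E,\varepsilon,\beta)$. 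The restriction $\beta<\gamma$ is exactly what keeps $b$ finite, since $W^{1,2}(\Omega)\not\hookrightarrow L^\infty(\Omega)$ in dimension two.

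For the third estimate one must trade spatial integrability for temporal integrability, and the decisive input is the kinetic‑energy bound, which already sits in $L^\infty$ in time. Interpolate the mixed Lebesgue norms between the bound of item one, $\varrho\mathbf u\in L^\infty(0,T;L^{2\gamma/(\gamma+1)}(\Omega))$, and the bound of item two with $\beta=r$ for a fixed $r\in(2,\gamma)$, i.e. $\varrho\mathbf u\in L^2(0,T;L^r(\Omega))$: for $\theta\in(0,1)$, H\"older in $x$ for fixed $t$ followed by H\"older in $t$ over the finite interval $(0,T)$ gives
\[
\|f\|_{L^{2/\theta}(0,T;L^{b_\theta}(\Omega))}\le \|f\|_{L^\infty(0,T;L^{2\gamma/(\gamma+1)}(\Omega))}^{1-\theta}\,\|f\|_{L^2(0,T;L^r(\Omega))}^{\theta},\qquad \tfrac1{b_\theta}=(1-\theta)\tfrac{\gamma+1}{2\gamma}+\tfrac{\theta}{r}.
\]
Choosing $\theta=\theta(r)$ so that $b_\theta=2$, one finds $\theta(r)=\big(\tfrac{\gamma+1}{2\gamma}-\tfrac12\big)\big/\big(\tfrac{\gamma+1}{2\gamma}-\tfrac1r\big)$, hence the exponent $\alpha=2/\theta(r)$ increases from $2$ to $2\gamma-2$ as $r$ runs from $2$ up to $\gamma$; so every $\alpha\in[2,2\gamma-2)$ is attained for a suitable $r<\gamma$, and $\alpha\in[1,2)$ follows from H\"older in $t$. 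This gives $\|\varrho\mathbf u\|_{L^\alpha(0,T;L^2(\Omega))}\le c(E,\varepsilon,\gamma,\alpha)$.

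The computations are routine; the only point needing care is the last one, where one verifies that the mixed‑norm interpolation indeed yields time exponents ranging up to $2\gamma-2$, and observes that the endpoint $\alpha=2\gamma-2$ must be excluded because reaching it would force $r=\gamma$, i.e. the borderline two‑dimensional Sobolev embedding into $L^\infty$; correspondingly the constant blows up as $\alpha\uparrow 2\gamma-2$ (and as $\beta\uparrow\gamma$), which is consistent with the form of the claim.
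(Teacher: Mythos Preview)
Your argument is correct and is essentially the standard route: extract from the energy inequality \eqref{navier7} the bounds $\varrho\in L^\infty(0,T;L^\gamma)$, $\varrho|\mathbf u|^2\in L^\infty(0,T;L^1)$, $\mathbf u\in L^2(0,T;W^{1,2}_0)$, and combine them via H\"older and mixed-norm interpolation, using the two-dimensional embedding $W^{1,2}_0\hookrightarrow L^q$ for all finite $q$. The paper does not spell any of this out; it simply records the energy bounds \eqref{einequality} and then invokes Corollary~4.2.2 of \cite{PlotnSoc}, so your write-up is a self-contained version of what the paper delegates to that reference.
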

\begin{proof}
It follows from \eqref{navier7} that
\begin{equation}\label{einequality}\begin{split}
\|\varrho\|_{L^\infty(0,T; L^\gamma(\Omega))}+
\|\varrho|\mathbf{u}|^2\|_{L^\infty(0,T;L^1(\Omega))} \leq c(E, \varepsilon),\\
\|\mathbf{u}\|_{L^2(0,T; W^{1,2}_0(\Omega))}\leq c(E, \varepsilon).
\end{split}\end{equation}
Hence $( \varrho, \mathbf u)$ are bounded energy functions and the
corollary is a particular case of Corollary 4.2.2  in
\cite{PlotnSoc}.
\end{proof}
\begin{corollary}\label{navier13}
Assume that $(\varrho, \mathbf u)$ meets all requirements of
Proposition \ref{navierproposition}. Then there is a constant $c(E,
\varepsilon)$, depending only on $E$,  $\gamma$, and $\varepsilon$,
such that
\begin{align}\label{navier14}
\|\varrho|\mathbf{u}|^2\|_{L^2(0,T; L^\tau(\Omega))}&\leq
c(E,\epsilon)\quad\text{for all~}\, \tau\in \big[1,
2\gamma/(\gamma+1)\big),
\\
\label{navier15} \|\varrho|\mathbf{u}|^2\|_{L^1(0,T;
L^\tau(\Omega))}&\leq c(E, \varepsilon)\quad\text{for all~}\,
\tau\in \big[1, \gamma\big).
\end{align}
\end{corollary}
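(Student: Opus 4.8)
The plan is to estimate $\|\varrho|\mathbf{u}|^2\|_{L^p(0,T;L^\tau(\Omega))}$ by splitting the quantity $\varrho|\mathbf{u}|^2$ as $\varrho^{1/2}\cdot(\varrho^{1/2}|\mathbf{u}|^2)$ or, more efficiently, by interpolating between the two natural a priori bounds already recorded: the uniform-in-time bound $\varrho|\mathbf{u}|^2\in L^\infty(0,T;L^1(\Omega))$ from \eqref{einequality}, and a space-time bound coming from the fact that $\mathbf{u}\in L^2(0,T;W^{1,2}_0(\Omega))$ together with $\varrho\in L^\infty(0,T;L^\gamma(\Omega))$. First I would use the two-dimensional Gagliardo--Nirenberg--Sobolev embedding $W^{1,2}_0(\Omega)\hookrightarrow L^q(\Omega)$ for every finite $q$, with the quantitative form $\|\mathbf u\|_{L^q}\le c(q)\|\mathbf u\|_{L^2}^{2/q}\|\nabla\mathbf u\|_{L^2}^{1-2/q}$, so that $\||\mathbf u|^2\|_{L^{q/2}}\le c(q)\|\mathbf u\|_{L^2}^{4/q}\|\nabla\mathbf u\|_{L^2}^{2-4/q}$. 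Pairing this with $\varrho(\cdot,t)\in L^\gamma(\Omega)$ via Hölder's inequality with exponents $\gamma$ and $\gamma/(\gamma-1)$ forces $|\mathbf u|^2\in L^{\gamma/(\gamma-1)}$, i.e. $q/2=\gamma/(\gamma-1)$, giving $\varrho|\mathbf u|^2(\cdot,t)\in L^\tau(\Omega)$ for any $\tau<\gamma/(\gamma-1)\cdot$... — but this is not yet sharp; the better route is to interpolate the Lebesgue exponent in $x$ between $\tau=1$ (where we have $L^\infty$ in $t$) and the largest spatial exponent attainable, and track the resulting time-integrability.

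Concretely, for \eqref{navier14} I would write, for $\tau\in[1,2\gamma/(\gamma+1))$, the interpolation $\|\varrho|\mathbf u|^2\|_{L^\tau}\le \|\varrho|\mathbf u|^2\|_{L^1}^{1-\theta}\|\varrho|\mathbf u|^2\|_{L^{\sigma}}^{\theta}$ with a suitable larger $\sigma$ and $\theta$ determined by $1/\tau=(1-\theta)+\theta/\sigma$. The $L^1$ factor is bounded in $L^\infty(0,T)$; for the $L^\sigma$ factor I bound $\varrho|\mathbf u|^2$ in $x$ by Hölder as $\|\varrho\|_{L^\gamma}\||\mathbf u|^2\|_{L^{\gamma\sigma/(\gamma-\sigma)}}$ and then use the Sobolev inequality above to convert $\||\mathbf u|^2\|$ into a product of $\|\mathbf u\|_{L^2}$ and $\|\nabla\mathbf u\|_{L^2}$ powers; the exponent on $\|\nabla\mathbf u\|_{L^2}$ should come out to be strictly less than $2$ as long as $\tau<2\gamma/(\gamma+1)$, which is exactly what makes the time integral $\int_0^T(\cdots)^2\,dt$ finite after one more application of Hölder in $t$ (using $\mathbf u\in L^\infty(0,T;L^2)$ from the kinetic energy bound and $\nabla\mathbf u\in L^2(Q_T)$). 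Tracking the arithmetic of the exponents $\theta,\sigma$ in terms of $\tau$ and $\gamma$ is the only computational content, and it is routine. The bound \eqref{navier15} is the same argument but cheaper: one only needs finiteness of $\int_0^T(\cdots)\,dt$, so the power of $\|\nabla\mathbf u\|_{L^2}$ is allowed to be up to $2$, which relaxes the constraint on $\tau$ all the way to $\tau<\gamma$.

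Alternatively — and this is probably the cleanest presentation — both inequalities follow by citing Corollary 4.2.2 of \cite{PlotnSoc} in the same way Corollary \ref{navier12} does, since $(\varrho,\mathbf u)$ is a bounded-energy pair by \eqref{einequality}; the statements \eqref{navier14}--\eqref{navier15} are precisely the $\varrho|\mathbf u|^2$ interpolation estimates that accompany the $\varrho\mathbf u$ estimates of that corollary. The main (very mild) obstacle is simply bookkeeping: making sure the endpoint exponents $2\gamma/(\gamma+1)$ and $\gamma$ are the genuine thresholds, i.e. that the Hölder/Sobolev chain degenerates exactly there, and verifying that the constants depend only on $E$, $\gamma$, $\varepsilon$ and not on $\varepsilon^{-1}$ in a way that would be harmful later — but since $\varepsilon\le 1$ and the dependence enters only through $c(E,\varepsilon)$ in \eqref{einequality}, this is harmless. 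I would therefore structure the proof as a short reduction to \eqref{einequality} plus a one-paragraph interpolation lemma, or a one-line appeal to \cite{PlotnSoc}, whichever the surrounding text prefers.
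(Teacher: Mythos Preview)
Your proposal is correct, and the paper's own proof is precisely your ``alternative'' route: a one-line appeal to \cite{PlotnSoc} after invoking \eqref{einequality}. The only slip is the reference number --- the paper cites Corollary~4.2.3 (not 4.2.2) of \cite{PlotnSoc} for this statement, reserving 4.2.2 for Corollary~\ref{navier12}; your explicit interpolation sketch is essentially what that cited result packages.
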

\begin{proof}
In view of \eqref{einequality} the corollary is a particular case of
Corollary  4.2.3 in \cite{PlotnSoc}.
\end{proof}

\section{Radon transform}\label{radon}
In this section we estimate the Radon transform of  solutions to
regularized equations \eqref{navier1}. The corresponding result is
given by the following theorem. Fix an arbitrary function $\zeta$
with the properties
\begin{equation}\label{radon1}
    \zeta\in C^\infty_0(\mathbb R^2), \quad \text{spt~} \zeta \Subset \Omega, \quad \zeta\geq 0.
\end{equation}
\begin{theorem}\label{radontheorem}
Assume that a renormalized solution to problem \eqref{navier1} meets
all requirements of Proposition \ref{navierproposition}.
Furthermore, assume that $\mathbf u$ and $\varrho$ are extended by
$0$ to the layer $\Pi$. Then for every unit vector
$\boldsymbol{\omega}\in \mathbb R^2$,
\begin{equation}\label{radon2}
    \int_0^T\int_{-\infty}^\infty \Big\{\int_{\boldsymbol{\omega}\cdot x=\tau}\zeta(x)\varrho(x,t)
    \, dl\Big\}^2\, d\tau dt\leq c(\zeta) E,
\end{equation}
where $c(\zeta)$ depends only on $\zeta$, and $E$ is specified by
Remark \ref{stokesremark}. Notice that $c(\zeta)$ and $E$ are
independent of $\boldsymbol\omega$ and $\varepsilon$.
\end{theorem}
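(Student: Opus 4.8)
Fix the unit vector $\boldsymbol\omega$ and write $\tau=\boldsymbol\omega\cdot x$. The plan is to feed into the weak momentum identity \eqref{navier8} the test field $\boldsymbol\xi=\zeta(x)\,V(\tau,t)\,\boldsymbol\omega$, where $V(\tau,t)=\int_{-\infty}^{\tau}\Phi(s,t)\,ds$ is a one--dimensional primitive of the Radon transform $\Phi(\tau,t)=\int_{\boldsymbol\omega\cdot x=\tau}\zeta\varrho\,dl$; the point is that the pressure term will then produce the coercive quantity $\|\Phi\|_{L^2((0,T)\times\mathbb R)}^2$. I would begin by integrating the continuity equation for $\zeta\varrho$ over the lines $\{\boldsymbol\omega\cdot x=\tau\}$, which gives $\partial_t\Phi+\partial_\tau\Psi=g$ with $\Psi(\tau,t)=\int_{\boldsymbol\omega\cdot x=\tau}\zeta\varrho\,(\boldsymbol\omega\cdot\mathbf u)\,dl$ and $g(\tau,t)=\int_{\boldsymbol\omega\cdot x=\tau}\varrho\mathbf u\cdot\nabla\zeta\,dl$; consequently $\nabla V=\Phi\,\boldsymbol\omega$ and $\partial_tV=G-\Psi$, where $G(\tau,t)=\int_{-\infty}^{\tau}g(s,t)\,ds$. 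Since $\Phi\ge0$ and the total mass $\int_\Omega\varrho\,dx=\int_\Omega\varrho_0\,dx$ is conserved, $\|V\|_{L^\infty(\Pi)}\le c(\zeta)E$; and since $\int_\Omega\varrho|\mathbf u|\,dx\le(\int\varrho\,dx)^{1/2}(\int\varrho|\mathbf u|^2dx)^{1/2}\le cE$ uniformly in $t$ and in $\varepsilon$, also $\|G\|_{L^\infty(\Pi)}\le c(\zeta)E$. (Because $V$ is only of Sobolev class, $\boldsymbol\xi$ must be regularised, e.g. by mollifying $V$ in $(\tau,t)$; the renormalised identity \eqref{navier10} legitimises the operations on the continuity equation, and the non--vanishing of $\boldsymbol\xi$ at $t=T$ only adds a term $-\int_\Omega(\varrho\mathbf u)(x,T)\cdot\boldsymbol\xi(x,T)\,dx$ that is controlled by the weak continuity of the momentum. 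I regard this as routine and would carry it out in the limit.)

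Now $\nabla\boldsymbol\xi=\nabla(\zeta V)\otimes\boldsymbol\omega$ and $\nabla(\zeta V)=V\,\nabla\zeta+\zeta\Phi\,\boldsymbol\omega$, so each term of \eqref{navier8} splits into a part carrying the factor $\zeta\Phi$ and a part carrying $V$ or $G$. For the $\zeta\Phi$--parts I would use repeatedly the co--area identity $\int_{\mathbb R^2}F(x)\,h(\boldsymbol\omega\cdot x,t)\,dx=\int_{-\infty}^\infty h(\tau,t)\big(\int_{\boldsymbol\omega\cdot x=\tau}F\,dl\big)d\tau$ to obtain: from the pressure, $\int_{Q_T}p(\varrho)\zeta\Phi=A+\varepsilon B$ with $A:=\int_0^T\!\int_{-\infty}^\infty\Phi^2\,d\tau\,dt$ and $B:=\int_{Q_T}\varrho^\gamma\zeta\Phi\ge0$; from the convective term, $I:=\int_{Q_T}\varrho\,(\boldsymbol\omega\cdot\mathbf u)^2\,\zeta\Phi\ge0$; from the viscous term, $-\mathcal V$ with $\mathcal V:=\int_{Q_T}\big(\mathbb S(\mathbf u)\boldsymbol\omega\cdot\boldsymbol\omega\big)\zeta\Phi$; and, since $\partial_t\boldsymbol\xi=\zeta(G-\Psi)\boldsymbol\omega$, the $\Psi$--part of the time term is $-\int_{Q_T}\varrho\,(\boldsymbol\omega\cdot\mathbf u)\,\zeta\Psi=-\int_0^T\!\int_{-\infty}^\infty\Psi^2\,d\tau\,dt$, because $\Psi$ is constant along the lines and the inner line integral is exactly $\Psi$. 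Writing $\mathcal R'$ for the sum of all remaining ($V$-- and $G$--carrying) terms together with the initial and $t=T$ terms, \eqref{navier8} becomes
\[
 A+\varepsilon B+I-\mathcal V-\int_0^T\!\!\int_{-\infty}^\infty\Psi^2\,d\tau\,dt+\mathcal R'=0 .
\]

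The crux --- and the step I expect to be the real content --- is Cauchy--Schwarz along each line $\{\boldsymbol\omega\cdot x=\tau\}$,
\[
 \Psi(\tau,t)^2\le\Big(\int_{\boldsymbol\omega\cdot x=\tau}\zeta\varrho\,dl\Big)\Big(\int_{\boldsymbol\omega\cdot x=\tau}\zeta\varrho\,(\boldsymbol\omega\cdot\mathbf u)^2\,dl\Big)=\Phi(\tau,t)\int_{\boldsymbol\omega\cdot x=\tau}\zeta\varrho\,(\boldsymbol\omega\cdot\mathbf u)^2\,dl,
\]
which on integration in $(\tau,t)$ gives $\int_0^T\!\int_{-\infty}^\infty\Psi^2\le I$. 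Thus the problematic convective term $I$ is exactly cancelled by the $\Psi^2$--term produced by the time derivative, and the displayed identity yields $A+\varepsilon B\le\mathcal V-\mathcal R'$. It then remains to bound the two leftovers using only the $\varepsilon$--independent consequences of the energy inequality \eqref{navier7}: by elementary Hölder estimates (with $\|V\|_\infty,\|G\|_\infty\le c(\zeta)E$, mass conservation, $\|\varrho|\mathbf u|^2\|_{L^\infty(0,T;L^1)}\le cE$, $\varepsilon\|\varrho^\gamma\|_{L^\infty(0,T;L^1)}\le cE$, $\|\nabla\mathbf u\|_{L^2(Q_T)}\le cE$, $\|\mathbf f\|_{L^\infty(Q_T)}$ and the bounds of Condition \ref{stokes6}) one gets $|\mathcal R'|\le c(\zeta)E$; and since $|\mathbb S(\mathbf u)\boldsymbol\omega\cdot\boldsymbol\omega|\le c|\nabla\mathbf u|$, Cauchy--Schwarz along the lines together with $\|\zeta\|_{L^2(\{\boldsymbol\omega\cdot x=\tau\})}\le c(\zeta)$ (uniform in $\boldsymbol\omega$) gives $|\mathcal V|\le c(\zeta)\big(\int_0^T\!\int\Phi^2\big)^{1/2}\|\nabla\mathbf u\|_{L^2(Q_T)}\le\frac12A+c(\zeta)E$ by Young's inequality. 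Absorbing $\frac12A$ into the left side leaves $\frac12A+\varepsilon B\le c(\zeta)E$, i.e. \eqref{radon2}, with a constant depending only on $\zeta$ and manifestly independent of $\varepsilon$ and of $\boldsymbol\omega$. Apart from this cancellation, the only genuine technical point is the regularisation making $\zeta V\boldsymbol\omega$ an admissible test field while keeping every constant uniform in $\varepsilon$ and $\boldsymbol\omega$.
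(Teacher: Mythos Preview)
Your proposal is correct and follows essentially the same route as the paper: test the momentum equation with $\zeta$ times a primitive of the Radon transform in the direction $\boldsymbol\omega$, so that the pressure produces $\int\Phi^2$, and then use Cauchy--Schwarz along the lines $\{\boldsymbol\omega\cdot x=\tau\}$ to show that the convective contribution dominates the bad $-\int\Psi^2$ coming from $\partial_t\boldsymbol\xi$; the viscous term is handled exactly as you do, by $|\mathcal V|\le \tfrac12A+c(\zeta)E$. The only point worth flagging is the regularisation you dismiss as routine: the paper mollifies $\zeta\varrho$ in the \emph{spatial} variables (a $2$D mollifier) before forming the line integral, so that the pointwise inequality $[fg]_h^2\le[f^2]_h[g^2]_h$ yields the cancellation $\Upsilon_h\Psi-\upsilon_h^2\ge0$ already at the regularised level, whereas mollifying $V$ in $(\tau,t)$ as you suggest does not give this comparison directly---you would end up comparing $\int[\Psi]_h\Psi$ with $\int[\Phi]_h\cdot(\text{Radon of }\zeta\varrho(\boldsymbol\omega\!\cdot\!\mathbf u)^2)$, which is not an immediate Cauchy--Schwarz; this is easily repaired by adopting the paper's mollification, but it is the one place where the ``routine'' step has real content.
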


Since the Navier- Stokes equations are invariant with respect to
rotations, it suffices to prove  \eqref{radon2} for $\boldsymbol
\omega=(1,0)$, i.e.,
 to prove the inequality
\begin{equation}\label{radon3}
    \int_0^T\int_{\mathbb R} \Big\{\int_{\mathbb R}\zeta(x)\varrho(x,t)\, dx_2
\Big\}^2\, dx_1 dt\leq c(\zeta) E.
\end{equation}
We split the proof of \eqref{radon3} into a sequence of lemmas.
\begin{lemma}\label{radonlemma1}
Let all hypotheses of Theorem \ref{radontheorem} be satisfied. Then
for every function $\varphi\in C^\infty(Q_T)$ vanishing in a
neighborhood of $\partial\Omega\times (0,T)$,
\begin{multline}
\int_{Q_T}\Big(\varrho u_1 \cdot\partial_t\varphi + \big(\,\varrho
u_1u_i - \mathbb S(\mathbf{u})_{i1}
 \,\big)\frac{\partial\varphi}{\partial x_i}+p\frac{\partial\varphi}{\partial x_1}
 \Big)\, dxdt+\\\int_{Q_T}\varrho f_1\,\varphi
\, dxdt \label{radon4} \leq c(\zeta)E\|\varphi\|_{L^\infty(Q_T)}.
\end{multline}
\end{lemma}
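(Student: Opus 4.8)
The plan is to read the left-hand side of \eqref{radon4} as the weak momentum balance \eqref{navier8} tested against the vector field $\boldsymbol\xi=(\varphi,0)$. The only obstruction to substituting this $\boldsymbol\xi$ directly is that $\varphi$ need not vanish at the top $t=T$, whereas \eqref{navier9} requires $\boldsymbol\xi(\cdot,T)=0$; I would remove it with a temporal cut-off. Fix a nondecreasing $\theta\in C^\infty(\mathbb R)$ with $0\le\theta\le1$, $\theta\equiv0$ on $(-\infty,1/4]$ and $\theta\equiv1$ on $[3/4,\infty)$, and for $\delta\in(0,T)$ set $\eta_\delta(t)=\theta\big((T-t)/\delta\big)$. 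Then $\eta_\delta\equiv1$ on $[0,T-3\delta/4]$, $\eta_\delta$ vanishes near $t=T$, $0\le\eta_\delta\le1$, and $\int_0^T|\eta_\delta'(t)|\,dt=1$. The field $\boldsymbol\xi_\delta=(\varphi\,\eta_\delta,0)$ is admissible in \eqref{navier8}: it is smooth, it vanishes in a neighbourhood of $\partial\Omega\times(0,T)$ because $\varphi$ does, and $\boldsymbol\xi_\delta(\cdot,T)=0$. Using the symmetry of $\mathbb S(\mathbf u)$ and $\mathbb I:\nabla\boldsymbol\xi_\delta=\eta_\delta\,\partial_{x_1}\varphi$, inserting $\boldsymbol\xi_\delta$ into \eqref{navier8} gives
\begin{multline*}
\int_{Q_T}\Big(\varrho u_1\partial_t\varphi+(\varrho u_1u_i-\mathbb S(\mathbf u)_{i1})\partial_{x_i}\varphi+p\,\partial_{x_1}\varphi+\varrho f_1\varphi\Big)\eta_\delta\,dxdt\\
+\int_{Q_T}\varrho u_1\,\varphi\,\eta_\delta'\,dxdt+\int_\Omega\varrho_0u_{0,1}\,\varphi(x,0)\,dx=0 .
\end{multline*}

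Next I would pass to the limit $\delta\to0$. By the energy inequality \eqref{navier7} one has $\varrho u_1,\ \varrho u_1u_i,\ p=\varrho+\varepsilon\varrho^\gamma\in L^\infty(0,T;L^1(\Omega))$ and $\nabla\mathbf u\in L^2(Q_T)$, so the whole integrand multiplying $\eta_\delta$ lies in $L^1(Q_T)$; since $\eta_\delta\to1$ a.e.\ with $0\le\eta_\delta\le1$, dominated convergence sends the first integral to the left-hand side of \eqref{radon4}. The integral carrying $\eta_\delta'$ is the one place where the top of the cylinder intervenes, and it is controlled by a bound uniform in $\delta$ rather than by a limit: since
$$\|\varrho u_1\|_{L^\infty(0,T;L^1(\Omega))}\le\|\varrho\|_{L^\infty(0,T;L^1(\Omega))}^{1/2}\,\big\|\varrho|\mathbf u|^2\big\|_{L^\infty(0,T;L^1(\Omega))}^{1/2}\le cE$$
by \eqref{navier7}, one estimates directly
$$\Big|\int_{Q_T}\varrho u_1\,\varphi\,\eta_\delta'\,dxdt\Big|\le\|\varphi\|_{L^\infty(Q_T)}\,\|\varrho u_1\|_{L^\infty(0,T;L^1(\Omega))}\int_0^T|\eta_\delta'(t)|\,dt\le cE\,\|\varphi\|_{L^\infty(Q_T)} .$$
Finally the initial-data term is bounded by $\|\varrho_0\|_{L^2(\Omega)}\|\mathbf u_0\|_{L^2(\Omega)}\,\|\varphi\|_{L^\infty(Q_T)}\le cE\,\|\varphi\|_{L^\infty(Q_T)}$. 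Since the $\eta_\delta'$–integral and the initial-data term are both bounded by $cE\,\|\varphi\|_{L^\infty(Q_T)}$ uniformly in $\delta$, the displayed identity forces the first integral to obey the same bound; letting $\delta\to0$ yields \eqref{radon4}. The constant so obtained is in fact independent of $\zeta$ and of $\varepsilon$, which is stronger than required.

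I do not expect a genuine obstacle here: the argument is one exact identity plus two elementary estimates. The single point that needs attention is the passage $\delta\to0$ in the $\eta_\delta'$–integral — one must not attempt to identify its limit with a trace term $-\int_\Omega(\varrho u_1)(\cdot,T)\,\varphi(\cdot,T)\,dx$, since a pointwise value of $\varrho\mathbf u$ at $t=T$ is not among the properties guaranteed by Proposition \ref{navierproposition}; the uniform bound supplied by $\varrho|\mathbf u|\in L^\infty(0,T;L^1(\Omega))$ is both available and sufficient.
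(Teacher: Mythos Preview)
Your argument is correct and follows essentially the same route as the paper: multiply $\varphi$ by a temporal cut-off vanishing near $t=T$, insert $(\eta\varphi,0)$ into the weak momentum identity \eqref{navier8}, bound the resulting top-of-cylinder term and the initial-data term by $cE\|\varphi\|_{L^\infty}$ via $\varrho\mathbf u\in L^\infty(0,T;L^1(\Omega))$, and then pass to the limit. The only cosmetic difference is that the paper uses the piecewise-linear cut-off $\eta_h(t)=\min\{1,(T-t)/h\}$, whereas you use a smooth one with $\int|\eta_\delta'|=1$; your choice is arguably cleaner since it keeps the test function genuinely in $C^\infty(Q_T)$, and your remark that the constant is in fact independent of $\zeta$ and $\varepsilon$ is correct.
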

\begin{proof} Set
\begin{equation}\label{etah}
\eta_h(t)=1\text{~~for~~} t\leq T-h, \quad
\eta=\frac{1}{h}(T-t)\text{~~for~~} t\in[T-h,T].
\end{equation}
Substituting $\boldsymbol \xi=(\eta_h\varphi,0)$ into
\eqref{navier8} we arrive at the identity
\begin{multline}
\int_{Q_{T}}\eta_h\varrho u_1 \cdot\partial_t\varphi \,dxdt+
\int_{Q_{T}}\eta_h\Big(\big(\,\varrho u_1u_i - \mathbb
S(\mathbf{u})_{i1}
 \,\big)\frac{\partial\varphi}{\partial x_i}+
 p\frac{\partial\varphi}{\partial x_1}\Big)\, dxdt+\\\int_{Q_T}\eta_h\varrho f_1
 \varphi\,dxdt=
 \frac{1}{h}\int_{T-h}^{T}\int_\Omega \varrho \mathbf u_1\varphi \,dxdt-
 \int_\Omega \varrho_0\mathbf u_{1,0} \varphi(x,0)\,dx.
\label{radon5}
\end{multline}
Next notice that
\begin{multline*}
\Big|\frac{1}{h}\int_{T-h}^{T}\int_\Omega \varrho \mathbf u_1\varphi
\,dxdt\Big|+
 \Big|\int_\Omega \varrho_0\mathbf u_{1,0} \varphi(x,0)\,dx\Big|\leq\\
 \|\varrho \mathbf u\|_{L^\infty(0,T; L^1(\Omega))}\|\varphi\|_{L^\infty(Q_T)}+
 \|\varrho_0 \mathbf u_0\|_{ L^1(\Omega)} \|\varphi\|_{L^\infty(Q_T)}\leq E \|\varphi\|_{L^\infty(Q_T)}.
\end{multline*}
Letting $h\to 0$  in \eqref{radon5} we arrive at \eqref{radon4}
\end{proof}

Now we specify the test function    $\varphi$. Choose
$\omega:\mathbb R\to \mathbb R^+$ satisfying
$$
\omega\in C^\infty_0(\mathbb R), \quad \text{spt~}\omega \subset
[-1,1], \quad \omega \text{~~is even~}, \quad \int_{\mathbb R}
\omega(s)\, ds=1,
$$
For every  $f\in L^1_{\rm loc}(\mathbb R^2)$ define the mollifiers
\begin{equation}\label{radon7}
    \Big[ f\Big]_h=\frac{1}{h^2}
    \int_{\mathbb R^2}\omega\big(\frac{x_1-y_1}{h}\big)
    \omega\big(\frac{x_2-y_2}{h}\big)f(y)\, dy.
\end{equation}
Introduce the auxiliary functions
\begin{subequations}\label{radon9}
\begin{equation}\label{radon9a}
H(x_1,t)=\int_{-\infty}^{x_1} \Psi(s,t)\, ds,\quad
  \Psi(x_1,t)=\int_{\mathbb R } \big[\zeta \varrho\big]_h (x_1, x_2, t)\,
  dx_2,
  \end{equation}
and take the test function $\varphi$ in the form
\begin{equation}\label{radon9b}
 \varphi(x,t)=\zeta(x) \, \big[\, H\,\big]_h(x_1,t).
\end{equation}
\end{subequations}
The following lemma constitutes properties of $\Psi$ and $H$.
\begin{lemma}\label{radonlemma2}
 $\Psi, H\in L^\infty(0,T; C^k(\mathbb R))$ for every integer $k\geq0$, and
\begin{equation}\label{radon10}
    \|H\|_{L^\infty(\mathbb R\times (0,T))}\leq c(\zeta)E.
\end{equation}
\end{lemma}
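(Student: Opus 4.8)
The plan is to unwind the definitions of $\Psi$ and $H$ and to exploit the fact that the mollification $[\,\zeta\varrho\,]_h$ and all its spatial derivatives are controlled by the $L^\infty(0,T;L^1)$-bound on $\varrho$ together with the support and smoothness of $\zeta$. First I would observe that since $\zeta\in C^\infty_0(\mathbb R^2)$ with $\operatorname{spt}\zeta\Subset\Omega$ and $\varrho\in L^\infty(0,T;L^1(\Omega))$ with $\varrho\ge 0$, the product $\zeta\varrho$ lies in $L^\infty(0,T;L^1(\mathbb R^2))$ with norm $\le\|\zeta\|_{L^\infty}\|\varrho\|_{L^\infty(0,T;L^1)}\le c(\zeta)E$; moreover $\operatorname{spt}[\,\zeta\varrho\,]_h(\cdot,\cdot,t)$ stays inside a fixed compact set $K\Subset\Omega$ once $h$ is small. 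Because $\omega\in C^\infty_0(\mathbb R)$, the function $x\mapsto\omega((x_1-y_1)/h)\omega((x_2-y_2)/h)$ is smooth and, for each fixed $h$, has bounded derivatives of every order; differentiating \eqref{radon7} under the integral sign and using that $\varrho\ge 0$ integrates against an $L^\infty$ kernel shows $\big\|\partial_{x_1}^k[\,\zeta\varrho\,]_h(\cdot,t)\big\|_{L^\infty(\mathbb R^2)}\le c(k,h,\zeta)E$ for all $k$, uniformly in $t$. Integrating in $x_2$ over the bounded interval where the support lies, we get $\Psi(\cdot,t)\in C^k(\mathbb R)$ for every $k$ with $L^\infty$-norms bounded uniformly in $t$, hence $\Psi\in L^\infty(0,T;C^k(\mathbb R))$; then $H$, being the $x_1$-antiderivative of $\Psi$, inherits one extra degree of smoothness, so $H\in L^\infty(0,T;C^k(\mathbb R))$ as well.

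Next I would establish the quantitative bound \eqref{radon10}. The key point is that $H(x_1,t)=\int_{-\infty}^{x_1}\Psi(s,t)\,ds$ is a \emph{non-decreasing} function of $x_1$ for each fixed $t$, since $\Psi\ge 0$ (both $\zeta$ and $\varrho$ are nonnegative and the mollifier $\omega\ge 0$, so $[\,\zeta\varrho\,]_h\ge 0$ and its $x_2$-integral $\Psi$ is nonnegative). Consequently $0\le H(x_1,t)\le H(+\infty,t)=\int_{\mathbb R}\Psi(s,t)\,ds=\int_{\mathbb R^2}[\,\zeta\varrho\,]_h(x,t)\,dx$. By Fubini and the fact that $\omega$ integrates to $1$ in each variable, $\int_{\mathbb R^2}[\,\zeta\varrho\,]_h(x,t)\,dx=\int_{\mathbb R^2}(\zeta\varrho)(y,t)\,dy\le\|\zeta\|_{L^\infty}\|\varrho(\cdot,t)\|_{L^1(\Omega)}\le c(\zeta)E$ for a.e.\ $t\in(0,T)$. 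This gives $\|H\|_{L^\infty(\mathbb R\times(0,T))}\le c(\zeta)E$, which is exactly \eqref{radon10}.

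The only mild subtlety — and the step I would be most careful about — is the justification of differentiation under the integral sign and of Fubini's theorem in the $t$-a.e.\ sense: one needs to know that for a.e.\ $t$ the slice $\varrho(\cdot,t)$ is genuinely an $L^1(\Omega)$ function (which follows from $\varrho\in L^\infty(0,T;L^1(\Omega))$ after passing to a good representative), and that the bounds on the mollified derivatives are measurable in $t$. None of this is deep; it is a routine consequence of the regularizing properties of convolution with a fixed $C^\infty_0$ kernel. I would also remark that the constants here do depend on $h$ through the derivative bounds on $\Psi$ and $H$ in $C^k$ for $k\ge 1$, but the crucial estimate \eqref{radon10} on $H$ itself is \emph{independent} of $h$ and of $\varepsilon$ — this uniformity is what makes the lemma useful in the subsequent limiting arguments, and I would state it explicitly.
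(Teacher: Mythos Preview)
Your proposal is correct and follows essentially the same route as the paper: both use that $\zeta\varrho\in L^\infty(0,T;L^1(\mathbb R^2))$ to get $C^k$-bounds on $[\zeta\varrho]_h$ by differentiating the mollifier, note the compact support to integrate out $x_2$ and obtain the regularity of $\Psi$ and hence $H$, and then bound $|H(x_1,t)|$ by $\int_{\mathbb R}\Psi\,dx_1=\int_{\mathbb R^2}[\zeta\varrho]_h\,dx=\int_\Omega\zeta\varrho\,dx\le c(\zeta)E$. Your explicit monotonicity remark and the comment that the $C^k$-constants depend on $h$ while \eqref{radon10} does not are helpful clarifications but do not change the argument.
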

\begin{proof}
 Notice that $\zeta\varrho\in L^\infty(0,T; L^1(\mathbb R^2))$ and its norm in this space does not exceed $E$. Hence for a.e. $t\in(0,T)$,
$$
\|\big[\zeta\varrho\big]_h(t)\|_{C^k(\mathbb R^2)}\leq c(k)
\|\zeta\varrho(t)\|_{L^1(\mathbb R^2)}\leq c(k)E.
$$
Hence $\big[\zeta\varrho\big]_h$ belongs to $L^\infty(0,T;
C^k(\mathbb R))$. Next, there is $N$ such that the square  $[-N+1,
N-1]^2$ contains domain $\Omega$. Hence the function
$\big[\zeta\varrho\big]_h(t)$ is compactly supported in the square
$[-N,N]^2$. It follows that $\Psi\in L^\infty(0,T; C^k(\mathbb R))$
and $\Psi(\cdot, t)$ is supported in the interval $[-N,N]$. From
this and \eqref{radon9} we conclude that $H\in L^\infty(0,T;
C^k(\mathbb R))$. It remains to note that
$$
|H(x_1,t)|\leq \int_{\mathbb R}\Psi(x_1,t)dx_1\leq \int_{\mathbb
R^2}\big[\zeta\varrho\big]_hdxdt =\int_{\mathbb
R^2}\zeta\varrho\,dxdt=\int\limits_\Omega \zeta(x)\varrho(x,t)dx\leq
cE.
$$
\end{proof}

Now we investigate in details the time dependence of $H$.

\begin{lemma}\label{radonlemma3} The function $\partial_tH$ belongs to the
class $L^\infty(0,T; C^k(\Omega))$ for every integer $k\geq 0$.
Moreover, it has the representation
\begin{equation}\label{radon11}
    \partial_t H=-\upsilon_h +J_0,\text{~~where~~} \upsilon_h(x_1, t)=\int_{\mathbb R}\big[\zeta\varrho u_1\big]_h(x_1,x_2, t)\, dx_2,
\end{equation}
and the reminder admits the estimate
\begin{equation}\label{radon13}
    |J_0|\leq c(\zeta) E.
\end{equation}
\end{lemma}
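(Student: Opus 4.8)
The plan is to differentiate the definition of $H$ in \eqref{radon9a} with respect to $t$ and use the continuity equation \eqref{navier3} to replace $\partial_t\varrho$. First I would note that, since $\zeta\varrho\in L^\infty(0,T;L^1(\mathbb R^2))$ is compactly supported in a fixed square $[-N,N]^2$, the mollified function $\big[\zeta\varrho\big]_h$ lies in $L^\infty(0,T;C^k)$ for every $k$; its time derivative, computed by mollifying the distributional identity $\partial_t(\zeta\varrho)=-\zeta\div(\varrho\mathbf u)=-\div(\zeta\varrho\mathbf u)+\varrho\mathbf u\cdot\nabla\zeta$, is again in $L^\infty(0,T;C^k)$. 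Integrating in $x_2$ and then in $x_1$ over $(-\infty,x_1)$ as in \eqref{radon9a}, and interchanging $\partial_t$ with these integrations (justified because everything is compactly supported and smooth in $x$, bounded in $t$), yields $\partial_t H\in L^\infty(0,T;C^k(\Omega))$.

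Next I would carry out the explicit computation. Using the continuity equation in the form $\partial_t(\zeta\varrho)=-\partial_{x_i}\big[\zeta\varrho u_i\big]+\varrho u_i\,\partial_{x_i}\zeta$ and mollifying, one gets
\begin{equation*}
\partial_t\big[\zeta\varrho\big]_h=-\partial_{x_i}\big[\zeta\varrho u_i\big]_h+\big[\varrho u_i\,\partial_{x_i}\zeta\big]_h.
\end{equation*}
Integrate this over $x_2\in\mathbb R$: the term $\partial_{x_2}\big[\zeta\varrho u_2\big]_h$ integrates to zero by compact support, leaving
\begin{equation*}
\partial_t\Psi(x_1,t)=-\partial_{x_1}\upsilon_h(x_1,t)+K(x_1,t),\qquad K(x_1,t)=\int_{\mathbb R}\big[\varrho u_i\,\partial_{x_i}\zeta\big]_h(x_1,x_2,t)\,dx_2,
\end{equation*}
with $\upsilon_h$ as in \eqref{radon11}. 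Integrating in $x_1$ from $-\infty$ to the current value and using that $\upsilon_h$ vanishes at $-\infty$ gives $\partial_t H=-\upsilon_h+J_0$ with $J_0(x_1,t)=\int_{-\infty}^{x_1}K(s,t)\,ds$.

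It remains to estimate $J_0$. Since $|J_0(x_1,t)|\le\int_{\mathbb R}|K(s,t)|\,ds\le\int_{\mathbb R^2}\big|\big[\varrho u_i\,\partial_{x_i}\zeta\big]_h\big|\,dx\le\|\varrho\mathbf u\cdot\nabla\zeta\|_{L^1(\mathbb R^2)}\le\|\nabla\zeta\|_{L^\infty}\|\varrho\mathbf u\|_{L^1(\Omega)}$, and $\|\varrho\mathbf u\|_{L^\infty(0,T;L^1(\Omega))}\le E$ by the energy inequality \eqref{navier7} (via Cauchy--Schwarz, $\varrho|\mathbf u|\le\tfrac12(\varrho+\varrho|\mathbf u|^2)$), we obtain $|J_0|\le c(\zeta)E$, which is \eqref{radon13}. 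The only mildly delicate point is justifying the interchange of $\partial_t$ with the spatial integrations and the mollification with the distributional time derivative; this is routine given the uniform compact support in $x$ and the $L^\infty$-in-$t$ bounds, so I expect no real obstacle here — the lemma is essentially bookkeeping built on the continuity equation and the basic energy bound.
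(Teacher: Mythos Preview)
Your proposal is correct and follows essentially the same route as the paper: derive $\partial_t[\zeta\varrho]_h=-\div[\zeta\varrho\mathbf u]_h+[\varrho\mathbf u\cdot\nabla\zeta]_h$ from the continuity equation, integrate in $x_2$ and then in $x_1$ to obtain $\partial_tH=-\upsilon_h+J_0$, and bound $J_0$ by $\|\varrho\mathbf u\|_{L^\infty(0,T;L^1(\Omega))}\leq cE$. The only difference is cosmetic: where you invoke the distributional identity directly and then mollify, the paper makes the ``mildly delicate point'' explicit by inserting the mollifier kernel as a test function into the renormalized identity \eqref{navier10} with $\varphi(\varrho)=\varrho$, which is precisely the rigorous justification of the step you flagged.
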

\begin{proof} Integral identity
\eqref{navier10} with
 $\varphi(\varrho)=\varrho$ and
  $\psi$ replaced  by $\zeta\psi$ reads
\begin{equation}\label{radon14}
\int_{\Pi}\Big(\zeta\varrho\,\partial_t\psi+ \zeta \varrho\mathbf u
\cdot\nabla\psi +\psi \varrho \nabla\zeta\mathbf u\Big)\,dxdt
+\int_{\mathbb R^2 } \psi(x,0)\zeta(x)\varrho_0(x)\, dx= 0.
\end{equation}
This identity holds true for  all  functions $\psi\in
C^\infty(\mathbb R^2\times (0,T)$, vanishing  in a neighborhood of
the top $\mathbb R^2\times \{t=T\}$. Now choose an arbitrary $\xi\in
C^\infty_0(0,T)$ and $y\in \mathbb R^2$. Inserting
$$
\psi=\xi (t)\,h^{-2}\omega\big(\frac{x_1-y_1}{h}\big)
    \omega\big(\frac{x_2-y_2}{h}\big)
$$
into \eqref{radon14} we arrive at
$$
\int_0^T\Big(\big[\zeta\varrho\big]_h(y,t)\xi'(t)-\xi\div\big[\zeta\varrho\mathbf
u\big]_h(y,t) +\xi\big[\varrho\nabla\zeta\big]_h(y,t)\Big)dt=0,
$$
which yields
\begin{equation}\label{radon15}
   \partial_t  \big[\zeta\varrho\big]_h=-\div \big[\zeta\varrho\mathbf u\big]_h+\big[\varrho\nabla\zeta\cdot \mathbf u\big]_h\text{~~in ~~} \mathbb R^2\times [0,T].
\end{equation}
Next, Corollary \ref{navier12} implies that $\zeta\varrho\mathbf u$
and $\varrho\nabla\zeta\cdot \mathbf u$ belong to $L^\infty(0,T;
L^1(\mathbb R^2))$. Hence the functions $\div
\big[\zeta\varrho\mathbf u\big]_h$ and $\big[\varrho\nabla\zeta\cdot
\mathbf u\big]_h$ belong to $L^\infty(0,T; C^k(\mathbb R^2))$ for
all integer $k\geq 0$. Moreover, they are supported in $Q_T$. It
follows that   $\partial_t\big[\zeta\varrho\big]_h$ belongs to
$L^\infty(0,T; C^k(\mathbb R^2))$ and is supported in $Q_T$.
Therefore, the function
$$
\partial_t H =\int_{-\infty}^{x_1}\int_{-\infty}^\infty  \partial_t  \big[\zeta\varrho\big]_h(s,x_2,t)\,ds dx_2
$$
belongs to the class $L^\infty(0,T; C^k(\mathbb R^2)).$ Integrating
both sides of \eqref{radon15} over $(-\infty, x_1]\times \mathbb R$
we obtain representation \eqref{radon11} with the reminder
$$
J_0(x_1,t)=\int_{-\infty}^{x_1}\Big\{\int_{\mathbb
R}\big[\varrho\nabla\zeta\cdot \mathbf u\big]_h(s,x_2,t)
dx_2\Big\}ds.
$$
It remains to note that for a. e. $t\in (0,T)$, we have
\begin{multline*}
|J_0(x_1, t)|\leq \int_{\mathbb R^2}\big[\varrho|\nabla\zeta||
\mathbf u|\big]_h(x,t) dx=
\int_{\mathbb R^2}\varrho|\nabla\zeta|| \mathbf u|(x,t) dx\leq\\
c(\zeta)\int_{\Omega}\varrho |\mathbf u|(x,t) dx\leq
c(\zeta)\|\varrho\mathbf u\|_{L^\infty(0,T; L^1(\Omega))}\leq
c(\zeta) E.
\end{multline*}
\end{proof}

In view of Lemma \ref{radonlemma3} the function $\varphi $ given by
formula \eqref{radon9}  belongs to  $L^\infty(0,T; C^k(\mathbb
R^2))$  and is supported in $ Q_T$.  Moreover, we have
\begin{equation}\label{radon16}
    \|\varphi\|_{L^\infty(Q_T)}\leq c(\zeta) \|H\|_{L^\infty(\mathbb R\times(0,T))}\leq c(\zeta) E.
\end{equation}
Substituting $\varphi$ in \eqref{radon4} and using \eqref{radon16}
we obtain
\begin{equation}\label{radon17}
    I_1+I_2+I_3+I_4+I_5\leq c(\zeta)E
\end{equation}
where
\begin{equation}\label{radon18}\begin{split}
    I_1=\int_{\Pi}\varrho u_1\partial_t\varphi dxdt, \quad
    I_2=\int_{\Pi}\varrho u_1u_i\partial_{x_i}\varphi dxdt,\quad
    I_3=\int_{\Pi}p\partial_{x_1}\varphi dxdt, \\
    I_4=-\int_{\Pi}\mathbb S_{i1}\partial_{x_i}\varphi dxdt,\quad
    I_5=\int_{\Pi}\varrho f_1\varphi dxdt.
\end{split}\end{equation}
Let us consider each term in \eqref{radon17} separately.
\begin{lemma}\label{radonlemma4}
\begin{equation}\label{radon21}
I_1=- \int_0^T\int_{\mathbb R} \upsilon_h^2\,dx_1dt +J_1,
\text{~~where~~} |J_1|\leq c(\zeta)E.
\end{equation}
\end{lemma}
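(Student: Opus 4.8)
The plan is to start from the definition $I_1=\int_\Pi \varrho u_1\,\partial_t\varphi\,dxdt$ with $\varphi(x,t)=\zeta(x)[H]_h(x_1,t)$, and to move the time derivative onto $H$ using Lemma \ref{radonlemma3}. Since $\partial_t\varphi=\zeta(x)\,\partial_t[H]_h(x_1,t)=\zeta(x)\,[\partial_tH]_h(x_1,t)$ and, by \eqref{radon11}, $\partial_t H=-\upsilon_h+J_0$, we get $\partial_t\varphi=-\zeta(x)[\upsilon_h]_h(x_1,t)+\zeta(x)[J_0]_h(x_1,t)$. Thus
\[
I_1=-\int_\Pi \varrho u_1\,\zeta(x)\,[\upsilon_h]_h\,dxdt+\int_\Pi \varrho u_1\,\zeta(x)\,[J_0]_h\,dxdt.
\]
The second integral is an error term: by \eqref{radon13} we have $|[J_0]_h|\leq c(\zeta)E$ pointwise, hence it is bounded by $c(\zeta)E\,\|\varrho u_1\zeta\|_{L^1(\Pi)}\leq c(\zeta)E\|\varrho\mathbf u\|_{L^1(0,T;L^1(\Omega))}\cdot T\leq c(\zeta)E$, using the energy bound from \eqref{einequality}. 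So this contributes to $J_1$.

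For the first integral I would recognize that integrating $\varrho u_1$ against $\zeta$ over the $x_2$-variable reconstructs (up to the mollification) the quantity $\upsilon_h$ itself: recall from \eqref{radon11} that $\upsilon_h(x_1,t)=\int_{\mathbb R}[\zeta\varrho u_1]_h(x_1,x_2,t)\,dx_2$. The key identity to establish is
\[
\int_{\mathbb R^2}\varrho u_1\,\zeta(x)\,[\upsilon_h]_h(x_1,t)\,dx
=\int_{\mathbb R}\upsilon_h(x_1,t)\,[\upsilon_h]_h(x_1,t)\,dx_1+(\text{commutator error}),
\]
obtained by first doing the $x_2$-integration (which turns $\int_{\mathbb R}\varrho u_1\zeta\,dx_2$ into $\int_{\mathbb R}[\zeta\varrho u_1]_h\,dx_2$ up to a standard mollifier commutator, since $[\zeta\varrho u_1]_h$ and $\zeta\varrho u_1$ have the same $x_2$-integral only after accounting for the $x_1$-mollification), and then replacing $[\upsilon_h]_h$ by $\upsilon_h$ at the cost of another commutator. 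After these reductions the main term becomes $\int_0^T\int_{\mathbb R}\upsilon_h\,[\upsilon_h]_h\,dx_1dt$, and a final symmetrization (using that the one-dimensional mollifier built from the even kernel $\omega$ is self-adjoint, so $\int \upsilon_h[\upsilon_h]_h=\int[\upsilon_h]_{h/\sqrt 2}^2$ — or more simply just write $\int \upsilon_h[\upsilon_h]_h\,dx_1=\int \upsilon_h^2\,dx_1-\int\upsilon_h(\upsilon_h-[\upsilon_h]_h)\,dx_1$) produces the claimed leading term $-\int_0^T\int_{\mathbb R}\upsilon_h^2\,dx_1dt$ plus a remainder.

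The main obstacle is controlling all the commutator/mollification remainders uniformly, i.e. showing each is bounded by $c(\zeta)E$ independently of $h$ and $\varepsilon$. The difference $\upsilon_h-[\upsilon_h]_h$ is small in a suitable sense, but $\upsilon_h$ itself is only controlled in $L^1$ in time (from $\varrho\mathbf u\in L^\infty(0,T;L^1)$), not in $L^2$, so one cannot naively bound $\int\upsilon_h(\upsilon_h-[\upsilon_h]_h)$ by a product of $L^2$ norms with a vanishing factor. The resolution I expect is to keep these remainder terms unestimated for now and absorb them into $J_1$ only after pairing with the other terms $I_2,\dots,I_5$ later in the argument — or, more likely given the structure here, to bound them crudely using the $L^\infty$ pointwise bound $|[\upsilon_h]_h|\leq c(\zeta)E$ (which follows just as in Lemma \ref{radonlemma3}, since $\zeta\varrho u_1\in L^\infty(0,T;L^1)$ forces $[\zeta\varrho u_1]_h$ and hence $\upsilon_h$ into $L^\infty(0,T;C^k)$ with norm $\leq c(\zeta)E$), so that $|\int\upsilon_h(\upsilon_h-[\upsilon_h]_h)\,dx_1dt|\leq c(\zeta)E\,\|\upsilon_h\|_{L^1(\Pi)}\leq c(\zeta)E$. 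With that uniform $L^\infty$ bound in hand every remainder is of the form (bounded function)$\times(L^1$ function$)$ and the estimate $|J_1|\leq c(\zeta)E$ follows.
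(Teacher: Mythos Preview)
Your argument has a genuine gap. You decompose $\partial_t\varphi=-\zeta[\upsilon_h]_h+\zeta[J_0]_h$ and then try to turn $\int_\Pi \zeta\varrho u_1\,[\upsilon_h]_h\,dx\,dt$ into $\int \upsilon_h^2\,dx_1\,dt$ via a chain of commutator estimates. To close these you invoke a pointwise bound $|\upsilon_h|\le c(\zeta)E$ ``just as in Lemma~\ref{radonlemma3}''. But that bound is false uniformly in $h$: from the definition,
\[
\upsilon_h(x_1,t)=\frac{1}{h}\int_{\mathbb R^2}\omega\!\Big(\frac{x_1-y_1}{h}\Big)\,\zeta\varrho u_1(y,t)\,dy,
\]
so $\|\upsilon_h(\cdot,t)\|_{L^\infty}\le h^{-1}\|\omega\|_{L^\infty}\|\zeta\varrho u_1(\cdot,t)\|_{L^1}$, which blows up as $h\to 0$. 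Only the $L^1_{x_1}$ norm of $\upsilon_h$ is uniformly bounded. Lemma~\ref{radonlemma2} gets a uniform $L^\infty$ bound for $H$ because $H$ is an \emph{antiderivative} of a nonnegative $L^1$ function; $\upsilon_h$ is not of this form. Consequently your remainder $\int\upsilon_h(\upsilon_h-[\upsilon_h]_h)\,dx_1\,dt$ is not controlled by the argument you give.

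The missing idea is that no commutators are needed at all: the mollifier $[\cdot]_h$ is symmetric, so
\[
I_1=\int_\Pi \zeta\varrho u_1\,[\partial_t H]_h\,dx\,dt
=\int_\Pi [\zeta\varrho u_1]_h\,\partial_t H\,dx\,dt
=\int_0^T\!\!\int_{\mathbb R}\upsilon_h\,\partial_t H\,dx_1\,dt,
\]
the last equality by integrating out $x_2$ (since $\partial_t H$ depends only on $x_1$) and using the very definition of $\upsilon_h$. Now substitute $\partial_t H=-\upsilon_h+J_0$ from \eqref{radon11} to get $I_1=-\int\upsilon_h^2+\int J_0\upsilon_h$, and the single remainder $J_1=\int J_0\upsilon_h$ is bounded by $c(\zeta)E\,\|\upsilon_h\|_{L^1}\le c(\zeta)E$ using \eqref{radon13}. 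This is exactly the paper's proof; the self-adjointness step replaces your entire commutator discussion and yields the leading term $-\int\upsilon_h^2$ exactly, with a remainder that really is of the form (uniformly bounded)$\times(L^1)$.
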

\begin{proof}
Since $\varrho u_1\in L^2(\Pi)$ and the mollifying operator is
symmetric, it follows from \eqref{radon9} that
$$
I_1=\int_{\Pi}\Big[\zeta \varrho u_1\Big]_h(x_1,x_2,t)
\partial_t H(x_1,t)\,dxdt.
$$
Next, the function $\Big[\zeta \varrho u_1\Big]_h$ is supported in
$\Omega\times [0,T]$. Therefore, the function $\upsilon_h$ is
supported in every rectangular $[-N,N]\times [0,T]$ such that
$[-N,N]^2\supset \Omega$. From this we conclude that
\begin{equation*}\begin{split}
I_1 =\int_0^T\int_{\mathbb R}\Big\{\int_{\mathbb R}\Big[\zeta
\varrho u_1\Big]_h\, dx_2\Big\}\partial_t H(x_1,t)dx_1dt
=\int_0^T\int_{\mathbb R}\upsilon_h(x_1,t)
\partial_t H(x_1,t)\,dx_1dt.
\end{split}\end{equation*}
 Inserting expression \eqref{radon11} for $\partial_t H$ we obtain representation \eqref{radon21}
with the reminder
$$
J_1=\int_0^T\int_{\mathbb R}J_0 \upsilon_h\, dx_1dt.
$$
It remains to note that in view of \eqref{radon13},
\begin{equation*}\begin{split}
|J_1| \leq c(\zeta)E\int_0^T\int_{\mathbb R}|\upsilon_h|\,dx_1dt\leq
c(\zeta)E \int_\Pi\big[\zeta \varrho |\mathbf u|\big]_h\, dxdt\\=c
E\int_\Pi\zeta \varrho |\mathbf u|\, dxdt=c E\int_{Q_T}\zeta \varrho
|\mathbf u|\, dxdt\leq c(\zeta)E\int_{Q_T}\varrho |\mathbf u|\,
dxdt\leq c(\zeta)E.
\end{split}\end{equation*}
\end{proof}

\begin{lemma}\label{radonlemma5}
\begin{equation}\label{radon22}
I_2=\int_0^T\int_{\mathbb R}\Upsilon_h(x_1,t) \Psi(x_1,t)\, dx_1dt+
J_2,\, \text{where}\, \Upsilon_h=\int\limits_{\mathbb
R}\big[\zeta\varrho u_1^2\big]_h(x_1,x_2, t)\, dx_2,
\end{equation}
and the reminder $J_2$ admits the estimate
\begin{equation}\label{radon23}
    |J_2|\leq c(\zeta) E.
\end{equation}
Moreover, the function $\Upsilon_h$ belongs to the class
$L^\infty(0,T; C^k(\mathbb R))$ for every integer $k\geq 0$. It is
supported in any rectangular $[-N,N]\times [0,T]$ such that
$[-N,N]^2\supset \Omega$.
\end{lemma}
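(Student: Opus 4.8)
The plan is to insert the explicit form $\varphi=\zeta\,\big[H\big]_h$ into $I_2$, peel off a harmless $\nabla\zeta$--term, and then recognize the remaining term as the one written in \eqref{radon22}. Because $H$ and $\Psi=\partial_{x_1}H$ depend only on $(x_1,t)$, mollification in the $x_2$--variable is trivial on them, so $\big[H\big]_h$ and $\big[\Psi\big]_h$ are again functions of $(x_1,t)$ alone, with $\partial_{x_1}\big[H\big]_h=\big[\Psi\big]_h$ and $\partial_{x_2}\big[H\big]_h=0$. Hence $\partial_{x_i}\varphi=(\partial_{x_i}\zeta)\big[H\big]_h+\delta_{i1}\,\zeta\,\big[\Psi\big]_h$, and summation over $i$ gives
\[
I_2=\int_\Pi \zeta\,\varrho\,u_1^2\,\big[\Psi\big]_h\,dxdt+\int_\Pi \varrho\,u_1u_i\,(\partial_{x_i}\zeta)\,\big[H\big]_h\,dxdt=:I_2'+J_2 .
\]
Before manipulating these integrals I would record that \eqref{navier7} gives $\zeta\varrho|\mathbf u|^2\in L^\infty(0,T;L^1(\mathbb R^2))$ with norm at most $c(\zeta)E$, so in particular $\zeta\varrho u_1^2\in L^1(\Pi)$ and every interchange of integrals below is legitimate.

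For the main term $I_2'$, I would freeze $t$, integrate first in $x_2$, and then transfer the mollifier by self--adjointness of the one--dimensional mollifying operator, obtaining
\[
\int_{\mathbb R^2}\zeta\varrho u_1^2\,\big[\Psi\big]_h\,dx=\int_{\mathbb R} g(x_1,t)\,\big[\Psi\big]_h\,dx_1=\int_{\mathbb R}\big[g\big]_h\,\Psi\,dx_1,\qquad g(x_1,t):=\int_{\mathbb R}\zeta\varrho u_1^2\,dx_2 .
\]
The identity that makes this equal to the claimed expression is $\big[g\big]_h=\Upsilon_h$, which is a one--line computation: integrating the defining formula \eqref{radon7} for $\big[\zeta\varrho u_1^2\big]_h$ over $x_2$ and using $\int_{\mathbb R}\omega=1$ produces exactly the one--dimensional mollifier of $g$. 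Thus $I_2'=\int_0^T\int_{\mathbb R}\Upsilon_h\Psi\,dx_1dt$. The remainder is then estimated with \eqref{radon10} and \eqref{navier7}:
\[
|J_2|\le \|\nabla\zeta\|_{L^\infty}\,\|\big[H\big]_h\|_{L^\infty(\mathbb R\times(0,T))}\int_\Pi \varrho|\mathbf u|^2\,dxdt\le c(\zeta)E\,T\,\|\varrho|\mathbf u|^2\|_{L^\infty(0,T;L^1(\Omega))}\le c(\zeta)E,
\]
which is \eqref{radon23}.

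For the regularity and support of $\Upsilon_h$ I would use $0\le\zeta\varrho u_1^2\le\zeta\varrho|\mathbf u|^2\in L^\infty(0,T;L^1(\mathbb R^2))$ (norm $\le c(\zeta)E$) and simply repeat the mollification argument of Lemma \ref{radonlemma2}: $\big[\zeta\varrho u_1^2\big]_h\in L^\infty(0,T;C^k(\mathbb R^2))$ for all $k$, and since $\zeta\varrho u_1^2$ is supported in $\text{spt}\,\zeta\Subset\Omega$, its mollifier is supported in $[-N,N]^2\times[0,T]$ whenever $[-N,N]^2\supset\Omega$; integrating out $x_2$ transfers both properties to $\Upsilon_h$. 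I do not expect a real obstacle: the whole lemma runs parallel to Lemma \ref{radonlemma4}, and the only delicate point is the bookkeeping of the two mollifiers — namely the identities $\partial_{x_1}\big[H\big]_h=\big[\Psi\big]_h$ and $\int_{\mathbb R}\big[\zeta\varrho u_1^2\big]_h\,dx_2=\big[g\big]_h$ — together with the preliminary check that $\zeta\varrho u_1^2$ is genuinely integrable over $\Pi$.
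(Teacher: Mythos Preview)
Your proof is correct and follows essentially the same approach as the paper's. The paper transfers the mollifier in one step via self--adjointness of the two--dimensional operator, $\int_\Pi \zeta\varrho u_1^2\,\big[\Psi\big]_h\,dxdt=\int_\Pi\big[\zeta\varrho u_1^2\big]_h\,\Psi\,dxdt$, and only afterwards integrates out $x_2$; you instead integrate $x_2$ first and then use one--dimensional self--adjointness, which amounts to the same computation in a different order. The remainder estimate and the regularity/support argument for $\Upsilon_h$ match the paper's exactly.
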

\begin{proof}
Notice that $\zeta\varrho u_iu_1\in L^\infty(0,T; L^1(\Omega))$ is
 supported in $Q_T$.  It follows from
\eqref{radon9} that
\begin{equation*}\begin{split}
    I_2=\int\limits_{\Pi}\zeta\varrho u_1^2\big[\frac{\partial H}{\partial x_1}\big]_h\, dxdt +J_2,\text{~~where~~} J_2=\int\limits_{\Pi}\varrho u_1(\nabla\zeta\cdot\mathbf  u)\big[ H\big]_h\, dxdt.
\end{split}\end{equation*}
Since $\partial_{x_1} H= \Psi$ is independent of $x_2$, we have
\begin{equation*}\begin{split}
\int\limits_{\Pi}\zeta\varrho u_1^2\big[\frac{\partial H}{\partial
x_1}\big]_h\, dxdt= \int\limits_{\Pi}\zeta\varrho
u_1^2\big[\Psi\big]_h\, dxdt=
\int\limits_{\Pi}\big[\zeta\varrho u_1^2\big]_h \Psi\, dxdt=\\
\int_0^T\int_{\mathbb R} \Psi(x_1,t)\Big\{\int_{\mathbb
R}\big[\zeta\varrho u_1^2\big]_h\, dx_2\Big\}
dx_1dt=\int_0^T\int_{\mathbb R}\Upsilon_h(x_1,t) \Psi(x_1,t)\,
dx_1dt.
\end{split}\end{equation*}
This leads to the desired representation \eqref{radon22}. In order
to estimate $J_2$ notice that in view of \eqref{radon10}, we have
$\sup|\big[ H\big]_h|\leq \sup H\leq c(\zeta)E$. This gives
\begin{equation*}\begin{split}
|J_2|\leq c(\zeta) E\int_{Q_T}\varrho |\mathbf u|^2 dxdt\leq
c(\zeta) E.
\end{split}\end{equation*}
\end{proof}

\begin{lemma}\label{radonlemma6}
\begin{equation}\label{radon24}
    I_3\geq \int_0^T \int_{\mathbb R} \Psi^2dx_1dt+J_3, \text{~~where~~} |J_3|\leq c(\zeta, E).
\end{equation}

\end{lemma}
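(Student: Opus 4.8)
The plan is to expand $I_3 = \int_\Pi p\,\partial_{x_1}\varphi\,dxdt$ using the artificial pressure $p(\varrho)=\varrho+\varepsilon\varrho^\gamma$ and the structure of $\varphi = \zeta(x)\,[H]_h(x_1,t)$, and to extract the leading term $\int_0^T\int_{\mathbb R}\Psi^2\,dx_1\,dt$ from the linear part $\int_\Pi\varrho\,\partial_{x_1}\varphi$, treating everything else (the $\varepsilon\varrho^\gamma$ term, and the term coming from differentiating $\zeta$) as controllable remainders with the right sign or a bound $c(\zeta,E)$. First I would write $\partial_{x_1}\varphi = \zeta\,[\partial_{x_1}H]_h + (\partial_{x_1}\zeta)\,[H]_h = \zeta\,[\Psi]_h + (\partial_{x_1}\zeta)\,[H]_h$, using $\partial_{x_1}H=\Psi$ from \eqref{radon9a} and that $\Psi$ is independent of $x_2$ so the two-dimensional mollifier acts only in $x_1$.

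Next I would handle the linear contribution $\int_\Pi \varrho\,\partial_{x_1}\varphi\,dxdt$. For the main piece $\int_\Pi\varrho\,\zeta\,[\Psi]_h\,dxdt$ I use symmetry of the mollifier to move $[\cdot]_h$ onto $\zeta\varrho$, getting $\int_\Pi [\zeta\varrho]_h\,\Psi\,dxdt = \int_0^T\int_{\mathbb R}\Psi(x_1,t)\big\{\int_{\mathbb R}[\zeta\varrho]_h\,dx_2\big\}dx_1dt = \int_0^T\int_{\mathbb R}\Psi^2\,dx_1\,dt$, since by \eqref{radon9a} the inner integral is exactly $\Psi(x_1,t)$. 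This is the source of the announced leading term, with the correct (positive) coefficient $1$. The remaining linear piece $\int_\Pi \varrho\,(\partial_{x_1}\zeta)\,[H]_h\,dxdt$ is bounded in absolute value by $\|[H]_h\|_{L^\infty}\,\|\partial_{x_1}\zeta\|_{L^\infty}\int_{Q_T}\varrho\,dxdt \leq c(\zeta)E\cdot\|\varrho\|_{L^\infty(0,T;L^1)} \leq c(\zeta)E$, using Lemma \ref{radonlemma2} and the energy bound \eqref{einequality}; this goes into $J_3$.

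Then I would treat the artificial-pressure contribution $\varepsilon\int_\Pi \varrho^\gamma\,\partial_{x_1}\varphi\,dxdt = \varepsilon\int_\Pi \varrho^\gamma\zeta\,[\Psi]_h\,dxdt + \varepsilon\int_\Pi \varrho^\gamma(\partial_{x_1}\zeta)\,[H]_h\,dxdt$. The second term is bounded by $\varepsilon\,c(\zeta)E\int_{Q_T}\varrho^\gamma\,dxdt \leq c(\zeta)E$ by \eqref{navier7} and goes into $J_3$. The first term is the one place where I expect to need an argument that is not purely ``bound and forget'': the naive estimate only gives a bound depending on $\varepsilon$ through $\|\varrho^\gamma\|_{L^1}$, which is fine since we only need $|J_3|\le c(\zeta,E)$ — note the statement of Lemma \ref{radonlemma6} allows the remainder to depend on $E$ (hence implicitly on $\varepsilon$ via $\|\varrho^\gamma\|_{L^1}\le cE/\varepsilon$, so $\varepsilon\|\varrho^\gamma\|_{L^1}\le cE$); alternatively one observes $\varepsilon\varrho^\gamma\,[\Psi]_h$ can be rewritten after a mollifier transpose as $[\varepsilon\zeta\varrho^\gamma]_h\,\Psi$ and bounded by $\|\Psi\|_{L^\infty}\int\varepsilon\varrho^\gamma$, which is $\le c(\zeta,E)$ by \eqref{navier7} and Lemma \ref{radonlemma2}. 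Either way the term is absorbed into $J_3$. Collecting: $I_3 = \int_0^T\int_{\mathbb R}\Psi^2\,dx_1\,dt + J_3$ with $|J_3|\le c(\zeta,E)$, which in particular gives the stated inequality $I_3\ge \int_0^T\int_{\mathbb R}\Psi^2\,dx_1\,dt + J_3$; the main subtlety throughout is simply the careful bookkeeping of which mollifier acts in which variable and the repeated use of the transpose identity together with the fact that $\Psi$ does not depend on $x_2$.
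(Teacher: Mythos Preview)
Your decomposition and the extraction of the leading term $\int_0^T\int_{\mathbb R}\Psi^2\,dx_1dt$ are exactly right and match the paper. The gap is in your handling of the artificial-pressure piece $\varepsilon\int_\Pi\zeta\varrho^\gamma[\Psi]_h\,dxdt$. You try to put it into $J_3$ by bounding it in absolute value, ultimately invoking $\|\Psi\|_{L^\infty}$ and citing Lemma~\ref{radonlemma2}. But that lemma only gives an $h$-independent bound on $\|H\|_{L^\infty}$; the assertion $\Psi\in L^\infty(0,T;C^k(\mathbb R))$ there carries constants that blow up as $h\to 0$ (indeed $\Psi=[\Phi_1]_h^{(1)}$ is a 1D mollification of a merely $L^2$ function). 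Since the whole argument of Theorem~\ref{radontheorem} concludes by letting $h\to 0$ and applying Fatou, an $h$-dependent bound on $J_3$ is useless.

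The paper's remedy is much simpler and explains why the lemma is stated as an \emph{inequality}: since $\zeta\ge 0$, $\varrho\ge 0$, and $\Psi\ge 0$ (hence $[\Psi]_h\ge 0$), the term $\varepsilon\int_\Pi\zeta\varrho^\gamma[\Psi]_h\,dxdt$ is nonnegative and can simply be dropped, yielding
\[
\int_\Pi\zeta p\,[\Psi]_h\,dxdt\;\ge\;\int_\Pi\zeta\varrho\,[\Psi]_h\,dxdt\;=\;\int_0^T\int_{\mathbb R}\Psi^2\,dx_1dt.
\]
The remainder $J_3$ is then just the single term $\int_\Pi(\partial_{x_1}\zeta)\,p\,[H]_h\,dxdt$ (with the full $p$, not split), bounded via $\|H\|_{L^\infty}$ and the energy estimate $\int_{Q_T}p\,dxdt\le cE$. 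You actually anticipated this in your opening plan (``controllable remainders with the right sign''), but then abandoned it in the execution.
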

\begin{proof}We have
$$\partial_{x_1}\varphi=\partial_{x_1}\zeta \big[H\big]_h +\zeta \big[ \Psi\big]_h.
$$
Hence
$$
I_3=\int_\Pi\zeta p\big[\Psi\big]_h\, dxdt+J_3, \text{~~where~~}
J_3= \int_\Pi\partial_{x_1}\zeta p\big[H\big]_h\, dxdt.
$$
Since  $\Psi$ is nonnegative we have
\begin{equation*}\begin{split}
\int_\Pi\zeta p\big[\Psi\big]_h\, dxdt=\int_\Pi\zeta
\varrho\big[\Psi\big]_h\, dxdt+\varepsilon\int_\Pi\zeta
\varrho^\gamma\big[\Psi\big]_h\, dxdt
\geq \\
\int_\Pi\zeta \varrho\big[\Psi\big]_h\, dxdt= \int_\Pi\big[\zeta
\varrho\big]_h \Psi\, dxdt=\int_0^T\int_{\mathbb R} \Psi^2dx_1dt,
\end{split}\end{equation*}
which leads to  \eqref{radon24}. It remains to estimate $J_3$. To
this end notice that in view of \eqref{radon10} and \eqref{navier7},
$$
|J_3|\leq c(\zeta) \|H\|_{L^\infty(\mathbb R\times (0,T))}
\int_0^T\int_\Omega p \,dxdt\leq c(\zeta) E.
$$
\end{proof}

\begin{lemma}\label{radonlemma7}
\begin{gather}\label{radon26}
    |I_4|\leq  c(\zeta)E+c(\zeta)\,E\,\Big(\int_0^T\int_{\mathbb R}\Psi^2 dx_1dt\Big)^{1/2}, \quad
    |I_5|\leq c(\zeta)E.
\end{gather}

\end{lemma}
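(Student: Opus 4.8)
The plan is to estimate $I_4$ and $I_5$ directly from their integral definitions in \eqref{radon18}, using the structure of the test function $\varphi=\zeta[H]_h$ together with the a priori bounds \eqref{einequality} and \eqref{radon10}. For $I_5=\int_\Pi\varrho f_1\varphi\,dxdt$ the argument is immediate: since $|\varphi|\le c(\zeta)\|H\|_{L^\infty}\le c(\zeta)E$ by \eqref{radon16}, and $\varphi$ is supported in $Q_T$, one has $|I_5|\le c(\zeta)E\,\|f_1\|_{L^\infty(Q_T)}\int_{Q_T}\varrho\,dxdt\le c(\zeta)E$, using $\|\varrho\|_{L^\infty(0,T;L^1(\Omega))}\le E$ and $\|\mathbf f\|_{L^\infty(Q_T)}\le c_e$. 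This is routine.

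For $I_4=-\int_\Pi\mathbb S_{i1}\partial_{x_i}\varphi\,dxdt$, I would first expand $\partial_{x_i}\varphi=\partial_{x_i}\zeta\,[H]_h+\zeta\,\delta_{i1}[\Psi]_h$, exactly as in the proof of Lemma \ref{radonlemma6}. This splits $I_4$ into two pieces. The first piece, involving $\partial_{x_i}\zeta\,[H]_h$, is controlled by $c(\zeta)\|H\|_{L^\infty}\int_{Q_T}|\mathbb S(\mathbf u)|\,dxdt$; since $\mathbb S(\mathbf u)$ is linear in $\nabla\mathbf u$ and $\|\nabla\mathbf u\|_{L^2(Q_T)}\le c(E,\varepsilon)$ — but we need $\varepsilon$-independence, so use instead $\|\mathbf u\|_{L^2(0,T;W^{1,2}_0(\Omega))}\le cE$ from \eqref{einequality} (this bound is genuinely $\varepsilon$-independent since it comes from the energy inequality \eqref{navier7}) together with Cauchy–Schwarz in $(x,t)$ — this piece is $\le c(\zeta)E\|H\|_{L^\infty}\le c(\zeta)E$. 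The second piece is $-\int_\Pi\zeta\,\mathbb S_{11}(\mathbf u)\,[\Psi]_h\,dxdt$; moving the mollifier onto $\zeta\mathbb S_{11}$ is not quite clean because $\zeta$ depends on $x_2$ while $\Psi$ does not, so instead I would write it as $-\int_\Pi [\zeta\mathbb S_{11}]_h\,\Psi\,dxdt$ up to a commutator error of the same type as above, then integrate out $x_2$ to get $-\int_0^T\int_{\mathbb R}\big(\int_{\mathbb R}[\zeta\mathbb S_{11}]_h\,dx_2\big)\Psi(x_1,t)\,dx_1dt$, and bound by Cauchy–Schwarz in $x_1,t$: the $\Psi$-factor contributes $\big(\int_0^T\int_{\mathbb R}\Psi^2\big)^{1/2}$, and the other factor is bounded in $L^2(0,T;L^2_{x_1})$ by $c(\zeta)\|\mathbb S(\mathbf u)\|_{L^2(Q_T)}\le c(\zeta)E$, again using the $\varepsilon$-independent energy bound on $\nabla\mathbf u$. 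Combining yields $|I_4|\le c(\zeta)E+c(\zeta)E\big(\int_0^T\int_{\mathbb R}\Psi^2\,dx_1dt\big)^{1/2}$.

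The one step requiring care is the passage from $\int_\Pi\zeta\,\mathbb S_{11}\,[\Psi]_h$ to $\int_\Pi[\zeta\mathbb S_{11}]_h\,\Psi$: the mollifier $[\cdot]_h$ in \eqref{radon7} is a tensor product in $x_1,x_2$ and is self-adjoint, so $\int_\Pi\zeta\mathbb S_{11}[\Psi]_h\,dxdt=\int_\Pi[\zeta\mathbb S_{11}]_h\Psi\,dxdt$ \emph{exactly}, with no commutator at all, since $\Psi$ (though independent of $x_2$) is still a legitimate $L^2_{\rm loc}$ function on $\mathbb R^2$ and self-adjointness holds regardless. So in fact there is no error term here, and the estimate goes through cleanly. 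The genuine subtlety — and the thing to be careful about throughout — is to quote only bounds that are independent of $\varepsilon$: every use of $\nabla\mathbf u$ or $\mathbb S(\mathbf u)$ must be traced back to the $L^2(Q_T)$ bound in \eqref{einequality} (which descends from \eqref{navier7} and is $\varepsilon$-free), not to the $\varepsilon$-dependent Corollaries \ref{navier12}–\ref{navier13}. Everything else is a direct application of Hölder's and Cauchy–Schwarz's inequalities together with \eqref{radon10}.
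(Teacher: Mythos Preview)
Your proposal is correct and follows essentially the same route as the paper: split $\partial_{x_i}\varphi$ into the $\partial_{x_i}\zeta\,[H]_h$ piece (bounded by $c(\zeta)E$ via \eqref{radon10} and the $\varepsilon$-independent $L^2$ bound on $\nabla\mathbf u$ from \eqref{navier7}) and the $\zeta\,[\Psi]_h$ piece (handled by self-adjointness of the mollifier followed by Cauchy--Schwarz), and treat $I_5$ by the $L^\infty$ bound on $H$. The only cosmetic difference is that the paper applies Cauchy--Schwarz in the full two-dimensional variable $x\in[-N,N]^2$ and then uses the $x_2$-independence of $\Psi$ to extract the factor $N^{1/2}$, whereas you integrate out $x_2$ first and apply Cauchy--Schwarz in $x_1$; the two are equivalent.
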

\begin{proof} It follows from formulae \eqref{radon9} that
\begin{equation}\label{radon27}
I_4= \int_\Pi\zeta \mathbb S_{11}(\mathbf u)\big[\Psi\big]_h dxdt+
\int_\Pi(\partial_{x_i}\zeta )\,\mathbb S_{i1}(\mathbf
u)\big[H\big]_h dxdt.
\end{equation}
Notice that  $[\zeta \mathbb S]_h$ is compactly supported in
$\Omega\times [0,T]$. Hence it is supported in the slab
$[-N,N]^2\times[0,T]$ such that $[-N,N]^2\supset\Omega$. Thus we get
\begin{equation*}\begin{split}
\Big|\int_\Pi \zeta \mathbb S_{11}\big[\Psi\big]_h dxdt\Big|=
\Big|\int_\Pi\big[\zeta \mathbb S_{11}\big]_h \Psi dxdt\Big|=
\Big|\int_0^T\int_{[-N,N]^2} \big[\zeta \mathbb S_{11}\big]_h \Psi dxdt\Big|\leq\\
\Big(\int_0^T\int_{[-N,N]^2} \big[\zeta \mathbb S_{11}\big]_h^2
dxdt\Big)^{1/2}
\Big(\int_0^T\int_{[-N,N]^2}  \Psi^2 dxdt\Big)^{1/2}\leq\\
\Big(\int_0^T\int_{[-N,N]^2} (\zeta \mathbb S_{11})^2
dxdt\Big)^{1/2} \Big(N\int_0^T\int_{[-N,N]}  \Psi^2
dx_1dt\Big)^{1/2}\leq\\ c(\zeta)N^{1/2} \Big(\int_{Q_T}
|\nabla\mathbf u|^2  dxdt\Big)^{1/2} \Big(\int_0^T\int_{\mathbb R}
\Psi^2 dx_1dt\Big)^{1/2}
\end{split}\end{equation*}
Since $N$ depends only on $\Omega$, these inequalities along with
estimate \eqref{navier7} imply
\begin{equation}\label{radon28}
\Big|\int_\Pi \zeta \mathbb S_{11}\big[\Psi\big]_h dxdt\Big|\leq
c(\zeta)E \Big(\int_0^T\int_{\mathbb R}  \Psi^2 dx_1dt\Big)^{1/2}
\end{equation}
We   also have
\begin{equation}\begin{split}\label{radon29}
\Big|\int_\Pi(\partial_{x_i}\zeta)\, \mathbb S_{i1}(\mathbf
u)\big[H\big]_h dxdt\Big| \leq c(\zeta)\| H\|_{L^\infty(\mathbb
R\times (0,T))} \int_{Q_T}|\nabla\mathbf u| dxdt \leq c(\zeta) E.
\end{split}\end{equation}
Inserting  \eqref{radon28} and \eqref{radon29} into \eqref{radon27}
we arrive at the first inequality in \eqref{radon26}. It remains to
notice that
$$
|I_5|\leq c \|f\|_{L^\infty(Q_T)}\| H\|_{L^\infty(\mathbb R\times
(0,T))}\int_\Pi \zeta\varrho\,dxdt \leq c(\zeta)E \int_{Q_T}
\varrho\,dxdt\leq c(\zeta) E.
$$
\end{proof}

\begin{lemma}\label{radonlemma8}
\begin{gather}\label{radon30}
    \frac{1}{2}\int_0^T\int_{\mathbb R}\Psi^2 dx_1dt+\int_0^T\int_{\mathbb R}(\Upsilon_h
    \Psi-\upsilon_h^2)\, dx_1dt
\leq c(\zeta)E.
\end{gather}
\end{lemma}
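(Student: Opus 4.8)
The statement \eqref{radon30} is essentially the bookkeeping conclusion of all the preceding lemmas. The plan is to start from the master inequality \eqref{radon17}, namely $I_1+I_2+I_3+I_4+I_5\leq c(\zeta)E$, and substitute the representations of the five terms obtained in Lemmas \ref{radonlemma4}--\ref{radonlemma7}. Writing $A=\int_0^T\int_{\mathbb R}\Psi^2\,dx_1dt$ for brevity, Lemma \ref{radonlemma4} gives $I_1=-\int_0^T\int_{\mathbb R}\upsilon_h^2\,dx_1dt+J_1$ with $|J_1|\leq c(\zeta)E$; Lemma \ref{radonlemma5} gives $I_2=\int_0^T\int_{\mathbb R}\Upsilon_h\Psi\,dx_1dt+J_2$ with $|J_2|\leq c(\zeta)E$; Lemma \ref{radonlemma6} gives $I_3\geq A+J_3$ with $|J_3|\leq c(\zeta,E)$; and Lemma \ref{radonlemma7} gives $|I_4|\leq c(\zeta)E+c(\zeta)E\,A^{1/2}$ and $|I_5|\leq c(\zeta)E$.

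Collecting these, from \eqref{radon17} we obtain
\[
A+\int_0^T\int_{\mathbb R}\big(\Upsilon_h\Psi-\upsilon_h^2\big)\,dx_1dt
\leq c(\zeta)E+c(\zeta)E\,A^{1/2},
\]
after moving the bounded remainders $J_1,J_2,J_3,I_5$ and the lower bound for $|I_4|$ to the right-hand side (using $|I_4|\ge$ its negative). The one genuine point is the absorption of the term $c(\zeta)E\,A^{1/2}$ into the left-hand side: by Young's inequality $c(\zeta)E\,A^{1/2}\leq \tfrac12 A+\tfrac12 c(\zeta)^2E^2$, so that
\[
\tfrac12 A+\int_0^T\int_{\mathbb R}\big(\Upsilon_h\Psi-\upsilon_h^2\big)\,dx_1dt
\leq c(\zeta)E+\tfrac12 c(\zeta)^2E^2\leq c(\zeta)E,
\]
which is exactly \eqref{radon30}, after relabelling the constant (all constants here depend only on $\zeta$ and on the data through $E$, and are independent of $\varepsilon$ and $h$, since each $J_i$ bound is).

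I do not expect any real obstacle here: the step is purely algebraic assembly plus one application of Young's inequality. The only things to be a little careful about are (i) keeping track of signs — $I_3$ and $|I_4|$ enter as inequalities, not equalities, so one must check that the direction of each inequality is compatible with moving terms across; (ii) confirming that the constant produced is still of the form $c(\zeta)E$ (with the squared term $c(\zeta)^2E^2$ absorbed into a generic $c(\zeta)E$ is acceptable since $E$ and $c(\zeta)$ are fixed once the data and $\zeta$ are fixed — though one may equally just write $c(\zeta,E)$ and later note it is uniform in $\varepsilon,h$); and (iii) noting that the inequality holds for every admissible mollification parameter $h>0$ with a bound independent of $h$, which is what will matter when passing to the limit $h\to0$ in the next step of the argument. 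None of these require additional computation beyond what the preceding lemmas already supply.
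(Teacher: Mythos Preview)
Your proposal is correct and follows essentially the same route as the paper: both start from the master inequality $I_1+\dots+I_5\le c(\zeta)E$, insert the representations/bounds from Lemmas \ref{radonlemma4}--\ref{radonlemma7}, arrive at $A+\int(\Upsilon_h\Psi-\upsilon_h^2)\le c(\zeta)E+c(\zeta)E\,A^{1/2}$, and absorb the $A^{1/2}$ term. You spell out the absorption via Young's inequality, which the paper simply calls ``obvious''; your remarks on signs, on the generic nature of the constant, and on uniformity in $h$ are all on point.
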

\begin{proof} Estimate \eqref{radon26} for $I_5$ and inequality \eqref{radon17} imply
$$
I_1+I_2+I_3+I_4\leq c(\zeta) E.
$$
From this and  Lemmas \ref{radonlemma4}, \ref{radonlemma5} we obtain
$$
I_3+\int_0^T\int_{\mathbb R}(\Upsilon_h
    \Psi-\upsilon_h^2)\, dx_1dt\leq c(\zeta)E+|I_4|
$$
Applying Lemmas \ref{radonlemma6} and \ref{radonlemma7} we arrive at
$$
\int_0^T \int_{\mathbb R} \Psi^2dx_1dt+\int_0^T\int_{\mathbb
R}(\Upsilon_h
    \Psi-\upsilon_h^2)\, dx_1dt\leq c(\zeta)E+c(\zeta)\,E\,\Big(\int_0^T\int_{\mathbb R}\Psi^2 dx_1dt\Big)^{1/2},
$$
which  obviously leads to  \eqref{radon30}.
\end{proof}
\begin{lemma}\label{radonlemma9} $\Upsilon_h
    \Psi-\upsilon_h^2\geq 0$ in $\Pi$.
\end{lemma}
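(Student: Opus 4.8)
The claim is the pointwise inequality $\Upsilon_h \Psi - \upsilon_h^2 \geq 0$, where, recalling the definitions,
\[
\Psi(x_1,t)=\int_{\mathbb R}\big[\zeta\varrho\big]_h\, dx_2,\quad
\upsilon_h(x_1,t)=\int_{\mathbb R}\big[\zeta\varrho u_1\big]_h\, dx_2,\quad
\Upsilon_h(x_1,t)=\int_{\mathbb R}\big[\zeta\varrho u_1^2\big]_h\, dx_2.
\]
The plan is to recognize this as a Cauchy--Schwarz inequality applied in the variable $x_2$ with respect to a nonnegative weight. The key observation is that the mollified quantities $[\zeta\varrho]_h$, $[\zeta\varrho u_1]_h$, $[\zeta\varrho u_1^2]_h$ are obtained by convolving $\zeta\varrho$, $\zeta\varrho u_1$, $\zeta\varrho u_1^2$ against the \emph{nonnegative} product kernel $h^{-2}\omega((x_1-y_1)/h)\omega((x_2-y_2)/h)$, and that $\zeta\varrho\geq 0$. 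Writing $K_h(z)=h^{-1}\omega(z/h)$, so that $[f]_h(x_1,x_2,t)=\int\int K_h(x_1-y_1)K_h(x_2-y_2)f(y_1,y_2,t)\,dy_1dy_2$, we may view each of the three mollified functions, integrated over $x_2$, as an integral against the single nonnegative measure $d\mu_{x_1,t}(y)=\zeta(y)\varrho(y,t)\,\big(\int_{\mathbb R}K_h(x_1-y_1)\,dx_2\big)\ldots$ — more precisely, after performing the $x_2$-integration (which integrates $K_h(x_2-y_2)$ to $1$), one gets
\[
\Psi=\int_{\mathbb R^2} K_h(x_1-y_1)\,\zeta\varrho\,dy,\quad
\upsilon_h=\int_{\mathbb R^2} K_h(x_1-y_1)\,\zeta\varrho\, u_1(y)\,dy,\quad
\Upsilon_h=\int_{\mathbb R^2} K_h(x_1-y_1)\,\zeta\varrho\, u_1(y)^2\,dy,
\]
all three being integrals of $1$, $u_1$, $u_1^2$ against the common nonnegative measure $d\mu(y)=K_h(x_1-y_1)\,\zeta(y)\varrho(y,t)\,dy$ on $\mathbb R^2$ (with $x_1,t$ fixed).

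Once this common structure is exposed, the inequality is immediate from Cauchy--Schwarz: for the nonnegative measure $\mu$,
\[
\upsilon_h^2=\Big(\int u_1\,d\mu\Big)^2\leq \Big(\int 1\,d\mu\Big)\Big(\int u_1^2\,d\mu\Big)=\Psi\cdot\Upsilon_h.
\]
Thus $\Upsilon_h\Psi-\upsilon_h^2\geq 0$ at every $(x_1,t)$, which is the assertion. One should check the harmless measurability/integrability points: $\zeta\varrho\in L^\infty(0,T;L^1(\mathbb R^2))$ and $\zeta\varrho u_1, \zeta\varrho u_1^2\in L^\infty(0,T;L^1)$ (the latter from the energy bound $\varrho|\mathbf u|^2\in L^\infty(0,T;L^1)$ via Corollaries~\ref{navier12}--\ref{navier13}), so all three integrals against $d\mu$ are finite for a.e.\ $t$ and the mollification makes them genuinely defined pointwise in $x_1$.

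The only mild subtlety — and the one step worth writing out carefully — is the reduction to a \emph{single} measure, i.e.\ checking that the $x_2$-integration really does collapse the two-dimensional mollifier to the one-dimensional kernel $K_h(x_1-y_1)$ uniformly for all three integrands, which relies only on Fubini (legitimate by the $L^1$ bounds) and $\int_{\mathbb R}K_h(x_2-y_2)\,dx_2=1$. I do not expect any genuine obstacle here; the lemma is essentially a packaging of Cauchy--Schwarz, and its role in the argument is to allow dropping the nonnegative cross term $\int_0^T\int_{\mathbb R}(\Upsilon_h\Psi-\upsilon_h^2)\,dx_1dt$ from the left side of \eqref{radon30}, yielding the clean bound $\tfrac12\int_0^T\int_{\mathbb R}\Psi^2\,dx_1dt\leq c(\zeta)E$ and hence \eqref{radon3}.
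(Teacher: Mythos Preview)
Your argument is correct and, in essence, the same Cauchy--Schwarz idea as the paper's, but you organize it more efficiently. The paper first applies Cauchy--Schwarz pointwise to the two-dimensional mollifier, obtaining $[\zeta\varrho|u_1|]_h\le\sqrt{[\zeta\varrho u_1^2]_h}\sqrt{[\zeta\varrho]_h}$, then passes through Young's inequality with a free parameter $\delta$, integrates in $x_2$, and finally optimizes $\delta=(\Psi/\Upsilon_h)^{1/2}$ to recover $|\upsilon_h|\le\sqrt{\Psi\Upsilon_h}$. You instead integrate out $x_2$ first (using $\int K_h(x_2-y_2)\,dx_2=1$), which collapses all three quantities to integrals of $1,u_1,u_1^2$ against the single nonnegative measure $d\mu(y)=K_h(x_1-y_1)\zeta(y)\varrho(y,t)\,dy$, after which one application of Cauchy--Schwarz finishes. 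Your route is shorter and avoids the Young/optimization detour; the paper's route has the minor advantage of making the pointwise-in-$(x_1,x_2)$ inequality $[\zeta\varrho|u_1|]_h^2\le[\zeta\varrho u_1^2]_h[\zeta\varrho]_h$ explicit, but that is not needed elsewhere.
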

\begin{proof} We begin with the observation that the inequality
$$
\big[ fg\big]_h^2\leq \big[ f^2\big]_h\, \big[ g^2\big]_h
$$
holds for all functions $f(x)$, $g(x)$ locally integrable with
square in $\mathbb R^2$. Setting $f=\sqrt{\zeta\varrho(\cdot,t)}$
and $g=\sqrt{\zeta\varrho}| u_1|(\cdot,t)$ we obtain for a.e. $x,t$
and all $\delta>0$,
\begin{equation*}\begin{split}
\big[ \zeta\varrho |u_1|\big]_h(x_1,x_2,t)\leq
\sqrt{\big[\zeta\varrho u_1^2\big]_h}(x_1,x_2,t)
\sqrt{\big[\zeta\varrho\big]_h}(x_1,x_2,t)\leq\\
\frac{1}{2}\delta\big[\zeta\varrho
u_1^2\big]_h(x_1,x_2,t)+\frac{1}{2}\delta^{-1}\big[\zeta\varrho\big]_h(x_1,x_2,t)
\end{split}\end{equation*}
Integrating both sides with respect to $x_2$ over $\mathbb R$ and
recalling formulae \eqref{radon11} and \eqref{radon22} for
$\upsilon_h$ and $\Upsilon_h$ we arrive at
\begin{equation*}\begin{split}
\upsilon_h(x_1,t)\leq \frac{1}{2}\delta
\Upsilon_h(x_1,t)+\frac{1}{2}\delta^{-1}\Psi(x_1,t).
\end{split}\end{equation*}
Recall that $\Upsilon_h$ and $\Psi$ are nonnegative. Setting
$\delta=(\Psi/\Upsilon_h)^{1/2}$ we obtain the desired inequality.
\end{proof}

We are now in a position to complete the proof of Theorem
\ref{radontheorem}. Introduce the function
\begin{equation*}
    \Phi_1(x_1,t)=\int_\mathbb R \zeta\varrho(x_1,x_2,t)\, dx_2\equiv \Phi(\mathbf e_1, x_1, t).
\end{equation*}
Recall that $\zeta \varrho$ is  supported in $Q_T$. It suffices to
prove that
\begin{equation}\label{radon34}
    \int_0^T\int_{\mathbb R} \Phi_1^2 \, dx_1dt\leq c(\zeta)E.
\end{equation}
By virtue of the energy estimate \eqref{navier7}, the function
$\Phi_1$ belongs to the class $L^2(0,T; \mathbb R)$. It is supported
in the  rectangular $[-N,N]\times [0,T]$ for every $N$ such that
$[-N,N]^2\supset\Omega$. It  obviously follows from this and
definition \eqref{radon7} of the mollifier that
\begin{equation*}
\Psi=\big[\Phi_1\big]^{(1)}_h, \text{~~where~~}
\big[\Phi_1\big]^{(1)}_h(x_1,t)=\frac{1}{h}\int_\mathbb R
\omega\Big(\frac{x_1-y_1}{h}\Big)\Phi_1(y_1,t)dy_1.
\end{equation*}
In other words, $\big[\Phi_1\big]^{(1)}_h$ is the mollifying of
$\Phi_1$ with respect to $x_1$. Lemmas \ref{radonlemma8} and
\ref{radonlemma9} imply the inequality
\begin{equation}\label{radon36}
 \int_0^T\int_{\mathbb R} \big(\,\big[\Phi_1\big]^{(1)}_h\,\big)^2\, dx_1dt\leq c(\zeta)E.
\end{equation}
Notice that $\big[\Phi_1\big]^{(1)}_h\to \Phi_1$ a.e. in $\mathbb
R\times (0,T)$. Letting $h\to 0$ in \eqref{radon36} and applying the
Fatou Theorem we arrive at \eqref{radon34}. $\square$

\section{ Momentum estimates}\label{momentum}
In this section we prove  auxiliary estimates for solutions
$(\varrho, \mathbf u)$ to regularized equations \eqref{navier1}.  We
start with the estimating of  norms of $\varrho$ and $\varrho
\mathbf u$ in negative Sobolev spaces.
\begin{proposition} \label{momentproposition}Let a solution $(\varrho, \mathbf u)$ to problem \eqref{navier1} meets all requirements of Proposition \ref{navierproposition}. Let $\zeta\in C^\infty_0(\mathbb R^2)$
be an arbitrary nonnegative compactly supported in $\Omega$ function
and $s>1/2$. Then
\begin{gather}\label{moment1}
    \|\zeta\varrho\|_{L^2(0,T; H^{-1/2}(\mathbb R^2))}\leq c(\zeta)E,\\
  \label{moment2}
  \|\zeta\varrho\mathbf u\|_{L^1(0,T; H^{-s}(\mathbb R^2))}\leq c(\zeta)c(s) E,
\end{gather}
where  $c(\zeta)$ depends only on $\zeta$, $c(s)$ depends only on
$s$, and $E$ is specified by Remark \ref{stokesremark}.
\end{proposition}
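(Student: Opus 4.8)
The plan is to deduce \eqref{moment1} immediately from the Radon transform estimate \eqref{radon2} of Theorem \ref{radontheorem} together with the Sobolev estimate of Lemma \ref{sobolevlemma1}, and to deduce \eqref{moment2} from \eqref{moment1} by means of the multiplier estimate of Lemma \ref{sobolevlemma2}. Both ingredients are already in hand, so the proof is essentially a matter of assembling them correctly while keeping track of the time integration.

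For \eqref{moment1}, fix $t\in(0,T)$ and apply Lemma \ref{sobolevlemma1} to the compactly supported function $g=\zeta\varrho(\cdot,t)\in L^2(\mathbb R^2)$ (it lies in $L^2$ for a.e.\ $t$ by the energy estimate \eqref{navier7}, since $\zeta$ is bounded and $\varrho\in L^\infty(0,T;L^\gamma(\Omega))$ with $\gamma\geq 6\geq 2$). This gives
\begin{equation*}
\|\zeta\varrho(\cdot,t)\|_{H^{-1/2}(\mathbb R^2)}^2\leq \frac{1}{4\pi}\int_{\mathbb S^1\times\mathbb R}\Phi(\boldsymbol\omega,\tau,t)^2\,d\boldsymbol\omega\,d\tau,
\end{equation*}
with $\Phi$ the Radon transform of $\zeta\varrho$. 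Integrating in $t$ over $(0,T)$ and invoking \eqref{radon2} for each direction $\boldsymbol\omega\in\mathbb S^1$ — noting that the bound $c(\zeta)E$ there is uniform in $\boldsymbol\omega$ — we get, after interchanging the order of integration (Fubini, legitimate since the integrand is nonnegative),
\begin{equation*}
\|\zeta\varrho\|_{L^2(0,T;H^{-1/2}(\mathbb R^2))}^2\leq \frac{1}{4\pi}\int_{\mathbb S^1}\Big(\int_0^T\int_{\mathbb R}\Phi(\boldsymbol\omega,\tau,t)^2\,d\tau\,dt\Big)\,d\boldsymbol\omega\leq \frac{1}{4\pi}\,|\mathbb S^1|\,c(\zeta)E,
\end{equation*}
which is \eqref{moment1} (absorbing the harmless factor $2\pi/4\pi$ into $c(\zeta)$).

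For \eqref{moment2}, fix $t$ again and write $\zeta\varrho\mathbf u=(\zeta\varrho)\,\mathbf u$ on each component. Since $\mathbf u(\cdot,t)\in W^{1,2}_0(\Omega)\subset H^1(\mathbb R^2)$ (extension by zero) for a.e.\ $t$, Lemma \ref{sobolevlemma2} applied componentwise with the chosen $s>1/2$ yields
\begin{equation*}
\|\zeta\varrho\mathbf u(\cdot,t)\|_{H^{-s}(\mathbb R^2)}\leq c(s)\,\|\zeta\varrho(\cdot,t)\|_{H^{-1/2}(\mathbb R^2)}\,\|\mathbf u(\cdot,t)\|_{H^1(\mathbb R^2)}.
\end{equation*}
Now integrate in $t$ and apply the Cauchy–Schwarz inequality in $t$, using that both $\|\zeta\varrho(\cdot,t)\|_{H^{-1/2}}$ and $\|\mathbf u(\cdot,t)\|_{H^1}$ lie in $L^2(0,T)$ — the former by \eqref{moment1}, the latter by the energy estimate \eqref{navier7}:
\begin{equation*}
\|\zeta\varrho\mathbf u\|_{L^1(0,T;H^{-s}(\mathbb R^2))}\leq c(s)\,\|\zeta\varrho\|_{L^2(0,T;H^{-1/2}(\mathbb R^2))}\,\|\mathbf u\|_{L^2(0,T;H^1(\mathbb R^2))}\leq c(s)\,c(\zeta)E\cdot E.
\end{equation*}
Since products of generic constants $E$ are again generic constants of the type in Remark \ref{stokesremark}, this gives \eqref{moment2}. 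There is no serious obstacle here; the only point requiring a little care is the measurability/integrability bookkeeping — that $t\mapsto\|\zeta\varrho(\cdot,t)\|_{H^{-1/2}}$ is measurable and square-integrable so that Fubini and Cauchy–Schwarz in $t$ are applicable — which follows from the pointwise-in-$t$ bounds above combined with \eqref{moment1} and \eqref{navier7}.
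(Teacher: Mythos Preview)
Your proof is correct and follows essentially the same route as the paper: for \eqref{moment1} you combine Lemma \ref{sobolevlemma1} pointwise in $t$ with the uniform-in-$\boldsymbol\omega$ bound of Theorem \ref{radontheorem} via Fubini, and for \eqref{moment2} you apply Lemma \ref{sobolevlemma2} pointwise in $t$ and then Cauchy--Schwarz in $t$ using \eqref{moment1} and the energy estimate \eqref{navier7}. The only cosmetic difference is that you track the constants a bit more explicitly (writing $c(\zeta)E\cdot E$ before absorbing it into a single generic $E$), which is harmless.
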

\begin{proof}Lemma \ref{sobolevlemma1} and Theorem \ref{radontheorem} imply the estimates
\begin{equation*}\begin{split}
    \int_0^T \|\zeta\varrho\|_{H^{-1/2}(\mathbb R^2)}^2\, dt\leq
\int_0^T\int_{\mathbb
S^1}\int_{-\infty}^{\infty}\Big\{\int\limits_{\boldsymbol \omega
\cdot x=\tau} \zeta\varrho\, dl\Big\}^2
d\tau d\boldsymbol\omega dt=\\
 \int_{\mathbb S^1}\Big\{\int_0^T\int_{-\infty}^{\infty}\Big\{\int\limits_{\boldsymbol \omega \cdot x=\tau} \zeta\varrho\, dl\Big\}^2
d\tau dt \Big\}d\boldsymbol\omega \leq c(\zeta)E\int_{\mathbb
S^1}d\boldsymbol\omega\leq c(\zeta) E,
\end{split}\end{equation*}
which yield \eqref{moment1}. Next,  Lemma \ref{sobolevlemma2}
implies the inequality
$$\|\zeta\varrho(t)\mathbf u(t)\|_{H^{-s}(\mathbb R^2)}\leq c(\zeta,s)\|
\zeta\varrho(t)\|_{H^{-1/2}(\mathbb R^2)}\|\mathbf
u(t)\|_{H^{1}(\mathbb R^2)}.
$$
From this, \eqref{moment1}, and estimate \eqref{navier7} we finally
obtain
\begin{equation*}\begin{split}
\|\zeta\varrho\mathbf u\|_{L^1(0,T;H^{-s}(\mathbb R^2))}\leq
c(\zeta,s)\| \zeta\varrho\|_{L^2(0,T;H^{-1/2}(\mathbb
R^2))}\|\mathbf u\|_{L^2(0,T;H^{1}(\mathbb R^2))} \leq c(\zeta,s) E.
\end{split}\end{equation*}
\end{proof}
\subsection{Cauchy-Riemann equations}
Further notation  $\nabla^\perp$ and $\text{rot}$ stands for the
differential operators
$$
\nabla^\perp f=(-\partial_{x_2}f, \partial_{x_1} f), \quad
\text{rot~}\mathbf w=\partial_{x_2}w_1-\partial_{x_1}w_2.
$$
Denote by
 $\mathbf F=(F_1, F_2)$ a solution to the inhomogeneous Cauchy-Riemann equation
\begin{equation}\label{moment3}
     \nabla F_1+\nabla^\perp F_2= \zeta\varrho \mathbf u\text{~~in~~} \Pi.
\end{equation}
It is easily seen that
\begin{equation}\label{riesz3}
    F_1=\text{div~} \Delta^{-1} (\zeta\varrho\mathbf u), \quad F_2=-\text{rot~} \Delta^{-1}
    (\zeta\varrho\mathbf u).
\end{equation}
The following two auxiliary lemmas give  $L^p$- estimates for a
vector function $\mathbf F$.
\begin{lemma}\label{momentlemma1} Under the assumptions of Proposition
\ref{momentproposition}, for every positive $\delta$ and $R$, there
is a constant $c(\delta, \zeta, R)$ such that
\begin{equation}\label{moment4}
    \|\mathbf F\|_{L^4(0,T; L^{\frac{8}{3+\delta}}(B_R))}\leq c(\delta, \zeta, R)E.
\end{equation}
\end{lemma}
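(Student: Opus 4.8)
The plan is to read everything off the representation \eqref{riesz3}: it exhibits $\mathbf F$ as the image of the momentum density $\zeta\varrho\mathbf u$ under the order $(-1)$ operator $\nabla\Delta^{-1}$, and the kernel behind \eqref{riesz1} gives the pointwise bound $|\mathbf F(x,t)|\le c\int_{\mathbb R^2}|x-y|^{-1}\,\zeta\varrho|\mathbf u|(y,t)\,dy$, i.e.\ $|\mathbf F|$ is dominated by the Riesz potential of order one of $\zeta\varrho|\mathbf u|$. Thus the lemma reduces to mapping properties of Riesz potentials applied to good space--time bounds for $\zeta\varrho\mathbf u$, all of which must be uniform in $\varepsilon$.

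First I would record two such bounds. One is already available: Proposition \ref{momentproposition} gives $\zeta\varrho\mathbf u\in L^1(0,T;H^{-s}(\mathbb R^2))$ for every $s>1/2$, and since $\nabla\Delta^{-1}$ maps $H^{-s}$ into $H^{1-s}$ and $H^{1-s}(\mathbb R^2)\hookrightarrow L^q$ for $q<2/s$, one gets $\mathbf F\in L^1(0,T;L^q(B_R))$ for every $q<4$. The second bound is $\zeta\varrho\mathbf u\in L^2(0,T;L^1(\mathbb R^2))$ with norm $\le c(\zeta)E$: it follows from the Orlicz--H\"older inequality $\|fg\|_{L^1}\le c\,\|f\|_{L\log L}\|g\|_{\exp L}$, the energy bound \eqref{navier7} (which gives $\|\varrho(t)\|_{L\log L(\Omega)}\le cE$), and the Moser--Trudinger embedding $W^{1,2}_0(\Omega)\hookrightarrow\exp L^2$ (which gives $\|\mathbf u(t)\|_{\exp L}\le c\|\nabla\mathbf u(t)\|_{L^2}\in L^2(0,T)$). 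Feeding this into the endpoint Hardy--Littlewood--Sobolev estimate $I_1\colon L^1(\mathbb R^2)\to L^{2,\infty}(\mathbb R^2)$ gives $\mathbf F\in L^2(0,T;L^q(B_R))$ for $q<2$, and, from $\zeta\varrho\mathbf u\in L^\infty(0,T;L^1)$, also $\mathbf F\in L^\infty(0,T;L^q(B_R))$ for $q<2$. The passage from the full-space $\Delta^{-1}$ to the fixed ball $B_R$ is harmless because $\zeta\varrho\mathbf u$ is supported in $\overline\Omega$.

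It then remains to interpolate so as to land at the pair $(4,\,8/(3+\delta))$. The way to do this is to interpolate $\zeta\varrho\mathbf u$ in the complex scale between $L^1(0,T;H^{-1/2-\eta})$ and $L^2(0,T;L^1)$: this places $\zeta\varrho\mathbf u$ in $L^{p}(0,T;W^{-\sigma,r}(\mathbb R^2))$ with $r<2$ and $\sigma>0$ small, whence $\mathbf F=\nabla\Delta^{-1}(\zeta\varrho\mathbf u)\in L^{p}(0,T;W^{1-\sigma,r}(\mathbb R^2))$; the Sobolev embedding $W^{1-\sigma,r}(\mathbb R^2)\hookrightarrow L^{8/(3+\delta)}$, valid exactly when $\tfrac1r-\tfrac{1-\sigma}2=\tfrac{3+\delta}8$, then yields the claimed control on $B_R$.

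The point I expect to be the main obstacle is precisely this last calibration of exponents. The crude mixed-norm interpolation of $\mathbf F\in L^1_tL^{4-}_x$ with $\mathbf F\in L^\infty_tL^{2-}_x$ only reaches $L^4_tL^{16/7-}_x$, which falls short of $8/3$; the gain must come from genuinely using the \emph{square}-time-integrability $\zeta\varrho\mathbf u\in L^2(0,T;L^1)$ together with the full one-derivative smoothing of $\nabla\Delta^{-1}$ in the fractional Sobolev scale, not merely the $L^{2,\infty}$ endpoint. Should even this not close the gap, one would decompose $\mathbf F$ along the momentum balance \eqref{navier8}: the viscous-stress part $\int_0^t\nabla\Delta^{-1}\mathrm{div}(\zeta\mathbb S)\,d\tau$ lies in $C([0,T];L^2(\mathbb R^2))$ since $\mathbb S\in L^2(Q_T)$, the initial-data and $\nabla\zeta$-commutator parts lie in $L^\infty(0,T;L^q(B_R))$ for all $q<\infty$, and only the pressure and convective parts are left to the potential estimates above.
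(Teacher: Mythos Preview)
Your interpolation strategy cannot reach the target exponents, and you already sense this when you note that the crude interpolation lands at $L^4_tL^{16/7-}_x$. The refinement you propose --- interpolating $\zeta\varrho\mathbf u$ between $L^1(0,T;H^{-1/2-\eta})$ and $L^2(0,T;L^1)$ --- can only yield time exponents in $[1,2]$, never $4$; and replacing $L^2_tL^1_x$ by the stronger $L^\infty_tL^1_x$ still loses, because $L^1(\mathbb R^2)\hookrightarrow H^{-1-\eta}(\mathbb R^2)$ is wasteful and, after applying $\nabla\Delta^{-1}$ and embedding, you are back to $L^4_tL^{16/7-}_x$. The fallback via the momentum balance is not pertinent here either: $\mathbf F$ is defined pointwise in $t$ as $\nabla\Delta^{-1}(\zeta\varrho\mathbf u)(\cdot,t)$, not as a time integral, so there is nothing to decompose that way.

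The idea you are missing is a pointwise Cauchy--Schwarz splitting \emph{inside} the Riesz potential. Writing $\zeta\varrho|\mathbf u|=\sqrt{\zeta\varrho}\cdot\sqrt{\zeta\varrho}\,|\mathbf u|$ in the kernel representation gives
\[
|\mathbf F(x,t)|\le c\,\sqrt{\mathcal L(x,t)}\,\sqrt{\mathcal Q(x,t)},\qquad
\mathcal L=\int\frac{\zeta\varrho|\mathbf u|^2}{|x-y|}\,dy,\quad
\mathcal Q=\int\frac{\zeta\varrho}{|x-y|}\,dy.
\]
Now the two factors are estimated from \emph{different} a~priori bounds: the kinetic energy bound $\varrho|\mathbf u|^2\in L^\infty(0,T;L^1)$ yields $\mathcal L\in L^\infty(0,T;L^{2/(1+\delta)}(B_R))$, while the Radon-transform estimate \eqref{moment1} on the density alone, $\zeta\varrho\in L^2(0,T;H^{-1/2})$, gives (via the Bessel kernel and $H^{1/2}\hookrightarrow L^4$) $\mathcal Q\in L^2(0,T;L^4(B_R))$. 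Taking square roots doubles the exponents, so $\sqrt{\mathcal L}\in L^\infty_tL^{4/(1+\delta)}_x$ and $\sqrt{\mathcal Q}\in L^4_tL^8_x$; H\"older then lands exactly at $L^4_tL^{8/(3+2\delta)}_x$. The passage from $L^2_t$ to $L^4_t$ comes from this nonlinear square-root step, which no linear interpolation of $\nabla\Delta^{-1}$ acting on $\zeta\varrho\mathbf u$ can reproduce. Note in particular that it is \eqref{moment1}, not \eqref{moment2}, that drives the argument.
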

\begin{proof} Fix an arbitrary positive $\delta$ and $R$. Without loss of generality we may assume that $\delta<1$ and $B_R\supset \Omega$. Integral representation \eqref{riesz1} for the operator $\partial_x\Delta^{-1}$
and formulae \eqref{riesz3} for solutions to inhomogeneous
Cauchy-Riemann equations imply the inequalities
\begin{equation*}
|\mathbf F(x,t)|\leq c\int_\Omega|x-y|^{-1}\zeta\varrho |\mathbf
u|(y,t)\,dy\leq \frac{c}{2}b(x,t)\mathcal L(x,t) +\frac{c}{2}
b^{-1}(x,t) \mathcal Q(x,t),
\end{equation*}
where
$$
\mathcal L(x,t)=\int_\Omega|x-y|^{-1}\zeta\varrho |\mathbf
u|^2(y,t)\,dy,\quad \mathcal
Q(x,t)=\int_\Omega|x-y|^{-1}\zeta\varrho\,dy,
$$
$b$ is an arbitrary positive function. Notice that if $\mathcal
L(x,t)$ or $\mathcal Q(x,t)$ vanishes at least at one point $(x,t)$,
then $\mathbf F(\cdot,t)$ vanishes in $\mathbb R^2$. In opposite
case we can take $b=\sqrt{\mathcal Q/\mathcal L}$. Thus we get
\begin{equation}\label{moment5}
|\mathbf F(x,t)|\leq c\sqrt{\mathcal L(x,t)}\, \sqrt{\mathcal
Q(x,t)} \text{~~a.e. in~~} \mathbb R^2\times (0,T)
\end{equation}
Now our task is to estimate $\mathcal L$ and $\mathcal Q$. We have
$$
\mathcal L(x,t)=\int_{B_R} \big(\zeta\varrho |\mathbf
u|^2\big)^{\frac{1+\delta}{2}}\,|x-y|^{-1-\delta+\alpha}\,
\big(\zeta\varrho |\mathbf
u|^2\big)^{\frac{1-\delta}{2}}\,|x-y|^\alpha\, dy,
$$
where $\alpha =\delta/2>0$. It follows that
$$
\mathcal L(x,t)\leq c(R)\int_{B_R} \big(\zeta\varrho |\mathbf
u|^2\big)^{\frac{1+\delta}{2}}\,|x-y|^{-1-\delta+\alpha}
\big(\zeta\varrho |\mathbf u|^2\big)^{\frac{1-\delta}{2}}\,
dy\text{~~for~~} x\in B_R.
$$
Applying the H\"{o}lder inequality we obtain
$$
\mathcal L(x,t)\leq c\Big(\int_{B_R} \zeta\varrho |\mathbf u|^2
|x-y|^{-2+\frac{2\alpha}{1+\delta}}dy
\Big)^{\frac{1+\delta}{2}}\Big(\int_{B_R} \zeta\varrho |\mathbf
u|^2\,dy \Big)^{\frac{1-\delta}{2}}.
$$
 This leads to the inequality
\begin{equation*}\begin{split}
\int_{B_R} \mathcal L(x,t)^{\frac{2}{1+\delta}}dx\leq
\Big(\int_{B_R} \zeta\varrho |\mathbf u|^2\,dy
\Big)^{\frac{1-\delta}{1+\delta}}\int_{B_R}\int_{B_R}\zeta\varrho |\mathbf u|^2(y,t)|x-y|^{-2+\frac{2\alpha}{1+\delta}}\, dxdy\leq \\
c\Big(\int_{B_R} \zeta\varrho |\mathbf u|^2(y,t)\,dy
\Big)^{\frac{2}{1+\delta}}\leq c\big(\|\varrho |\mathbf
u|^2(t)\|_{L^1(\Omega)}\big)^{2/(1+\delta)}.
\end{split}\end{equation*}
Recalling the energy estimate \eqref{navier7} we finally obtain
\begin{equation}\label{moment7}
\|\mathcal L\|_{L^\infty(0,T;L^{\frac{2}{1+\delta}}(B_R))}\leq
c(R,\zeta) \|\varrho |\mathbf u|^2\|_{L^\infty(0,T;L^1(\Omega))}\leq
c(\zeta, R)E.
\end{equation}
Now our task is to estimate $\mathcal Q$. Notice that $|x-y|\leq 2
R$ for all $x,y\in B_R$.  It follows from this and \eqref{sobolev4}
that $|x-y|^{-1}\leq c(R) G_1(x-y)$ for all $x,y\in B_R$, where
$G_1$ is the Bessel kernel. We thus get
\begin{equation*}
    \mathcal Q(x,t)\leq c(R)
    \int_{\mathbb R^2}G_1(x-y)\zeta\varrho(y,t)\, dy:= G(x,t).
\end{equation*}
Estimate  \eqref{sobolev5}  for the Bessel kernel and
 inequality \eqref{moment1} yield
$$\|G\|_{L^2(0,T; H^{1/2}(\mathbb R^2)}\leq \|\zeta\varrho\|_{L(0,T; H^{-1/2}(\mathbb R^2))}
\leq c(\zeta)E.
$$Since the embedding $H^{1/2}(\mathbb R^2)\hookrightarrow L^4(\mathbb R^2)$
is bounded, see \cite{adams}, thm. 7.57, we obtain
\begin{equation*}
    \|\mathcal Q\|_{L^2(0,T; L^4(B_R))}\leq c(R)\|G\|_{L^2(0,T; L^4(B_R))}
    \leq c(R)\|G\|_{L^2(0,T; H^{1/2}(\mathbb R^2))}\leq c(R, \zeta)E.
\end{equation*}
Combining these inequalities with \eqref{moment7}  we arrive at the
estimates
\begin{equation}\label{moment10}
    \|\sqrt{\mathcal L}\|_{L^\infty(0,T;L^{\frac{4}{1+\delta}}(B_R))}\leq
    c(R,\zeta)E, \quad \|\sqrt{\mathcal Q}\|_{L^4(0,T;L^{8}(B_R))}\leq
    c(R,\zeta)E.
\end{equation}
Next, the H\"{o}lder inequality implies that
$$
\|\sqrt{\mathcal L}\sqrt{\mathcal Q}\|_{L^\tau(0,T;L^{r}(B_R))}\leq
\|\sqrt{\mathcal L}\|_{L^{\tau_1}(0,T;L^{r_1}(B_R))}
\|\sqrt{\mathcal Q}\|_{L^{\tau_2}(0,T;L^{r_2}(B_R))}
$$
for all $\tau,r, \tau_i,r_i \in[1,\infty]$ satisfying the condition
$$
\tau^{-1}=\tau_1^{-1}+\tau_2^{-1}, \quad r^{-1}= r_1^{-1}+r_2^{-1}.
$$
Setting $\tau=\tau_2=4$, $\tau_1=\infty$, $r_1=4/(1+\delta)$,
$r_2=8$, $r=8/(3+2\delta)$, and recalling inequalities
\eqref{moment10} we obtain
$$
\|\sqrt{\mathcal L}\sqrt{\mathcal
Q}\|_{L^4(0,T;L^{8/(3+2\delta)}(B_R))}\leq c(\zeta,\delta, R) E.
$$
Combining this result with \eqref{moment5} we arrive at
\eqref{moment4}.
\end{proof}

\begin{lemma}\label{momentlemma2}. Under the assumptions of
Proposition \ref{momentproposition}, for every positive $\nu<3$ and
$R$, there is a constant $c(\nu, \zeta, R)$ such that
\begin{equation}\label{moment11}
    \|\mathbf F\|_{L^2(0,T; L^{3-\nu}(B_R))}
    \leq c(\nu, \zeta, R)E.
\end{equation}
\end{lemma}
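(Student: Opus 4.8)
The estimate of Lemma~\ref{momentlemma1} reaches only the spatial exponent $8/3<3$, so by itself it cannot yield \eqref{moment11}. The plan is to complement it with a second estimate that is much stronger in the space variable but only of order one in time, namely $\mathbf F\in L^1(0,T;L^p(B_R))$ for every $p<4$, and then to recover \eqref{moment11} by interpolating the two. The exponents turn out to be tuned so that the interpolation lands exactly on the endpoint $L^2(0,T;L^3(B_R))$, which is why every $\nu>0$ is admissible.

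For the second estimate I would argue slice by slice in time. Assume without loss of generality $B_R\supset\Omega$. As observed in the proof of Lemma~\ref{momentlemma1}, formulae \eqref{riesz1}--\eqref{riesz3} give the pointwise bound $|\mathbf F(x,t)|\le c\int_\Omega|x-y|^{-1}\zeta\varrho|\mathbf u|(y,t)\,dy$; since $|x-y|\le 2R$ when $x\in B_R$ and $y\in\Omega$, inequality \eqref{sobolev4} yields $|\mathbf F(x,t)|\le c(R)\,\bigl(G_1*(\zeta\varrho|\mathbf u|(\cdot,t))\bigr)(x)$ on $B_R$, where $G_1$ is the Bessel kernel. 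Recalling that $\mathbf u$ is extended by $0$ and that $t\mapsto|t|$ is $1$-Lipschitz, one has $|\mathbf u|\in H^1(\mathbb R^2)$ with $\|\,|\mathbf u|\,\|_{H^1}\le c\|\mathbf u\|_{H^1}$, so Lemma~\ref{sobolevlemma2} gives $\|\zeta\varrho(t)\,|\mathbf u(t)|\|_{H^{-s}}\le c(s)\|\zeta\varrho(t)\|_{H^{-1/2}}\|\mathbf u(t)\|_{H^1}$ for any $s>1/2$. Combining this with the identity \eqref{sobolev5}, which gives $\|G_1*(\zeta\varrho|\mathbf u|(\cdot,t))\|_{H^{1-s}}=\|\zeta\varrho(t)|\mathbf u(t)|\|_{H^{-s}}$, and with the two-dimensional Sobolev embedding $H^{1-s}(\mathbb R^2)\hookrightarrow L^{2/s}(\mathbb R^2)$ (valid for $1/2<s<1$, see \cite{adams}), I obtain $\|\mathbf F(\cdot,t)\|_{L^{2/s}(B_R)}\le c(s,R)\|\zeta\varrho(t)\|_{H^{-1/2}}\|\mathbf u(t)\|_{H^1}$. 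Integrating over $(0,T)$ and using Cauchy--Schwarz together with \eqref{moment1} and the energy estimate \eqref{navier7} then gives $\|\mathbf F\|_{L^1(0,T;L^{2/s}(B_R))}\le c(s,\zeta,R)E$; since $2/s$ runs over $(2,4)$ as $s$ runs over $(1/2,1)$ and $B_R$ is bounded, this delivers $\|\mathbf F\|_{L^1(0,T;L^p(B_R))}\le c(p,\zeta,R)E$ for every $p<4$.

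It then remains to interpolate. For $\theta\in[0,1]$ the elementary mixed-norm inequality
\[
\|\mathbf F\|_{L^{p_\theta}(0,T;L^{q_\theta}(B_R))}\le\|\mathbf F\|_{L^{1}(0,T;L^{p}(B_R))}^{1-\theta}\,\|\mathbf F\|_{L^{4}(0,T;L^{q}(B_R))}^{\theta},
\]
valid whenever $1/p_\theta=(1-\theta)+\theta/4$ and $1/q_\theta=(1-\theta)/p+\theta/q$ (it follows from Hölder in $x$ followed by Hölder in $t$), applied with $\theta=2/3$ gives $p_\theta=2$ and $1/q_\theta=1/(3p)+2/(3q)$. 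As $p\uparrow4$ and $q\uparrow8/3$ this tends to $1/12+1/4=1/3$, so $q_\theta\uparrow3$; hence, given $\nu\in(0,3)$, I may fix $p<4$ and $q<8/3$ with $q_\theta\ge3-\nu$, whereupon $L^{q_\theta}(B_R)\hookrightarrow L^{3-\nu}(B_R)$ and, inserting Lemma~\ref{momentlemma1} for the second factor and the estimate of the preceding paragraph for the first, $\|\mathbf F\|_{L^2(0,T;L^{3-\nu}(B_R))}\le c(R)\|\mathbf F\|_{L^2(0,T;L^{q_\theta}(B_R))}\le c(\nu,\zeta,R)E$, which is \eqref{moment11}. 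The only step where anything beyond bookkeeping happens is the $L^1_tL^p_x$ bound of the second paragraph; the two small points to verify there are that $|\mathbf u|\in H^1(\mathbb R^2)$ (so that Lemma~\ref{sobolevlemma2} applies to $\zeta\varrho|\mathbf u|$) and that the integral representation \eqref{riesz1} is available for the Cauchy--Riemann solution operators in \eqref{riesz3} --- both of which were already used in the proof of Lemma~\ref{momentlemma1}.
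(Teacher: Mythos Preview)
Your proof is correct and follows essentially the same route as the paper: first derive an $L^1(0,T;L^p(B_R))$ bound for every $p<4$ by controlling $G_1*(\zeta\varrho|\mathbf u|)$ in $H^{1-s}$ via Lemma~\ref{sobolevlemma2} and Sobolev embedding (the paper packages this step as an appeal to \eqref{moment2}), then interpolate against Lemma~\ref{momentlemma1} with weight $\theta=2/3$ to land at $L^2(0,T;L^{3-\nu}(B_R))$. The only cosmetic difference is that you make explicit the point $|\mathbf u|\in H^1(\mathbb R^2)$, which the paper leaves implicit when invoking \eqref{moment2} for the scalar quantity $\zeta\varrho|\mathbf u|$.
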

\begin{proof} Assume that $\Omega\subset B_R$. It follows from
\eqref{sobolev4} that $|x-y|^{-1}\leq c(R) G_1(x-y)$ for all $x,
y\in B_R$. Thus we get
\begin{equation}\label{moment12}
|\mathbf F(x,t)|\leq c\int_{\mathbb R^2} G_1(x-y)\zeta\varrho
|\mathbf u|(y,t)\,dy:= M(x,t)\text{~~for all~~} x\in B_R.
\end{equation}
Now choose an arbitrary  $\mu\in (0,1/2)$. It follows from
\eqref{sobolev5} that
$$
\|M(t)\|_{H^{1/2-\mu}(\mathbb R^2)}\leq \|\zeta\varrho |\mathbf
u|(t)\|_{H^{-1/2-\mu}(\mathbb R^2)}.
$$
Applying inequality \eqref{moment2} we obtain
$$
\|M\|_{L^1(0,T;H^{1/2-\mu}(\mathbb R^2))}\leq c(\mu,\zeta)E.
$$
Since the embedding $H^{1/2-\mu}(\mathbb R^2)\hookrightarrow
L^{\frac{4}{1+2\mu}}(\mathbb R^2)$ is bounded,   \cite{adams}, thm.
7.57, we get
$$
\|M\|_{L^1(0,T;L^{\frac{4}{1+2\mu}}(\mathbb R^2))}\leq
c(\mu,\zeta)E.
$$
Combining this result with \eqref{moment12} we arrive at
\begin{equation}\label{moment13}
\|\mathbf F\|_{L^1(0,T;L^{\frac{4}{1+2\mu}}(B_R))}\leq
c(\mu,\zeta)E.
\end{equation}
Next notice that by the interpolation inequality,
$$
\|\mathbf F\|_{L^r(0,T; L^s(B_R))}\leq \|\mathbf F\|_{L^4(0,T;
L^{\frac{8}{3+\delta}}(B_R))}^\alpha\|\mathbf F\|_{L^1(0,T;
L^{\frac{4}{1+2\mu}}(B_R))}^{1-\alpha}
$$
holds for all $\alpha\in (0,1)$ and
$$
r^{-1}=\frac{\alpha}{4}+\frac{1-\alpha}{1}, \quad s^{-1}=
\frac{3+\delta}{8} \alpha+\frac{1+2\mu}{4} (1-\alpha).
$$
Setting $\alpha=2/3$ and recalling inequalities \eqref{moment4},
\eqref{moment13} we obtain
$$
\|\mathbf F\|_{L^2(0,T; L^{\frac{12}{4+\mu+\delta}}(B_R))}\leq
c(\zeta, \delta, \mu, R).
$$
Choosing $\mu $ and $\delta$ so small that $3-\nu<12/(4+\mu+\delta)$
we finally arrive at \eqref{moment11}.

\end{proof}

\section{$L^p$ estimates}\label{lpestimate}
In this section we investigate properties of   solutions $(\varrho,
\mathbf u)$ to regularized problem \eqref{navier1} and prove that
the pressure function $p(\varrho)$ is locally  integrable with an
exponent greater than $1$. The corresponding result is given by the
following

\begin{theorem}\label{lebegtheorem}
Let a solution $(\varrho, \mathbf u)$ to problem \eqref{navier1}
meets all requirements of Proposition \ref{navierproposition}. Let
$\zeta\in C^\infty_0(\mathbb R^2)$ be an arbitrary nonnegative
compactly supported in $\Omega$ function and $\lambda\in (0,1/6)$.
Then
\begin{gather}\label{lebeg1}
    \int_{Q_T}\zeta^2 p(\varrho)\varrho^\lambda\, dxdt\leq c(\zeta, \lambda)E,
\end{gather}
where  $c(\zeta,\lambda)$ depends only on $\zeta$ and $\lambda$.
\end{theorem}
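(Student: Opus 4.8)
The strategy is the classical "multiply the momentum equation by a test function built from $\Delta^{-1}$ of the density" (Bogovskii/pressure–estimate technique), but carried out locally with the cut-off $\zeta$ and using the improved Radon-transform bounds obtained in Theorem \ref{radontheorem} and Proposition \ref{momentproposition}. First I would fix $\lambda\in(0,1/6)$ and introduce the test vector field $\boldsymbol{\xi}=\zeta^2\,\mathbf{A}[\zeta\varrho^{\lambda}]$, where $\mathbf{A}=(A_1,A_2)=\nabla\Delta^{-1}$, so that formally $\div\boldsymbol{\xi}$ reproduces $\zeta^3\varrho^{\lambda}$ plus lower-order terms involving $\nabla\zeta$. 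Because the analysis is for the regularized system \eqref{navier1}, $\varrho$ is smooth enough and all the manipulations (in particular using the renormalized continuity equation \eqref{navier10} with $\varphi(\varrho)=\varrho^{\lambda}$, which satisfies the growth condition \eqref{navier11} since $\lambda<1$) are legitimate. Substituting $\boldsymbol{\xi}$ into the momentum identity \eqref{navier8}, the pressure term produces the main quantity $\int_{Q_T}\zeta^3 p(\varrho)\varrho^{\lambda}\,dxdt$, and the whole job is to bound every other term by $c(\zeta,\lambda)E$; an easy covering/partition-of-unity argument then upgrades $\zeta^3$ to $\zeta^2$ (or one simply starts from $\zeta$ replaced by $\zeta^{2/3}$-type cut-offs), giving \eqref{lebeg1}.

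Next I would group the remaining terms and estimate them one by one. The time-derivative term $\int\varrho\mathbf{u}\cdot\partial_t\boldsymbol{\xi}$ is handled by using the continuity/renormalized equation to rewrite $\partial_t\mathbf{A}[\zeta\varrho^{\lambda}]$; here the key input is that $\zeta\varrho\mathbf{u}\in L^{1}(0,T;H^{-s}(\mathbb R^2))$ from \eqref{moment2}, together with the mapping properties of $\mathbf{A}$, and that $\varrho^{\lambda}\varrho|\mathbf u|$, $\varrho^{\lambda}\varrho\,\div\mathbf u$ type expressions are integrable by the energy bound \eqref{navier7} and Corollary \ref{navier12}/\ref{navier13}. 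The convective term $\int\varrho\mathbf{u}\otimes\mathbf{u}:\nabla\boldsymbol{\xi}$ is controlled by Lemma \ref{momentlemma1} (the $L^4(0,T;L^{8/(3+\delta)})$ bound on $\mathbf F$-type potentials) against $\varrho|\mathbf u|^2\in L^2(0,T;L^{\tau})$, $\tau<2\gamma/(\gamma+1)$, from Corollary \ref{navier13}; the viscous term $\int\mathbb S(\mathbf u):\nabla\boldsymbol{\xi}$ against $\nabla\mathbf u\in L^2(Q_T)$ requires $\nabla\mathbf{A}[\zeta\varrho^{\lambda}]=$ (Riesz transforms of $\zeta\varrho^{\lambda}$) $\in L^2(Q_T)$, which is exactly where the $L^{1+\lambda}$-integrability of $\varrho$ must be bootstrapped — see the obstacle below. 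The forcing term $\int\varrho\mathbf f\cdot\boldsymbol\xi$ and the initial-data term are immediate from $\mathbf f\in L^\infty$, $\varrho_0\mathbf u_0\in L^1$, and the uniform $L^\infty(0,T;L^1)$ bound on $\varrho$ (hence on $\varrho^{\lambda}\le 1+\varrho$).

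The main obstacle — and the reason for the precise threshold $1/6$ — is a self-improving (bootstrap) estimate on $\varrho$. The viscous and convective terms involve $\mathbf{A}[\zeta\varrho^{\lambda}]$ and its gradient in $L^{2}$, but a priori we only know $\varrho\in L^\infty(0,T;L^1)$ plus the $H^{-1/2}$ and local $L^{1+\mu}$ ($\mu<1/6$, from \eqref{19}) information that Theorem \ref{stokes16} advertises; closing the loop means choosing $\lambda$ so that the right-hand side $\int\zeta^2 p(\varrho)\varrho^{\lambda}\ge \varepsilon\int\zeta^2\varrho^{\gamma+\lambda}$ dominates, while every term on the left is finite under the already-established $L^{1+\lambda'}$ bound for some $\lambda'<\lambda$. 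The computation of the exponents — feeding $\varrho^{\lambda}$ through the Riesz/Bessel-kernel mapping estimates of Section \ref{preliminaries} and the potential bounds of Lemmas \ref{momentlemma1}–\ref{momentlemma2}, and balancing against the energy exponents — is what forces $\lambda<1/6$. I would therefore run the argument first for very small $\lambda$ (where all terms are trivially finite because $\varrho^{\lambda}$ is essentially bounded in the relevant norms by $1+\varrho$), obtain a uniform-in-$\varepsilon$ bound on $\int_{Q_T}\zeta^2\varrho^{1+\lambda}$, and then iterate, each step enlarging the admissible exponent, until the open interval $(0,1/6)$ is exhausted; uniformity in $\varepsilon$ throughout is guaranteed because all constants come from \eqref{navier7}, Corollaries \ref{navier12}–\ref{navier13}, Theorem \ref{radontheorem} and Proposition \ref{momentproposition}, all of which are $\varepsilon$-independent.
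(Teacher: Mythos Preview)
Your overall framework---test with $\boldsymbol{\xi}=\zeta\cdot\nabla\Delta^{-1}[\zeta\psi(\varrho)]$ and read off $\int \zeta^2 p(\varrho)\varrho^\lambda$ from the pressure term---matches the paper. But two points in your plan are genuinely wrong and would prevent the argument from closing.

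First, the bootstrap is a red herring, and the viscous term is not the obstacle. Since $\varrho\in L^\infty(0,T;L^1)$, one has $\varrho^\lambda\in L^\infty(0,T;L^{1/\lambda})$ immediately, hence (by Riesz boundedness) $\nabla\mathbf A[\zeta\varrho^\lambda]\in L^\infty(0,T;L^{1/\lambda})\subset L^2(Q_T)$ for every $\lambda<1/2$. The threshold $\lambda<1/6$ does \emph{not} come from an iteration; it comes from a single H\"older balance in the convective term below: $\mathbf F\in L^2(0,T;L^{3-\nu})$ (Lemma~\ref{momentlemma2}), $\nabla\mathbf u\in L^2(0,T;L^2)$, and $\varrho^\lambda\in L^\infty(0,T;L^{1/\lambda})$ combine precisely when $(3-\nu)^{-1}+1/2+\lambda=1$, i.e.\ $\lambda<1/6$.

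Second, and more seriously, your treatment of the convective term relies on Corollary~\ref{navier13} (the $L^2(0,T;L^\tau)$ bound for $\varrho|\mathbf u|^2$), and you assert at the end that this is $\varepsilon$-independent. It is not: the constants in Corollaries~\ref{navier12}--\ref{navier13} are explicitly $c(E,\varepsilon)$, so your estimate would blow up as $\varepsilon\to 0$. The paper avoids this by \emph{not} estimating $\int\varrho\mathbf u\otimes\mathbf u:\nabla\boldsymbol\xi$ directly. Instead it writes $\zeta\varrho\mathbf u=\nabla F_1+\nabla^\perp F_2$ via the Cauchy--Riemann system \eqref{moment3}, integrates by parts to move the derivative off $\mathbf F$, and reduces the convective term (and, by the same device, the time-derivative term after invoking the renormalized equation~\eqref{lebeg9}) to integrals of the type
\[
\int_{Q_T}|\mathbf F|\,|\nabla\mathbf u|\,\big(|g|+|\nabla\mathbf H|\big)\,dxdt,
\]
which are controlled by the $\varepsilon$-independent Lemmas~\ref{momentlemma1}--\ref{momentlemma2} together with the elementary $L^{1/\lambda}$ bound on $g=[\zeta\psi(\varrho)]_h$. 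This div--curl/commutator structure is the missing idea in your plan; without it the convective contribution cannot be bounded uniformly in $\varepsilon$ using only \eqref{navier7}, Theorem~\ref{radontheorem}, and Proposition~\ref{momentproposition}.
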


The rest of the section is devoted to the proof of this theorem. Our
strategy is the following. First we construct a special  test
function $\boldsymbol \xi$ such that $
p\text{~div~}\boldsymbol\xi\sim p(\varrho)\varrho^\lambda$. Next we
insert $\boldsymbol\xi$ into \eqref{navier8} to obtain special
integral identity containing the vector field $\mathbf F$. Finally
we employ Lemmas \ref{momentlemma1} and \ref{momentlemma2} to obtain
estimate \eqref{lebeg1}. Hence the proof of Theorem
\ref{lebegtheorem} falls into four steps.

\subsection{Step 1. Test functions}
Fix an arbitrary $\lambda\in (0,1/6)$ and choose a function $\psi\in
C^\infty(\mathbb R)$ with the properties
\begin{equation}\label{lebeg2}
     \psi(0)=0,\quad \psi(s)\geq
    0,\quad c^{-1}|s|^\lambda-1\leq \psi(s)\leq c|s|^\lambda, \quad
    |s\psi'(s)|\leq c |s|^\lambda,
\end{equation}
where $c$ is some positive constant. Next choose an arbitrary
function $\zeta\in C^{\infty}_0(\mathbb R^2)$ such that $\zeta$ is
nonnegative and is compactly supported in $\Omega$. Recall the
definition of the mollifier $\big[\cdot\big]_h$ and introduce the
auxiliary function
\begin{equation}\label{lebeg7}
g(x,t)=\big[\zeta\psi(\varrho)\big]_h(x,t)\text{~~in~~} R^2\times
[0,T].
\end{equation}
We will assume that $h$ is less than the distance between  the
support of $\zeta$ and the boundary of $\Omega$. Finally, introduce
the test vector field
\begin{equation}\label{lebeg11}
    \boldsymbol \xi(x,t)=\zeta(x)\, \mathbf H(x,t),\text{~~where~~} \mathbf H=\nabla\Delta^{-1} g.
\end{equation}
The following lemmas constitute the basic properties of
$\psi(\varrho)$, $g$, and $\mathbf H$.
\begin{lemma}\label{lebeglemma1}
Under the assumptions of Theorem \ref{lebegtheorem}, there is a
constant $c(\lambda)$, depending only on $\lambda$ and $\psi$, such
that
\begin{equation}\label{lebeg3}
\|\psi(\varrho)\nabla\mathbf u\|_{L^2(0,T;
L^{\frac{2}{1+2\lambda}}(\mathbb R^2))}+
\|(\psi(\varrho)-\varrho\psi'(\varrho))\nabla\mathbf u\|_{L^2(0,T;
L^{\frac{2}{1+2\lambda}}(\mathbb R^2))}\leq c(\lambda)E,
\end{equation}
\begin{equation}\label{lebeg4}
\|\psi(\varrho)\mathbf u\|_{L^2(0,T; L^{3}(\mathbb R^2))}\leq
c(\lambda)E.
\end{equation}
\end{lemma}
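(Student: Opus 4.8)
The plan is to reduce both estimates to the elementary pointwise bounds
$0\le\psi(\varrho)\le c\varrho^\lambda$ and $|\psi(\varrho)-\varrho\psi'(\varrho)|\le c\varrho^\lambda$, which follow at once from \eqref{lebeg2}, and then to combine the energy estimate \eqref{navier7}–\eqref{einequality} with Hölder's inequality in $x$ and the two-dimensional Sobolev embedding $W^{1,2}(\mathbb R^2)\hookrightarrow L^p(\mathbb R^2)$, valid (with a constant depending on $p$) for every finite $p$. Throughout I use that $\varrho$ and $\mathbf u$ are extended by $0$, so that (since $\psi(0)=0$) the functions $\psi(\varrho)$ and $\psi(\varrho)-\varrho\psi'(\varrho)$ are supported in $\Omega$, and that, by Poincar\'e's inequality on the bounded domain $\Omega$, $\|\mathbf u\|_{L^2(0,T;W^{1,2}(\mathbb R^2))}\le cE$.

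First I would record that $\psi(\varrho)\in L^\infty(0,T;L^{1/\lambda}(\mathbb R^2))$ with norm $\le c(\lambda)E$: indeed $\psi(\varrho)^{1/\lambda}\le c\,\varrho$ pointwise, so $\int_{\mathbb R^2}\psi(\varrho)^{1/\lambda}\,dx=\int_\Omega\psi(\varrho)^{1/\lambda}\,dx\le c\int_\Omega\varrho\,dx\le cE$ for a.e.\ $t$, by the first line of \eqref{einequality}. Since $\tfrac{1+2\lambda}{2}=\lambda+\tfrac12$, H\"older's inequality in $x$ with exponents $1/\lambda$ and $2$ gives, pointwise in $t$,
\[
\|\psi(\varrho)\nabla\mathbf u\|_{L^{2/(1+2\lambda)}(\mathbb R^2)}\le\|\psi(\varrho)\|_{L^{1/\lambda}(\mathbb R^2)}\,\|\nabla\mathbf u\|_{L^2(\mathbb R^2)} .
\]
Taking the square and integrating in $t$, the first factor is in $L^\infty(0,T)$ and $\|\nabla\mathbf u\|_{L^2}\in L^2(0,T)$, so the left side is in $L^2(0,T)$ with norm $\le c(\lambda)E$. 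The term involving $\psi(\varrho)-\varrho\psi'(\varrho)$ is estimated in exactly the same way, using the same $O(\varrho^\lambda)$ bound; this proves \eqref{lebeg3}.

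For \eqref{lebeg4} I would use the algebraic identity $\varrho^\lambda|\mathbf u|=(\varrho|\mathbf u|^2)^\lambda\,|\mathbf u|^{1-2\lambda}$ (both sides vanish where $\mathbf u=0$), so that $\psi(\varrho)|\mathbf u|\le c\,(\varrho|\mathbf u|^2)^\lambda|\mathbf u|^{1-2\lambda}$. H\"older's inequality in $x$ with exponents $1/\lambda$ and $\tfrac{3}{1-3\lambda}$ (reciprocals summing to $1/3$, both admissible since $\lambda<1/6<1/3$) gives, for a.e.\ $t$,
\[
\|\psi(\varrho)\mathbf u\|_{L^3(\mathbb R^2)}\le c\,\|\varrho|\mathbf u|^2\|_{L^1(\mathbb R^2)}^{\lambda}\,\|\mathbf u\|_{L^{p}(\mathbb R^2)}^{1-2\lambda},\qquad p:=\tfrac{3(1-2\lambda)}{1-3\lambda}\in(3,4).
\]
Since $p$ is finite, $\|\mathbf u(t)\|_{L^p}\le c(\lambda)\|\mathbf u(t)\|_{W^{1,2}(\mathbb R^2)}$; using $\|\varrho|\mathbf u|^2\|_{L^\infty(0,T;L^1)}\le cE$ (a factor in $L^\infty(0,T)$) and $\mathbf u\in L^2(0,T;W^{1,2}(\mathbb R^2))$, the right-hand side is bounded in $L^{2/(1-2\lambda)}(0,T)\subset L^2(0,T)$ with norm $\le c(\lambda)E$, which is \eqref{lebeg4}.

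No substantial obstacle arises; the only points needing care are the bookkeeping of the H\"older exponents — one must check that the auxiliary exponent $\tfrac{3}{1-3\lambda}$ and the resulting Sobolev exponent $p$ remain finite, which is precisely where the hypothesis $\lambda<1/6$ (in fact only $\lambda<1/3$ is needed here) is used — and the remark that $\psi(\varrho)$ and $\psi(\varrho)-\varrho\psi'(\varrho)$ vanish off $\Omega$, so that the $L^{1/\lambda}(\mathbb R^2)$ norms appearing above are finite and controlled by $\|\varrho\|_{L^\infty(0,T;L^1(\Omega))}$.
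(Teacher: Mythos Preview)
Your proof is correct. For \eqref{lebeg3} your argument coincides with the paper's: bound $|\psi(\varrho)|$ and $|\psi(\varrho)-\varrho\psi'(\varrho)|$ by $c\varrho^\lambda$, control $\varrho^\lambda\in L^\infty(0,T;L^{1/\lambda})$ via the mass bound, and apply H\"older with exponents $1/\lambda$ and $2$.

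For \eqref{lebeg4} you take a slightly different splitting. You write $\varrho^\lambda|\mathbf u|=(\varrho|\mathbf u|^2)^\lambda|\mathbf u|^{1-2\lambda}$ and combine the kinetic-energy bound $\varrho|\mathbf u|^2\in L^\infty(0,T;L^1)$ with the Sobolev embedding $W^{1,2}\hookrightarrow L^{3(1-2\lambda)/(1-3\lambda)}$. The paper instead splits directly as $\varrho^\lambda\cdot\mathbf u$, pairing $\varrho^\lambda\in L^\infty(0,T;L^{1/\lambda})$ with $\mathbf u\in L^2(0,T;L^q)$, $q=3/(1-3\lambda)$, again by Sobolev embedding. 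The paper's route is marginally more economical (one energy quantity instead of two, and it lands exactly in $L^2(0,T)$ rather than in the smaller space $L^{2/(1-2\lambda)}(0,T)$), but your version is equally valid. As you note, both arguments need only $\lambda<1/3$; the stronger hypothesis $\lambda<1/6$ enters later in Theorem~\ref{lebegtheorem}, in the estimate of $\Gamma^{(1)}$ and $\Gamma^{(3)}$.
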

\begin{proof} Notice that
\begin{equation}\label{lebeg5}
|\psi(\varrho)\nabla\mathbf
u|+|(\psi(\varrho)-\varrho\psi'(\varrho))\nabla\mathbf u|\leq c
\varrho^{\lambda}|\nabla\mathbf u|.
\end{equation}
Recall that  $\mathbf u$ and $\varrho$ vanish outside of
$\Omega\times[0,T]$. From this and relations
$$
1/2+1/\infty=1/2,\quad 1/2+1/(1/\lambda)=(1+2\lambda)/2
$$
we obtain
\begin{equation*}\begin{split}
\|\varrho^\lambda\nabla\mathbf u\|_{L^2(0,T;
L^{\frac{2}{1+2\lambda}}(\Omega))}\leq  \|\nabla\mathbf
u\|_{L^2(0,T; L^2(\Omega))}\, \|\varrho^\lambda \|_{L^\infty(0,T;
L^{1/\lambda}(\Omega))}\leq \\c(\lambda) \|\nabla\mathbf
u\|_{L^2(0,T; L^2(\Omega))}\, \|\varrho \|_{L^\infty(0,T;
L^{1}(\Omega))}^\lambda\leq c(\lambda) E,
\end{split}\end{equation*}
which along with \eqref{lebeg5} yields \eqref{lebeg3}. Next set
$q=3/(1-3\lambda)$. Since $1/q+1/(1/\lambda)=1/3$, we have
\begin{equation*}\begin{split}
\|\psi(\varrho)\mathbf u\|_{L^2(0,T; L^{3}(\mathbb R^2))}\leq
c\|\varrho^\lambda\mathbf u\|_{L^2(0,T; L^{3}(\Omega))}\leq
c\|\varrho^\lambda\|_{L^\infty(0,T;
L^{1/\lambda}(\Omega))}\|\mathbf u\|_{L^2(0,T; L^{q}(\Omega))}\\
\leq c(\lambda)\|\varrho\|_{L^\infty(0,T;
L^{1}(\Omega))}^\lambda\|\mathbf u\|_{L^2(0,T; L^{q}(\Omega))}\leq
c(\lambda)\|\varrho\|_{L^\infty(0,T;
L^{1}(\Omega))}^\lambda\|\mathbf u\|_{L^2(0,T;
W^{1,2}_0(\Omega))}\leq cE,
\end{split}\end{equation*}
and the lemma follows.
\end{proof}
\begin{lemma}\label{lebeglemma2}
Under the assumptions of Theorem \ref{lebegtheorem}, the function
$g$ belongs to the class $L^\infty (0,T; C^k(\mathbb R^2))$ for
every integer $k\geq 0$. It is compactly supported in $\Omega\times
[0,T]$ and admits the estimate
\begin{equation}\label{lebeg8}
    \|g\|_{L^\infty(0,T; L^{1/\lambda}(\mathbb R^2))}\leq c(\zeta) E.
\end{equation}
Moreover,  $\partial_t g$ belongs to the class $L^2 (0,T;
C^k(\mathbb R^2))$  and has the representation
\begin{equation}\label{lebeg9}
    \partial_t g=-\text{\rm div~} \big[ \zeta\psi(\varrho)\mathbf u]_h+
   \big[\psi(\varrho)\nabla\zeta\mathbf u]_h+
   \big[ \zeta(\psi(\varrho)-\psi'(\varrho)\varrho)\text{\rm ~div~}\mathbf u]_h.
\end{equation}
\end{lemma}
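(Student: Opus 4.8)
The plan is to establish the three assertions — regularity of $g$, compact support, and the estimate \eqref{lebeg8} — by routine mollifier arguments, and then to derive the representation \eqref{lebeg9} for $\partial_t g$ from the renormalized continuity identity \eqref{navier10}, exactly paralleling the proof of Lemma \ref{radonlemma3}. First I would observe that by \eqref{lebeg2} we have $0\le \psi(\varrho)\le c\varrho^\lambda$ pointwise, and since $\lambda<1/6<1$ and $\varrho\in L^\infty(0,T;L^\gamma(\Omega))$ with $\gamma\ge 6$, the function $\zeta\psi(\varrho)$ lies in $L^\infty(0,T;L^{1/\lambda}(\mathbb R^2))$ with norm bounded by $c(\zeta)\|\varrho\|_{L^\infty(0,T;L^1(\Omega))}^\lambda\le c(\zeta)E$ (using $1/\lambda\le\gamma$ and interpolation, or directly $\|\varrho^\lambda\|_{L^{1/\lambda}}=\|\varrho\|_{L^1}^\lambda$). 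Convolving with the smooth compactly supported kernel in \eqref{radon7} preserves this $L^\infty(0,T;L^{1/\lambda})$ bound and yields $g\in L^\infty(0,T;C^k(\mathbb R^2))$ for all $k$, since $\|[\,f\,]_h(t)\|_{C^k}\le c(k,h)\|f(t)\|_{L^1}$. This gives \eqref{lebeg8}. Compact support of $g$ in $\Omega\times[0,T]$ follows because $\mathrm{spt}\,\zeta\Subset\Omega$ and $h$ is taken smaller than the distance from $\mathrm{spt}\,\zeta$ to $\partial\Omega$, so the $h$-mollification of $\zeta\psi(\varrho)$ stays supported inside $\Omega$.

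For the time derivative, I would apply the renormalized identity \eqref{navier10} with $\varphi=\psi$ (note $\psi$ satisfies the growth condition \eqref{navier11} since $|\psi(s)|+|s\psi'(s)|+|s^2\psi''(s)|\le c(1+s^2)$ when $\lambda<2$, which holds) and with the test function $\psi$ there replaced by $\zeta\chi$ where $\chi\in C^\infty_0(0,T)$; expanding $\nabla(\zeta\chi)$ produces a term with $\psi(\varrho)\nabla\zeta\cdot\mathbf u$, exactly as in \eqref{radon14}. Then I would insert the specific choice
$$
\zeta\chi \ \longmapsto\ \chi(t)\,h^{-2}\,\omega\!\big(\tfrac{x_1-y_1}{h}\big)\,\omega\!\big(\tfrac{x_2-y_2}{h}\big)
$$
—more precisely, multiply by $\zeta$ and mollify—and read off, for every $y$, the pointwise-in-$t$ identity
$$
\partial_t\big[\zeta\psi(\varrho)\big]_h=-\,\mathrm{div}\,\big[\zeta\psi(\varrho)\mathbf u\big]_h+\big[\psi(\varrho)\nabla\zeta\cdot\mathbf u\big]_h+\big[\zeta\big(\psi(\varrho)-\psi'(\varrho)\varrho\big)\,\mathrm{div}\,\mathbf u\big]_h,
$$
which is \eqref{lebeg9}. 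The membership $\partial_t g\in L^2(0,T;C^k(\mathbb R^2))$ then follows by checking that each of the three terms on the right lies in $L^2(0,T;C^k)$: the first and third involve $\psi(\varrho)\mathbf u$ and $(\psi(\varrho)-\psi'(\varrho)\varrho)\,\mathrm{div}\,\mathbf u$, which by Lemma \ref{lebeglemma1} (estimates \eqref{lebeg3}–\eqref{lebeg4}) belong to $L^2(0,T;L^1(\mathbb R^2))$, and the second involves $\psi(\varrho)\nabla\zeta\cdot\mathbf u\in L^2(0,T;L^1)$ as well (bounded by $c(\zeta)\varrho^\lambda|\mathbf u|$, and $\varrho^\lambda|\mathbf u|\in L^2(0,T;L^1)$ since $\|\varrho^\lambda|\mathbf u|\|_{L^1}\le\|\varrho\|_{L^1}^\lambda\|\,|\mathbf u|\,\|_{L^{1/(1-\lambda)}}$ with the latter controlled via Sobolev embedding of $W^{1,2}_0(\Omega)$ in 2D). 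Mollification then upgrades $L^1$ in space to $C^k$ in space with constants depending on $h$, while the $L^2$-in-time integrability is retained.

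The only mildly delicate point — and the step I would watch most carefully — is the justification that the renormalized identity \eqref{navier10} may legitimately be used with the non-polynomial function $\varphi=\psi$: one must verify the growth condition \eqref{navier11}, which is immediate from \eqref{lebeg2} since $\lambda<1/6$ forces $\varrho^\lambda\le 1+\varrho^2$ and likewise for the derivative terms, and that the test function $\zeta\chi$ times the mollifier kernel is admissible (smooth, vanishing near the top $\Omega\times\{t=T\}$ because $\chi\in C^\infty_0(0,T)$, and supported away from $\partial\Omega$). Everything else is the same bookkeeping as in Lemmas \ref{radonlemma2}–\ref{radonlemma3}, with $\zeta\varrho$ replaced by $\zeta\psi(\varrho)$ and the extra renormalization remainder $\zeta(\psi(\varrho)-\psi'(\varrho)\varrho)\,\mathrm{div}\,\mathbf u$ kept explicitly rather than absorbed, since it will be needed in the subsequent steps of the proof of Theorem \ref{lebegtheorem}.
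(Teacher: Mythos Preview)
Your proposal is correct and follows essentially the same approach as the paper: mollifier properties for regularity, support, and the $L^{1/\lambda}$ estimate; the renormalized identity \eqref{navier10} tested against $\zeta$ times a mollifier kernel to derive \eqref{lebeg9}; and Lemma \ref{lebeglemma1} to verify $\partial_t g\in L^2(0,T;C^k)$. You are in fact slightly more careful than the paper in explicitly checking the growth condition \eqref{navier11} for $\varphi=\psi$, which the paper uses without comment.
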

\begin{proof} Since $\zeta\psi(\varrho)\in L^\infty(0,T; L^1(\mathbb R^2))$, it follows from general properties of the mollifier that $g\in L^\infty
(0,T; C^k(\mathbb R^2))$. Since $h$ is less than the distance
between $\text{~spt~}\zeta$ and $\mathbb R^2\setminus \Omega$, the
function $g$ is supported in $\Omega\times[0,T]$. Next, inequality
\eqref{lebeg2} implies the estimate
$$
\|g(t)\|_{L^{1/\lambda}(\mathbb R^2)}\leq
\|\zeta\psi(\varrho)(t)\|_{L^{1/\lambda}(\mathbb R^2)}\leq
c(\zeta)\|\varrho(t)\|_{L^{1}(\mathbb R^2)}^{\lambda},
$$
which along with energy inequality \eqref{navier7} yields
\eqref{lebeg8}. Let us consider the time derivative of $g$. In view
of \eqref{navier10} the integral identity
\begin{multline}\label{lebeg10}
\int_{Q_T}\Big(\psi(\varrho)\partial_t\varsigma+
\big(\psi(\varrho)\mathbf u\big)
\cdot\nabla\varsigma-\varsigma\big(\psi'(\varrho)\varrho-\psi(\varrho)\big)\div\mathbf
u\Big)\,dxdt=0
\end{multline}
holds  for every function $\varsigma\in C^\infty(\Pi)$ which is
supported in $Q_T$. Choose an arbitrary $\xi\in C^\infty_0(0,T)$,
$y\in \mathbb R^2$, and set
$$
\varsigma(x,t)=\xi
(t)\zeta(x)\,h^{-2}\omega\big(\frac{x_1-y_1}{h}\big)
    \omega\big(\frac{x_2-y_2}{h}\big).
$$
Substituting $\varsigma$ into \eqref{lebeg10} we obtain
\begin{multline*}
\int_0^T\xi'(t) g(y,t)\,dt -\int_0^T\xi(t)\text{div~} \big[
\zeta\psi(\varrho)\mathbf u]_h(y,t)\, dt +\int_0^T\xi(t)
\big[\psi(\varrho)\nabla\zeta\mathbf u]_h(y,t)\, dt+\\\int_0^T\xi(t)
\big[ \zeta(\psi(\varrho)-\psi'(\varrho)\varrho)\text{~div~}\mathbf
u]_h(y,t)\,dt=0,
\end{multline*}
which yields \eqref{lebeg9}. In view of Lemma \ref{lebeglemma1}, the
functions $\psi(\varrho)\zeta\mathbf u$,~
$\psi(\varrho)\nabla\zeta\cdot \mathbf u$ belong to the class
$L^2(0,T; L^3(\mathbb R^2)$, and the function
$\zeta(\psi(\varrho)-\psi'(\varrho)\varrho)\text{~div~}\mathbf u$
belong to $L^2(0,T; L^{2/(1+2\lambda)}(\mathbb R^2)$. Since the
mollifier $\big[\cdot\big]_h :L^p(\mathbb R^2)\to C^k(\mathbb R^2)$
is bounded for all $p\geq 1$ and $k\geq 0$, the representation
\eqref{radon9} yields the inclusion $\partial_t g\in L^2 (0,T;
C^k(\mathbb R^2))$ .
\end{proof}
\begin{lemma}\label{lebeglemma3} Under the assumptions of Theorem \ref{lebegtheorem},
$\mathbf H$ belongs to the class class $L^\infty (0,T; C^k(\mathbb
R^2))$, and  $\partial_t \mathbf H$ belongs to the class $L^2 (0,T;
C^k(\mathbb R^2))$ for every integer $k\geq 0$. Moreover, $\mathbf
H$ admits the estimates.
\begin{equation}\label{lebeg12}
  \|\mathbf H\|_{L^\infty(0,T; L^\infty(\Omega))}\leq c(\zeta) E,\quad
\|\nabla \mathbf H\|_{L^\infty(0,T; L^{1/\lambda}(\Omega))}\leq
c(\zeta) E.
\end{equation}

\end{lemma}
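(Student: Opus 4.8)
The function $\mathbf H=\nabla\Delta^{-1}g$ is obtained by applying the vector operator $(A_1,A_2)=(\partial_{x_1}\Delta^{-1},\partial_{x_2}\Delta^{-1})$ to $g$, so everything reduces to the mapping properties of $A_j$ recorded in Section \ref{preliminaries} together with the properties of $g$ established in Lemma \ref{lebeglemma2}. The plan is as follows. First, for the regularity in time and space: since $g\in L^\infty(0,T;C^k(\mathbb R^2))$ is compactly supported in $\Omega\times[0,T]$ uniformly in $t$, the integral representation \eqref{riesz1} for $A_j$ shows $\mathbf H(\cdot,t)=c\int_{\mathbb R^2}|x-y|^{-2}(x-y)g(y,t)\,dy$ depends smoothly on $x$ (differentiation under the integral sign is justified off the support, and the weak singularity $|x-y|^{-1}$ is integrable), so $\mathbf H\in L^\infty(0,T;C^k(\mathbb R^2))$. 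The same representation applied to $\partial_t g$, which by \eqref{lebeg9} lies in $L^2(0,T;C^k(\mathbb R^2))$ with compact support in $\Omega\times[0,T]$, gives $\partial_t\mathbf H=\nabla\Delta^{-1}\partial_t g\in L^2(0,T;C^k(\mathbb R^2))$; interchanging $\partial_t$ and $\Delta^{-1}$ is legitimate because $\Delta^{-1}$ is a fixed convolution operator and $g$ is differentiable in $t$ in the required sense.

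Next, the two estimates in \eqref{lebeg12}. For the gradient bound I would use that $\nabla\mathbf H=\nabla\nabla\Delta^{-1}g$, whose entries are (up to constants) $R_iR_j g$, i.e. compositions of Riesz transforms, which by the Calder\'on–Zygmund theorem are bounded on $L^{1/\lambda}(\mathbb R^2)$ for $1/\lambda\in(1,\infty)$ — here $\lambda\in(0,1/6)$ so $1/\lambda>6>1$, which is exactly why we need $\lambda$ bounded away from $0$ and from the endpoint. Hence $\|\nabla\mathbf H(t)\|_{L^{1/\lambda}(\mathbb R^2)}\leq c(\lambda)\|g(t)\|_{L^{1/\lambda}(\mathbb R^2)}$, and taking the sup in $t$ and invoking \eqref{lebeg8} gives $\|\nabla\mathbf H\|_{L^\infty(0,T;L^{1/\lambda}(\Omega))}\leq c(\zeta)E$.

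For the $L^\infty$ bound on $\mathbf H$ itself I would argue by the estimate $\|A_jf\|_{W^{1,r}(B_R)}\leq c(R,\Omega)\|f\|_{L^r(\mathbb R^2)}$ stated in Section \ref{preliminaries}, applied with $r=1/\lambda$ and a ball $B_R\supset\Omega$: this gives $\|\mathbf H(t)\|_{W^{1,1/\lambda}(B_R)}\leq c(\zeta)\|g(t)\|_{L^{1/\lambda}(\mathbb R^2)}\leq c(\zeta)E$, and then the Sobolev embedding $W^{1,1/\lambda}(B_R)\hookrightarrow C(B_R)$ — valid since $1/\lambda>6>2=\dim$ — upgrades this to $\|\mathbf H(t)\|_{L^\infty(B_R)}\leq c(\zeta)E$; since $\mathbf H$ is analytic and decaying outside $\Omega$ (as $g$ is supported in $\Omega$), in particular $\|\mathbf H\|_{L^\infty(0,T;L^\infty(\Omega))}\leq c(\zeta)E$. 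Alternatively one can bound $|\mathbf H(x,t)|\leq c\int_\Omega|x-y|^{-1}|g(y,t)|\,dy$ directly by H\"older's inequality, since $|x-y|^{-1}\in L^{r'}(\Omega)$ for $r'<2$ and $r'=(1/\lambda)'=1/(1-\lambda)<2$; this again uses $\lambda<1/2$.

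I expect the only genuine subtlety to be the justification that $\partial_t$ commutes with the singular convolution $\nabla\Delta^{-1}$ and that the resulting object is the $L^2(0,T;C^k)$ function claimed — one must check that the weak time derivative of $\mathbf H$, computed against test functions, coincides with $\nabla\Delta^{-1}\partial_t g$; this follows by Fubini from \eqref{lebeg9} and the boundedness of $\big[\cdot\big]_h$ and $A_j$ on the relevant $L^p$ spaces, but it is the one place where care is needed. Everything else is a routine application of Calder\'on–Zygmund theory, Sobolev embedding, and the bounds \eqref{lebeg8}--\eqref{lebeg9} already in hand.
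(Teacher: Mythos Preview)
Your proposal is correct and follows essentially the same route as the paper: the regularity of $\mathbf H$ and $\partial_t\mathbf H$ is inherited from Lemma \ref{lebeglemma2}, the gradient bound comes from writing $\partial_{x_i}H_j=R_iR_jg$ and invoking Calder\'on--Zygmund on $L^{1/\lambda}$, and the $L^\infty$ bound is obtained via $\|\mathbf H(t)\|_{W^{1,1/\lambda}(B_R)}\le c\|g(t)\|_{L^{1/\lambda}}$ together with the Sobolev embedding $W^{1,1/\lambda}(B_R)\hookrightarrow C(B_R)$ (valid since $1/\lambda>2$). The paper additionally bounds $\mathbf H$ on $\mathbb R^2\setminus B_R$ directly from the kernel representation \eqref{riesz1}, but since the lemma only asks for the $L^\infty$ norm over $\Omega\subset B_R$, your restriction is sufficient; your alternative H\"older argument with $|x-y|^{-1}\in L^{1/(1-\lambda)}(\Omega)$ is also valid and slightly more elementary.
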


\begin{proof}
Since the function $g$ is supported in $Q_T$, the inclusions
$\mathbf H\in L^\infty (0,T; C^k(\mathbb R^2))$  and $\partial_t
\mathbf H\in L^2 (0,T; C^k(\mathbb R^2))$ obviously follow from
\eqref{lebeglemma2}. Now choose an arbitrary $R$ such that
$\Omega\Subset B_{R/2}$. Since $\text{spt } g(t)\subset \Omega$, we
have
$$
\|\mathbf H(t)\|_{W^{1/\lambda}(B_R)}\leq c(R, \Omega)
\|g(t)\|_{L^{1/\lambda}(\Omega)}
$$
The embedding $W^{1/\lambda}(B_R)\hookrightarrow C(B_R)$ is bounded
for every $\lambda<1/2$. It follows that
\begin{multline}\label{lebeg13}
\|\mathbf H\|_{L^\infty(0,T; C(B_R))}\leq c(R,\lambda)\|\mathbf
H\|_{L^\infty(0,T;W^{1/\lambda}(B_R))}\leq\\
\|g\|_{L^\infty(0,T;L^{1/\lambda}(\Omega)}\leq c(R, \zeta, \lambda)
E
\end{multline}
 Since $\Omega\subset B_{R/2}$, representation \eqref{riesz1} implies
$$
|\mathbf H(x,t)|\leq c(R, \Omega)
\|g(t)\|_{L^{1}(\Omega)}\text{~~for~~}x\in \mathbb R^2\setminus B_R
$$
Thus we get
\begin{equation*}
  \|\mathbf H\|_{L^\infty(0,T; C(\mathbb R^2\setminus B_R))}\leq c(R,\lambda)
\|g\|_{L^\infty(0,T;L^{1}(\Omega)}\leq c(R, \zeta, \lambda) E.
\end{equation*}
Combining this result with \eqref{lebeg13} gives the first
inequality in \eqref{lebeg12}. Next notice that
$$
\partial_{x_i}H_j=\partial_{x_i}\partial_{x_j}\Delta^{-1} g= R_iR_j g,
$$
where $R_i$ , $R_j$ are the Riesz singular operators. Since the
Riesz operators are bounded in $L^{1/\lambda}(\mathbb R^2)$, the
second inequality in \eqref{lebeg12} is a straightforward
consequence of estimate \eqref{lebeg8}.

\end{proof}

\subsection{Step 2. Integral identities }
The proof of Theorem \ref{lebegtheorem} is based on the special
integral identity which is given by the following proposition.
\begin{proposition}\label{lebegproposition}
Under the assumptions of Theorem \ref{lebegtheorem}, we have
\begin{equation}\label{lebeg15}
    \int_{\Pi}\zeta p(\varrho)\big[\zeta\psi(\varrho)\big]_h dxdt
    =\sum\limits_{i=1}^7 \Gamma^{(i)}+I_h,
\end{equation}
where
\begin{equation}\label{alla1}\begin{split}
{\Gamma^{(1)}}=\int_{Q_T} F_1\big[\zeta(\psi(\varrho)-
\varrho\psi'(\varrho))\text{\rm~div~}\mathbf u\big]_h\, dxdt,
\\
{\Gamma^{(3)}}=\int_{Q_T}\Big(g F_1\text{\rm~div~}\mathbf u-
 F_1\nabla u_j \cdot\frac{\partial\mathbf H}{\partial x_j}
- F_2\nabla^\perp u_j \cdot\frac{\partial\mathbf H}{\partial
x_j}\Big)\, dxdt,\\\Gamma^{(2)}=\int_{Q_T} F_1\big[\psi(\varrho)
\nabla\zeta\cdot\mathbf u\big]_h\, dxdt,\quad
\Gamma^{(4)}=-\int_{Q_T}\varrho (\mathbf u\cdot \nabla\zeta)\,
(\mathbf u\cdot \mathbf H)\, dxdt,\\\Gamma^{(5)}=- \int_{Q_T}\zeta
p\nabla\zeta\cdot\mathbf H\, dxdt,\quad \Gamma^{(6)}=
\int_{Q_T}\Big(\mathbb S(\mathbf
    u):\nabla\boldsymbol\xi-
\varrho\mathbf f\cdot\boldsymbol\xi\,\Big) dxdt, \\
\Gamma^{(7)}=\lim\limits_{\tau\to
0}\frac{1}{\tau}\int_{T-\tau}^T\int_\Omega\zeta\varrho\mathbf
u\cdot\mathbf H\,dxdt -\int_\Omega\varrho_0(x)\zeta
\mathbf{u}_0(x)\mathbf H(x,0)\,dx
\end{split}\end{equation}
\begin{equation}\label{lebeg16}
    I_h=-\int_{\mathbb R^2\times[0,T]}\zeta\varrho\mathbf u\cdot\nabla\,
    \text{\rm div}\,\Delta^{-1}\big(\,
    \big[\zeta\psi(\varrho)\big]_h\mathbf u-
    \big[\zeta\psi(\varrho)\mathbf u\big]_h\,\big)\, dxdt.
\end{equation}
Here  $\mathbf F$ is a solution to the Cauchy-Riemann equations
\eqref{moment3}, and $\mathbf H$ is given by \eqref{lebeg11}.
\end{proposition}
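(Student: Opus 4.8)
The plan is to take the momentum identity \eqref{navier8}, substitute the test vector field $\boldsymbol\xi=\zeta\mathbf H$ with $\mathbf H=\nabla\Delta^{-1}g$ and $g=[\zeta\psi(\varrho)]_h$, and then systematically re-express every resulting term so that the pressure term $\int p\,\div\boldsymbol\xi$ produces the desired left-hand side $\int_\Pi\zeta p(\varrho)[\zeta\psi(\varrho)]_h$, while all other contributions are collected into $\Gamma^{(1)},\dots,\Gamma^{(7)}$ and the commutator remainder $I_h$. Concretely, since $\div\mathbf H=\Delta\Delta^{-1}g=g=[\zeta\psi(\varrho)]_h$, one has $\div\boldsymbol\xi=\zeta g+\nabla\zeta\cdot\mathbf H$, so the pressure term splits into the main term $\int_\Pi\zeta p(\varrho)[\zeta\psi(\varrho)]_h$ plus $-\Gamma^{(5)}=\int_{Q_T}\zeta p\,\nabla\zeta\cdot\mathbf H$. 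Thus the bulk of the work is to identify what the remaining terms of \eqref{navier8} become.

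First I would handle the viscosity and force terms: these are exactly $\Gamma^{(6)}=\int_{Q_T}(\mathbb S(\mathbf u):\nabla\boldsymbol\xi-\varrho\mathbf f\cdot\boldsymbol\xi)$, no manipulation needed. Next, the time-derivative term $\int_{Q_T}\varrho\mathbf u\cdot\partial_t\boldsymbol\xi$ together with the initial-data term $\int_\Omega\varrho_0\mathbf u_0\cdot\boldsymbol\xi(x,0)$: since $\boldsymbol\xi$ does not vanish at $t=T$ (only $g,\mathbf H$ are merely in $L^2_t C^k_x$), I would use the regularization $\eta_h$ as in Lemma \ref{radonlemma1}, integrate by parts in $t$, and pick up $\Gamma^{(7)}$, which contains the boundary-in-time limit $\lim_{\tau\to0}\tfrac1\tau\int_{T-\tau}^T\int_\Omega\zeta\varrho\mathbf u\cdot\mathbf H$ minus the $t=0$ term; the interior contribution $-\int\zeta\varrho\mathbf u\cdot\partial_t\mathbf H$ then gets combined with the convection term. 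The convection term $\int_{Q_T}\varrho\mathbf u\otimes\mathbf u:\nabla\boldsymbol\xi=\int\varrho u_iu_j\partial_{x_j}(\zeta H_i)$ splits via the product rule into $\int\zeta\varrho u_iu_j\partial_{x_j}H_i$ and $\int\varrho(\mathbf u\cdot\nabla\zeta)(\mathbf u\cdot\mathbf H)=-\Gamma^{(4)}$.

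The crux is then to combine $-\int\zeta\varrho\mathbf u\cdot\partial_t\mathbf H$ with $\int\zeta\varrho u_iu_j\partial_{x_j}H_i$ and rewrite them through the Cauchy-Riemann structure. Writing $\mathbf H=\nabla\Delta^{-1}g$ and using $\partial_t g$ from \eqref{lebeg9}, one gets $\partial_t\mathbf H=\nabla\Delta^{-1}\partial_t g=\nabla\Delta^{-1}(-\div[\zeta\psi(\varrho)\mathbf u]_h+[\psi(\varrho)\nabla\zeta\cdot\mathbf u]_h+[\zeta(\psi(\varrho)-\varrho\psi'(\varrho))\div\mathbf u]_h)$. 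The term $-\nabla\Delta^{-1}\div[\zeta\psi(\varrho)\mathbf u]_h$ is, up to the divergence-free complement, essentially $-[\zeta\psi(\varrho)\mathbf u]_h$ itself; substituting $\mathbf F$ from \eqref{riesz3} (so $F_1=\div\Delta^{-1}(\zeta\varrho\mathbf u)$, $F_2=-\mathrm{rot}\,\Delta^{-1}(\zeta\varrho\mathbf u)$, i.e. $\nabla F_1+\nabla^\perp F_2=\zeta\varrho\mathbf u$), and integrating by parts to move derivatives between $\zeta\varrho\mathbf u$ and $\mathbf H$, one identifies the commutator term $I_h$ (which measures the failure of $[\zeta\psi(\varrho)]_h\mathbf u$ to equal $[\zeta\psi(\varrho)\mathbf u]_h$), the terms $\Gamma^{(1)}$ and $\Gamma^{(2)}$ coming from the two lower-order pieces of $\partial_t g$, and the commutator-of-derivatives term $\Gamma^{(3)}$ which arises when $\nabla$ is moved past the singular operators $\nabla\Delta^{-1}$ acting on $\zeta\varrho\mathbf u$ versus acting on $\mathbf H$ — this is the standard "div--curl / commutator" algebra of the Lions--Feireisl effective-viscous-flux type, carried out with the Cauchy-Riemann decomposition $\nabla F_1+\nabla^\perp F_2$ in place of the usual single potential. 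I expect this last bookkeeping — correctly routing every derivative and every $\nabla\Delta^{-1}$ through integration by parts so that exactly $\Gamma^{(3)}$, and nothing extra, appears, while keeping careful track of the fact that $g=[\zeta\psi(\varrho)]_h$ is mollified but $\varrho$ in the pressure is not — to be the main obstacle; the individual estimates on $\Gamma^{(i)}$ and $I_h$ are deferred to the later steps and rely on Lemmas \ref{momentlemma1}, \ref{momentlemma2}, \ref{lebeglemma1}--\ref{lebeglemma3} already in place. Once the identity \eqref{lebeg15} is assembled, one simply checks that each written expression for $\Gamma^{(i)}$ matches the term produced, which is a routine (if lengthy) verification.
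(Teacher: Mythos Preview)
Your proposal is correct and follows essentially the same route as the paper: substitute $\eta_\tau\boldsymbol\xi$ with $\boldsymbol\xi=\zeta\nabla\Delta^{-1}[\zeta\psi(\varrho)]_h$ into \eqref{navier8}, let $\tau\to0$ to produce $\Gamma^{(7)}$, split the pressure term via $\div\boldsymbol\xi=\zeta g+\nabla\zeta\cdot\mathbf H$ to get the main term and $\Gamma^{(5)}$, use \eqref{lebeg9} for $\partial_t g$ to extract $\Gamma^{(1)},\Gamma^{(2)}$, and handle the convective term via the Cauchy--Riemann decomposition $\zeta\varrho\mathbf u=\nabla F_1+\nabla^\perp F_2$ together with $\mathrm{rot}\,\partial_{x_j}\mathbf H=0$, $\div\partial_{x_j}\mathbf H=\partial_{x_j}g$ to obtain $\Gamma^{(3)},\Gamma^{(4)}$ and the commutator $I_h$. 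One small slip: there is no genuine integration by parts in $t$---the weak form \eqref{navier8} already carries $\partial_t\boldsymbol\xi$, and $\Gamma^{(7)}$ arises from the $\eta_\tau'\boldsymbol\xi$ piece of $\partial_t(\eta_\tau\boldsymbol\xi)$.
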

 \begin{proof} Recall formulae \eqref{etah} and \eqref{lebeg11} for the cut-off function $\eta_\tau$ and the vector field $\boldsymbol\xi$.
Notice that the function $\eta_\tau\boldsymbol\xi$ and its time
derivative belong to  $L^\infty(0,T; C^k(\Omega))$ and $ L^2(0,T;
C^k(\Omega))$ respectively for all integer $k\geq 0$. Moreover,
$\eta_\tau\boldsymbol\xi$  vanishes at the lateral side and the top
of the cylinder $Q_T$. Hence we can use this function as a test
function in integral identity \eqref{navier8} to obtain
\begin{multline}
\int_{Q_T}\eta_\tau(t)\big(\varrho \mathbf{u}
\cdot\partial_t\boldsymbol{\xi}\big)\, dxdt +
\int_{Q_T}\eta_\tau\big(\varrho \mathbf u\otimes\mathbf
u+p(\varrho)\,\mathbb I- \mathbb S(\mathbf{u})
 \big):\nabla
\boldsymbol{\xi}\, dxdt+\\\int_{Q_T}\eta_\tau\varrho\mathbf{f}\cdot
\boldsymbol{\xi}\, dxdt \label{alla3}=\Gamma_T(\tau),
\end{multline}
where
\begin{equation}\label{alla0}
\Gamma_T(\tau)=\frac{1}{\tau}\int_{T-\tau}^T\int_\Omega\zeta\varrho
\mathbf u\cdot\mathbf H\,dxdt -\int_\Omega\varrho_0(x)\zeta
\mathbf{u}_0(x)\mathbf H(x,0)dx
\end{equation}
Letting $\tau \to 0$ in \eqref{alla3} we arrive at
\begin{multline*}
\int_{Q_T}\big(\varrho \mathbf{u}
\cdot\partial_t\boldsymbol{\xi}\big)\, dxdt + \int_{Q_T}\big(\varrho
\mathbf u\otimes\mathbf u+p(\varrho)\,\mathbb I- \mathbb
S(\mathbf{u})
 \big):\nabla
\boldsymbol{\xi}\, dxdt+ \int_{Q_T}\varrho\mathbf{f}\cdot
\boldsymbol{\xi}\, dxdt=\Gamma^{(7)}.
\end{multline*}
The limit $\Gamma^{(7)}=\lim\limits_{\tau\to 0}\Gamma_T(\tau)$
exists
 since there exists the limit of the  left hand side of  \eqref{alla3}.
 We can rewrite
the latter identity in the form
\begin{multline}\label{alla4}
\int_{Q_T}p(\varrho)\text{div~} \boldsymbol{\xi}\,
dxdt=\Gamma^{(6)}+\Gamma^{(7)}- \int_{Q_T}\varrho \mathbf{u}
\cdot\partial_t\boldsymbol{\xi}\, dxdt - \int_{Q_T}\varrho \mathbf
u\otimes\mathbf u:\nabla \boldsymbol{\xi}\, dxdt.
\end{multline}
It follows from the expression \eqref{lebeg11} for $\boldsymbol\xi$
that $\partial_t\boldsymbol\xi=\zeta\nabla\Delta^{-1}\partial_t g$,
where $g$ is given by \eqref{lebeg7}. From this and and the
representation \eqref{lebeg9} in Lemma \ref{lebeglemma2} we obtain
the identity
\begin{equation}\label{alla5}\begin{split}
    \int_{Q_T}\varrho \mathbf{u}
\cdot\partial_t\boldsymbol{\xi}\, dxdt=\int_{Q_T}\zeta\varrho
\mathbf u\cdot\nabla\Delta^{-1}\big[\zeta(\psi(\varrho)-
\varrho\psi'(\varrho))\text{~div~}\mathbf u\big]_h\, dxdt+\\
\int_{Q_T} \zeta\varrho\mathbf
u\cdot\nabla\Delta^{-1}\big[\psi(\varrho) \nabla\zeta\cdot\mathbf
u\big]_h\, dxdt-\int_{Q_T}\zeta\varrho\mathbf u\cdot\nabla
    \text{\rm div~}\Delta^{-1}\big[\zeta\psi(\varrho)\mathbf u\big]_h\, dxdt
\end{split}\end{equation}
Since $\zeta$,  $\big[\zeta(\psi(\varrho)-
\varrho\psi'(\varrho))\text{~div~}\mathbf u\big]_h$, and
$\big[\psi(\varrho) \nabla\zeta\cdot\mathbf u\big]_h$ are  supported
in $Q_T$, we have
\begin{multline*}
\int_{Q_T}\zeta\varrho \mathbf
u\cdot\nabla\Delta^{-1}\big[\zeta(\psi(\varrho)-
\varrho\psi'(\varrho))\text{~div~}\mathbf u\big]_h\,
dxdt=\int_{\Pi}\zeta\varrho \mathbf
u\cdot\nabla\Delta^{-1}\big[\zeta(\psi(\varrho)-
\varrho\psi'(\varrho))\text{~div~}\mathbf u\big]_h\, dxdt\\
=-\int_{\Pi} F_1\big[\zeta(\psi(\varrho)-
\varrho\psi'(\varrho))\text{~div~}\mathbf u\big]_h\,
dxdt=-{\Gamma^{(1)}},
\end{multline*}
\begin{multline*}
\int_{Q_T} \zeta\varrho\mathbf
u\cdot\nabla\Delta^{-1}\big[\psi(\varrho) \nabla\zeta\cdot\mathbf
u\big]_h\, dxdt= \int_{\Pi} \zeta\varrho\mathbf
u\cdot\nabla\Delta^{-1}\big[\psi(\varrho) \nabla\zeta\cdot\mathbf
u\big]_h\, dxdt=\\
-\int_{\Pi} F_1\big[\psi(\varrho) \nabla\zeta\cdot\mathbf u\big]_h\,
dxdt=-\Gamma^{(2)}.
\end{multline*}
Inserting these equalities into \eqref{alla5} we arrive at
\begin{equation}\label{alla6}\begin{split}
    \int_{\Pi}\varrho \mathbf{u}
\cdot\partial_t\boldsymbol{\xi}\, dxdt=-{\Gamma^{(1)}}-\Gamma^{(2)}
-\int_{Q_T}\zeta\varrho\mathbf u\cdot\nabla
    \text{\rm div~}\Delta^{-1}\big[\zeta\psi(\varrho)\mathbf u\big]_h\,
    dxdt.
\end{split}\end{equation}
Next, expression  \eqref{lebeg11} for $\mathbf H$ implies
\begin{multline}\label{alla7}
  \int_{Q_T}\varrho u_iu_j\frac{\partial \xi_i}{\partial x_j}\, dxdt=
 \int_{\Pi} u_j\frac{\partial\mathbf H}{\partial x_j}\cdot (\zeta\varrho\mathbf u)
 \, dxdt +\\
 \int_{\Pi}\varrho (\mathbf u\cdot \nabla\zeta)\, (\mathbf u\cdot \mathbf H)\, dxdt
=\int_{\Pi} u_j\frac{\partial\mathbf H}{\partial x_j}\cdot
(\zeta\varrho \mathbf u)
 \, dxdt-\Gamma^{(4)}
\end{multline}
Using equation  \eqref{moment3} we obtain the identity
\begin{equation*}
  \int_{\Pi} u_j
  \frac{\partial\mathbf H}{\partial x_j}\cdot (\zeta\varrho\mathbf u)\, dxdt=
  \int_{\Pi}u_j\,(\nabla F_1+\nabla^{\perp} F_2)\cdot \frac{\partial\mathbf H}{\partial x_j}\, dxdt.
\end{equation*}
In view of Corollary \ref{navier12}, the function $\varrho\mathbf u$
belongs to the class $L^2(\Pi)$. Obviously it is supported in
$\Omega\times (0,T)$. From this and formula \eqref{moment4} we
conclude that $\mathbf F\in L^2(0,T; W^{1,2}(B_R))$ for every ball
$B_R\subset \mathbb R^2$. Recall that $\mathbf u\in L^2(0,T;
W^{1,2}(\mathbb R^2))$ is compactly supported in $\Omega\times
(0,T)$. Finally notice that $\mathbf H\in L^\infty(0,T; C^k(\mathbb
R^2))$ for every $k\geq 0$. Hence we can integrate by parts to
obtain
\begin{equation}\label{alla8}\begin{split}
   \int_{\Pi} u_j\frac{\partial\mathbf H}{\partial x_j}
   \cdot (\zeta\varrho \mathbf u)\, dxdt=\int_{\Pi}\Big(F_2 \text{rot~}\frac{\partial\mathbf H}{\partial x_j}- F_1 \text{div~}\frac{\partial\mathbf H}{\partial x_j}
\Big)u_j\, dxdt-\\
\int_{\Pi}\Big( F_1\nabla u_j \cdot\frac{\partial\mathbf H}{\partial
x_j} + F_2\nabla^\perp u_j \cdot\frac{\partial\mathbf H}{\partial
x_j}\Big)\, dxdt.
\end{split}\end{equation}
Recall that $\mathbf H=\nabla\Delta^{-1} g$, where
$g=\big[\zeta\psi(\varrho)\big]_h\in L^\infty (0,T; C^3(\mathbb
R^2))$ is  supported in $Q_T$. It follows that
$$
\text{rot~}\frac{\partial\mathbf H}{\partial x_j}=0, \quad
\text{div~}\frac{\partial\mathbf H}{\partial
x_j}=\partial_{x_j}\big[\zeta\psi(\varrho)\big]_h.
$$
We thus get
\begin{multline*}
\int_{\Pi}\Big(F_2 \text{rot~}\frac{\partial\mathbf H}{\partial
x_j}- F_1 \text{div~}\frac{\partial\mathbf H}{\partial x_j}
\Big)u_j\, dxdt=
-\int_{\Pi} F_1u_j\partial_{ x_j}\big[\zeta\psi(\varrho)\big]_h=\\
-\int_{Q_T} F_1\text{div~}(\big[\zeta\psi(\varrho)\big]_h\mathbf
u)\,dxdt +\int_{Q_T}g F_1\text{div~}\mathbf u\,dxdt.
\end{multline*}
Noting that $F_1=\text{div~}\Delta^{-1}(\zeta\varrho \mathbf u)$ we
arrive at the identity
\begin{multline*}
\int_{\Pi}\Big(F_2 \text{rot~}\frac{\partial\mathbf H}{\partial
x_j}- F_1 \text{div~}\frac{\partial\mathbf H}{\partial x_j}
\Big)u_j\, dxdt=\\
\int_{\Pi}\zeta\varrho \mathbf u\cdot \nabla\text{div~}
\Delta^{-1}(\big[\zeta\psi(\varrho)\big]_h\mathbf
u)\,dxdt+\int_{\Pi}g F_1\text{div~}\mathbf u\,dxdt.
\end{multline*}
Inserting this equality into \eqref{alla8} and recalling the
expression \eqref{alla3} for $\Gamma^{(3)}$ we get
\begin{equation*}\begin{split}
   \int_{\Pi}\zeta\varrho  u_j\frac{\partial\mathbf H}{\partial x_j}\cdot
   \mathbf u\, dxdt=-\Gamma^{(3)}-\int_{\Pi}\zeta\varrho \mathbf u\cdot \nabla\text{div~}
\Delta^{-1}(\big[\zeta\psi(\varrho)\big]_h\mathbf u)\,dxdt.
\end{split}\end{equation*}
Inserting this result into \eqref{alla7} we finally obtain
\begin{equation}\label{alla10}\begin{split}
\int_{\Pi}\varrho u_iu_j\frac{\partial \xi_i}{\partial x_j}\, dxdt=
-\Gamma^{(3)}-\Gamma^{(4)}+\int_{\Pi}\zeta\varrho \mathbf u\cdot
\nabla\text{div~} \Delta^{-1}(\big[\zeta\psi(\varrho)\big]_h\mathbf
u)\,dxdt.
\end{split}\end{equation}
It remains to note that in view of  \eqref{alla1} and
\eqref{lebeg11} we have
\begin{equation}\label{alla11}\begin{split}
\int_{\Pi}p\,\text{\rm div~}\boldsymbol\xi
    dxdt=
    \int_{\Pi}\zeta p\big[\zeta\psi(\varrho\big]_h\, dxdt
-\Gamma^{(5)} .
\end{split}\end{equation}
Inserting \eqref{alla6}, \eqref{alla10}, and \eqref{alla11} into
\eqref{alla4} we obtain the desired identity \eqref{lebeg15}.
\end{proof}

\subsection{Step 3. Estimates of $\Gamma^{(i)}$ }
In this section we show that the quantities  $\Gamma^{(i)}$  in the
basic integral identity \eqref{lebeg15} are bounded by a constant,
depending only on the exponent $\lambda$, the cut-off function
$\zeta$, and the constant $E$ specified by Remark
\ref{stokesremark}.
 \begin{proposition}\label{anna1} Under the assumptions of Theorem \ref{lebegtheorem},
\begin{equation}\label{lebeg19}
\Gamma^{(i)}\leq c(\zeta, \lambda)E,
\end{equation}
where $c$ depends only on $\zeta$ and $\lambda$.
\end{proposition}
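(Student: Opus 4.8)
The plan is to bound each of the seven terms $\Gamma^{(i)}$ individually, splitting them into an ``easy'' group $\Gamma^{(4)},\Gamma^{(5)},\Gamma^{(6)},\Gamma^{(7)}$, handled by crude $L^\infty$ bounds on $\mathbf H$, and a ``borderline'' group $\Gamma^{(1)},\Gamma^{(2)},\Gamma^{(3)}$, handled by H\"older's inequality against the $L^p$--$L^q$ estimates for $\mathbf F$ of Lemmas \ref{momentlemma1}--\ref{momentlemma2}. For the easy group I would use only $\|\mathbf H\|_{L^\infty(Q_T)}\le c(\zeta)E$ and $\|\nabla\mathbf H\|_{L^\infty(0,T;L^2(\Omega))}\le c(\zeta)E$ (both from Lemma \ref{lebeglemma3}, the second because $1/\lambda\ge2$ and $\Omega$ is bounded) together with the energy inequality \eqref{navier7}. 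Indeed, since $p=\varrho+\varepsilon\varrho^\gamma\in L^\infty(0,T;L^1(\Omega))$ with norm $\le E$ by \eqref{navier7}, one gets $|\Gamma^{(5)}|\le c(\zeta)\|\mathbf H\|_{L^\infty(Q_T)}\int_{Q_T}p\,dxdt\le c(\zeta)E$; likewise $|\Gamma^{(4)}|\le c(\zeta)\|\mathbf H\|_{L^\infty(Q_T)}\int_{Q_T}\varrho|\mathbf u|^2\,dxdt\le c(\zeta)E$, and both terms of $\Gamma^{(7)}$ are at most $\|\mathbf H\|_{L^\infty}$ times $\|\varrho\mathbf u\|_{L^\infty(0,T;L^1(\Omega))}$ resp.\ $\|\varrho_0\mathbf u_0\|_{L^1(\Omega)}$, which are $\le E$ by \eqref{navier7} and Condition \ref{stokes6}. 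For $\Gamma^{(6)}$, writing $\boldsymbol\xi=\zeta\mathbf H$ so that $\nabla\boldsymbol\xi=\nabla\zeta\otimes\mathbf H+\zeta\nabla\mathbf H$ and $|\mathbb S(\mathbf u)|\le c|\nabla\mathbf u|$, one piece is $\le c(\zeta)E\,\|\nabla\mathbf u\|_{L^1(Q_T)}$ and the other $\le c\,\|\nabla\mathbf u\|_{L^2(Q_T)}\|\nabla\mathbf H\|_{L^2(Q_T)}$, both $\le c(\zeta)E$ by \eqref{navier7}, while the forcing term $\int_{Q_T}\varrho\mathbf f\cdot\boldsymbol\xi\,dxdt$ is treated as $\Gamma^{(5)}$.

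For the borderline group the decisive structural fact is that Lemma \ref{momentlemma2} puts $\mathbf F$ in $L^2(0,T;L^{3-\nu}(B_R))$ for \emph{every} $\nu>0$ --- i.e.\ $\mathbf F$ is ``almost'' in $L^2L^3$ --- while $\mathbf u\in L^2(0,T;W^{1,2}_0)$ and, by Lemmas \ref{lebeglemma2} and \ref{lebeglemma3}, $g=[\zeta\psi(\varrho)]_h$ and $\nabla\mathbf H$ lie in $L^\infty(0,T;L^{1/\lambda}(\Omega))$ with norms $\le c(\zeta)E$ uniformly in $h$ (uniformity because $[\cdot]_h$ is an $L^p$--contraction). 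For $\Gamma^{(1)}$, Lemma \ref{lebeglemma1} gives $[\zeta(\psi(\varrho)-\varrho\psi'(\varrho))\div\mathbf u]_h\in L^2(0,T;L^{2/(1+2\lambda)}(\mathbb R^2))$ with norm $\le c(\zeta,\lambda)E$ (again uniformly in $h$, supported in a fixed ball), which I pair with $F_1\in L^2(0,T;L^{2/(1-2\lambda)}(B_R))$: the spatial exponents $\tfrac{1+2\lambda}{2}$ and $\tfrac{1-2\lambda}{2}$ are conjugate, and $\tfrac{2}{1-2\lambda}<3$ exactly when $\lambda<1/6$, so Lemma \ref{momentlemma2} applies and $|\Gamma^{(1)}|\le c(\zeta,\lambda)E$. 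For $\Gamma^{(2)}$, Lemma \ref{lebeglemma1} gives $[\psi(\varrho)\nabla\zeta\cdot\mathbf u]_h\in L^2(0,T;L^{3}(B_R))$ (uniformly in $h$, norm $\le c(\zeta,\lambda)E$), paired against $F_1\in L^2(0,T;L^{3/2}(B_R))$ from Lemma \ref{momentlemma2} with $\nu=3/2$; the exponents $\tfrac13+\tfrac23=1$ in $x$ and $\tfrac12+\tfrac12=1$ in $t$ close, giving $|\Gamma^{(2)}|\le c(\zeta,\lambda)E$.

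Finally, $\Gamma^{(3)}$ is a sum of three integrals, each a product of three factors of the types $\{F_1\ \text{or}\ F_2\}$, $\{\div\mathbf u\ \text{or}\ \nabla\mathbf u\}$, and $\{g\ \text{or}\ \nabla\mathbf H\}$. For each, a triple H\"older inequality in $x$ over $B_R\supset\Omega$ (legitimate since $\nabla\mathbf u$ is supported in $\Omega$) with exponents $3-\nu$, $2$, $1/\lambda$ requires $\tfrac{1}{3-\nu}+\tfrac12+\lambda\le1$, which admits a choice $\nu>0$ precisely because $\lambda<1/6$ (so that $\tfrac13+\tfrac12+\lambda<1$), and in $t$ the exponents $2,2,\infty$ are conjugate; hence $|\Gamma^{(3)}|\le c(\zeta,\lambda)E$ by \eqref{navier7} and Lemmas \ref{lebeglemma2}, \ref{lebeglemma3}, \ref{momentlemma2}. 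The main obstacle is exactly this exponent bookkeeping in $\Gamma^{(1)}$ and $\Gamma^{(3)}$: one must check that every H\"older triple genuinely closes, and the computation shows that the only place where the hypothesis $\lambda<1/6$ is actually consumed is here --- the endpoint $\lambda=1/6$ would force $\mathbf F\in L^2L^3$, which is not delivered by Lemma \ref{momentlemma2}. Everything else reduces to assembling the preceding lemmas with the energy inequality \eqref{navier7}.
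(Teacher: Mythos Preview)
Your proposal is correct and follows essentially the same approach as the paper: the same split into an ``easy'' group $\Gamma^{(4)}$--$\Gamma^{(7)}$ handled via $\|\mathbf H\|_{L^\infty}$ and the energy inequality, and a ``borderline'' group $\Gamma^{(1)}$--$\Gamma^{(3)}$ handled by H\"older against $\mathbf F\in L^2(0,T;L^{3-\nu})$ from Lemma~\ref{momentlemma2}, with the condition $\lambda<1/6$ entering precisely where you say. The only cosmetic difference is that for $\Gamma^{(2)}$ you pair $L^3$ with $L^{3/2}$ while the paper pairs $L^{3-\nu}$ with $L^2$ on the bounded support; both close.
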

\begin{proof} Let us estimate $\Gamma^{(1)}$ and $\Gamma^{(2)}$.
Since $\lambda<1/6$, we can choose $\nu>0$, depending on $\lambda$,
such that $(3-\nu)^{-1}+(1+2\lambda)2^{-1}=1$.  Lemmas
\ref{momentlemma2}, \ref{lebeglemma1} and the H\"{o}lder inequality
imply
\begin{equation}\label{lebeg21}\begin{split}
  | \Gamma^{(1)}|\leq \|F_1\|_{L^2(0,T; L^{3-\nu}(\mathbb R^2))}\, \| \big[\zeta(\psi(\varrho)-
  \varrho\psi'(\varrho))\text{~div~}\mathbf u\big]_h\|_{L^2(0,T; L^{2/(1+2\lambda)}(\mathbb R^2))}\leq\\
\|F_1\|_{L^2(0,T; L^{3-\nu}(\mathbb R^2))}\, \| \zeta(\psi(\varrho)-
  \varrho\psi'(\varrho))\text{~div~}\mathbf u\|_{L^2(0,T; L^{2/(1+2\lambda)}(\mathbb R^2))} \leq c(\lambda, \zeta) E.
\end{split}\end{equation}
Applying   Lemmas \ref{momentlemma2} and \ref{lebeglemma1} once more
we obtain
\begin{equation}\label{lebeg22}\begin{split}
  | \Gamma^{(2)}|\leq \|F_1\|_{L^2(0,T; L^{3-\nu}(\mathbb R^2))}\, \| \big[\psi(\varrho)\nabla\zeta\cdot\mathbf u\big]_h\|_{L^2(0,T; L^{2}(\mathbb R^2))}\leq\\
\|F_1\|_{L^2(0,T; L^{3-\nu}(\mathbb R^2))}\,
\|\psi(\varrho)\nabla\zeta\cdot\mathbf u\|_{L^2(0,T; L^{2}(\mathbb
R^2))} \leq c(\lambda, \zeta) E.
\end{split}\end{equation}
 Our next task is to estimate $\Gamma^{(3)}$ and $\Gamma^{(4)}$.
 Since $\mathbf u$ is
supported in $\Omega\times[0,T]$, we have
\begin{equation}\label{lebeg28}
|\Gamma^{(3)}|\leq \int_{\Omega\times (0,T)}|\mathbf F|\, |\nabla
\mathbf u|(|g|+|\nabla \mathbf H|)\, dxdt
\end{equation}
It follows from Lemmas \ref{lebeglemma2} and \ref{lebeglemma3} that
\begin{equation}\label{lebeg29}
\|g\|_{L^\infty(0,T; L^{1/\lambda}(\mathbb R^2))}+\|\nabla \mathbf
H\|_{L^\infty(0,T; L^{1/\lambda}(\mathbb R^2))}\leq
c(\zeta,\lambda)E.
\end{equation}
On the other hand, energy estimate \eqref{navier7} and Lemma
\ref{momentlemma2} imply
\begin{equation}\label{lebeg30}
\|\mathbf F\|_{L^2(0,T; L^{3-\kappa}(\Omega))}\leq c(\zeta,
\kappa)E, \quad \|\nabla \mathbf u\|_{L^2(0,T; L^{2}(\Omega))}\leq
c(\zeta,\lambda)E,
\end{equation}
where $\kappa$ is an arbitrary positive number. Since $\lambda<1/6$,
we can choose $\kappa$ such that $(3-\kappa)^{-1}+\lambda+2^{-1}=1$.
Applying the H\"{o}lder inequality and using \eqref{lebeg29},
\eqref{lebeg30} we obtain
\begin{multline*}
\int_{\Omega\times (0,T)}|\mathbf F|\, |\nabla \mathbf u|(|g|+
|\nabla \mathbf H|)\, dxdt\leq\\
\|\mathbf F\|_{L^2(0,T; L^{3-\kappa}(\Omega))}\,\|\nabla \mathbf
u\|_{L^2(0,T; L^{2}(\Omega))} (\|g\|_{L^\infty(0,T;
L^{1/\lambda}(\mathbb R^2))}+\|\nabla \mathbf H\|_{L^\infty(0,T;
L^{1/\lambda}(\mathbb R^2))})\leq c(\zeta,\lambda)E.
\end{multline*}
which leads to estimate \eqref{lebeg19} for
   $\Gamma^{(3)}$.
In order to estimate $\Gamma^{(4)}$ notice that in view of Lemma
\ref{lebeglemma3} and energy estimate \eqref{navier7}, we have
\begin{multline*}
|\Gamma^{(4)}| \leq c(\zeta)\int_{\Omega\times (0,T)} |\mathbf H|\varrho |\mathbf u|^2\,dxdt \leq\\
 \leq c(\zeta)\|\mathbf H\|_{L^\infty(\Omega\times (0,T))}\int_{\Omega\times (0,T)} \varrho |\mathbf u|^2\,dxdt \leq c(\zeta,\lambda)E.
\end{multline*}
 Next  we employ
Lemma \ref{lebeglemma3} and estimate \eqref{navier7} to obtain
\begin{multline*}
|\Gamma^{(5)}|\leq \int_{\mathbb R^2\times (0,T)}\zeta
p|\nabla\zeta|\, |\mathbf H|\, dxdt
 \leq c(\zeta)\|\mathbf H\|_{L^\infty(\Omega\times (0,T))}\int_{\Omega\times (0,T)}
  p\,dxdt \leq c(\zeta,\lambda)E.
\end{multline*}
It remains to estimate  $\Gamma^{(6)}$ and $\Gamma^{(7)}$.
Expression  \eqref{lebeg11} for the vector field $\boldsymbol\xi$,
and expression \eqref{alla1} for $\Gamma^{(6)}$ yield the estimate
\begin{multline}\label{lebeg34}
|\Gamma^{(6)}|\leq \Big|\int_{\Pi}\mathbb S(\mathbf
    u):\nabla\boldsymbol\xi\,dxdt\Big|+\Big|\int_{\Pi}
\varrho\mathbf f\cdot\boldsymbol\xi\, dxdt\Big| \leq\\
\int_{\Pi}\zeta|\mathbb S(\mathbf
    u)|\, |H|\,dxdt+\int_{\Pi}(\zeta+|\nabla\zeta|)|\mathbf H|(|\mathbb S(\mathbf
    u)|+\varrho |\mathbf f|)\, dxdt\leq\\
    c(\zeta)\int_{Q_T}|\nabla\mathbf u|(|\mathbf
    H|+|\nabla\mathbf H|)\, dxdt +c(\zeta)E\int_{Q_T}\varrho|\mathbf H|\, dxdt.
\end{multline}
On the other hand, energy estimate \eqref{navier7} yields
\begin{multline*}
\int_{Q_T}|\nabla\mathbf u|(|\mathbf
    H|+|\nabla\mathbf H|)\, dxdt \leq c(\zeta)
    \|\nabla\mathbf u\|_{L^2(Q_T)}
    \|\mathbf H\|_{L^2(0,T; W^{1,2}(\Omega))}
\leq\\ c(\zeta)E\|\mathbf H\|_{L^2(0,T; W^{1,2}(\Omega))}
\end{multline*}
Next,  Lemma \ref{lebeglemma3} and the inequality $1/\lambda>2$
imply
\begin{equation*}
 \|\mathbf H\|_{L^2(0,T; W^{1,2}(\Omega))}\leq
  \|\mathbf H\|_{L^\infty(0,T; L^\infty(\Omega))}+
\|\nabla \mathbf H\|_{L^\infty(0,T; L^{1/\lambda}(\Omega))}\leq
c(\zeta,\lambda) E.
\end{equation*}
We thus get
\begin{equation}\label{lebeg35}
\int_{Q_T}|\nabla\mathbf u|(|\mathbf
    H|+|\nabla\mathbf H|)\, dxdt \leq c(\zeta,\lambda)E
\end{equation}
Finally notice that
\begin{equation*}
\int_{Q_T}\varrho|\mathbf H|\, dxdt\leq \|\mathbf
    H\|_{L^\infty(Q_T)}\,\|\varrho\|_{L^\infty(0,T; L^1(\Omega))}
\leq c(\zeta, \lambda)E.
\end{equation*}
Inserting  this estimate along with \eqref{lebeg35} into
\eqref{lebeg34} we arrive at the desired estimate  \eqref{lebeg19}
for   $\Gamma^{(6)}$. Finally,  expression \eqref{alla0} for
$\Gamma_T$, Lemma
 \ref{lebeglemma3}, and the energy estimate
\eqref{navier7} imply
\begin{equation}\label{lebeg37}
|\Gamma_T(\tau)|\leq c(\zeta)\|\mathbf
    H\|_{L^\infty(Q_T)}\|\varrho\|_{L^\infty(0,T;
    L^1(\Omega))}\leq c(\zeta)E.
\end{equation}
It follows from this that $\Gamma^{(7)}=\lim\limits_{\tau\to
0}\Gamma_T(\tau)$ satisfies inequality \eqref{lebeg19}.
\end{proof}

\subsection{Step 4. Proof of Theorem \ref{lebegtheorem}}
The proof is based on  Propositions \ref{lebegproposition} and
\ref{anna1}. First we show that the quantity $I_h$ in  identity
\eqref{lebeg15} tends to zero as $h\to 0$. We begin  with the
observation that
\begin{equation*}\begin{split}
    I_h=\int_{\Pi}\zeta\varrho\mathbf u
    \cdot\nabla\,\text{\rm div}\,
    \Delta^{-1}\Big(\,\big[\zeta\psi(\varrho)\mathbf u\big]_h-
    \zeta\psi(\varrho)\mathbf
    u\Big)\, dxdt-\\\int_{\Pi}\zeta\varrho\mathbf u
    \cdot\nabla\,\text{\rm div}\,\Delta^{-1}\Big(\,\big(
    \big[\zeta\psi(\varrho)\big]_h- \zeta\psi(\varrho)\big)\mathbf
    u\Big)\, dxdt.
\end{split}\end{equation*}
Since the Riesz operator $\nabla\,\text{\rm div}\,\Delta^{-1}:
L^2(\mathbb R^2)\to L^2(\mathbb R^2)$ is bounded and $\zeta$ is
supported in $\Omega$, we have
\begin{equation}\label{lebeg42}
    |I_h|^2\leq c\|\varrho\mathbf u\|_{L^2(Q_T)}^2
    \int_0^T (J_h(t)+L_h(t))\, dt ,
\end{equation}
where
$$ J_h(t)=
\|\big[\zeta\psi(\varrho)\mathbf u\big]_h(t)-
\zeta\psi(\varrho)\mathbf u(t)\|_{ L^{2}(\Omega)}^2, \,\,
L_h(t)=\|\big[\zeta\psi(\varrho)\big]_h\mathbf
u(t)-\zeta\psi(\varrho)\mathbf u(t) \|_{ L^{2}(\Omega)}^2.
$$
In view of Corollary \ref{navier13} the vector field $\varrho\mathbf
u$ belongs to the class $L^2(Q_T)$. Hence it suffices to
 prove that  the sequence $J_h+L_h$ has an integrable majorant and
tends to $0$ a.e. in $(0,T)$ as $h\to 0$.  Inequality \eqref{lebeg4}
in Lemma \ref{lebeglemma1} implies that $\zeta\psi(\varrho)\mathbf
u\in L^2(0,T; L^2(\mathbb R^2))$. It follows from  properties of the
mollifier that $
 J_h(t)\to 0$ as $h\to
 0$ a.e. in $(0,T)$.
On the other hand, we have
$$
J_h(t)\leq
 \|\big[\psi(\varrho)\mathbf u\big]_h(t)\|_{ L^{2}(\Omega)}+
 \|\psi(\varrho)\mathbf u(t)\|_{ L^{2}(\Omega)}\leq
  2\|\psi(\varrho)\mathbf u(t)\|_{ L^{2}(\Omega)}
 $$
By \eqref{lebeg4}, the right hand side is integrable  over $[0,T]$
and is independent of $h$. Hence the sequence  $J_h\to 0$ a.e. in
$(0,T)$ and has an integrable majorant. Next, the H\"{o}lder
inequality implies
$$
L_h(t)\leq \|\big[\zeta\psi(\varrho)\big]_h(t)-\zeta\psi(\varrho)(t)
\|_{ L^{1/\lambda}(\Omega)}^2\|\mathbf
u(t)\|_{L^{2/(1-2\lambda)}(\Omega)}^2
$$
Since the embedding $W^{1,2}_0(\Omega)\hookrightarrow
L^{2/(1-2\lambda)}(\Omega)$ is bounded, we have
\begin{equation}\label{lebeg52}
L_h(t)\leq
c(\lambda)\|\big[\zeta\psi(\varrho)\big]_h(t)-\zeta\psi(\varrho)(t)
\|_{ L^{1/\lambda}(\Omega)}^2\|\mathbf u(t)\|_{W^{1,2}_0(\Omega)}^2.
\end{equation}
Since $\zeta\psi(\varrho)\in L^\infty(0,T; L^{1/\lambda}(\Omega))$,
we have
$$
\|\big[\zeta\psi(\varrho)\big]_h(t)-\zeta\psi(\varrho)(t) \|_{
L^{1/\lambda}(\Omega)}^2\to 0\text{~~as~~} h\to 0\text{~~for a.e.
~~}t\in (0,T).
$$
Hence $L_h(t)\to 0$ a.e. in $(0,T)$. Notice that
\begin{multline*}
\|\big[\zeta\psi(\varrho)\big]_h(t)-\zeta\psi(\varrho)(t) \|_{
L^{1/\lambda}(\Omega)}\leq \\\|\big[\zeta\psi(\varrho)\big]_h(t)
\|_{ L^{1/\lambda}(\Omega)}+\|\zeta\psi(\varrho)\big(t) \|_{
L^{1/\lambda}(\Omega)}\leq 2 \|\zeta\psi(\varrho)\big(t) \|_{
L^{1/\lambda}(\Omega)}.
\end{multline*}
Combing this result with \eqref{lebeg52} and recalling that
$\zeta\psi(\varrho)\in L^\infty(0,T; L^{1/\lambda}(\Omega))$ we
obtain $ L_h(t)\leq c\|\mathbf u(t)\|_{W^{1,2}_0(\Omega)}^2. $ In
view of the  energy estimate \eqref{navier7} the right side of this
inequality is integrable over $(0,T)$. Hence the sequence $L_h$ has
an integrable majorant. Applying the Lebesgue dominant convergence
Theorem we arrive at the relation
\begin{equation*}
    \int_0^T (J_h(t)+L_h(t))\, dt \to 0 \text{~~as~~} h\to 0.
\end{equation*}
From this and \eqref{lebeg42} we conclude that
 $I_h\to 0$ as $h\to 0$. Next,  Propositions \ref{lebegproposition} and
\ref{anna1} imply
\begin{equation}\label{lebeg43}
    \int_{Q_T} \zeta p\big[\zeta\psi(\varrho)\big]_h\, dxdt
    \leq I_h +c(\zeta, \lambda)E.
\end{equation}
The functions $\big[\zeta\psi(\varrho)\big]_h$ are nonnegative and
converge a.e. in $Q_T$ to $\zeta\psi(\varrho)$. Letting $h\to 0$ in
\eqref{lebeg43} and applying the Fatou  Theorem we arrive at the
inequality
\begin{equation*}
    \int_{Q_T} \zeta^2 p\psi(\varrho)\, dxdt
    \leq c(\zeta, \lambda)E.
\end{equation*}
It remains to note that $\psi(\varrho)\geq c\varrho^\lambda-1$ and
the theorem follows.

\section{Proof of Theorems \ref{stokes14} and
\ref{stokes16}}\label{final}
\subsection{Proof of Theorem \ref{stokes14}}
 By Proposition \ref{navierproposition}, for
every $\varepsilon>0$ regularized problem \eqref{navier1} has a
solution $( \varrho_\epsilon, \mathbf u_\epsilon)$ which admits
estimates \eqref{navier7} and satisfies integral identities
\eqref{navier8}, \eqref{navier9}.

\begin{lemma}\label{finallemma1}
Let $\lambda\in [0,1/6)$ and $\Omega'\Subset \Omega$. Then there are
exponent $r, p\in (2,\infty)$ and $q,s, z\in (1,\infty)$ such that
\begin{gather}\label{final1}
\|\varrho_\varepsilon\|_{L^{1+\lambda}(\Omega'\times
(0,T))}+\varepsilon\int_{\Omega'\times(0,T)}
{\varrho_\varepsilon}^{\gamma+\lambda}\leq C,\\\label{final2}
\|\varrho_\varepsilon\|_{L^{r}(0,T;L^s(\Omega'))}+
\|\varrho_\varepsilon\mathbf
u_\varepsilon\|_{L^{p}(0,T;L^z(\Omega'))}+\|\varrho_\varepsilon|\mathbf
u_\varepsilon|^2\|_{L^{q}(\Omega'\times (0,T))}\leq C,
\end{gather}
where $C$ is independent of $\varepsilon$. Moreover, the sequences
$\varrho_\varepsilon\mathbf u_\varepsilon$ and $\varrho_\varepsilon$
are equi-integrable.
\end{lemma}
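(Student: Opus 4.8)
The plan is to reduce everything to the uniform energy bound \eqref{navier7} together with the one $\varepsilon$-independent local bound supplied by Theorem \ref{lebegtheorem}. First I would record the $\varepsilon$-uniform consequences of \eqref{navier7}: $\|\mathbf u_\varepsilon\|_{L^2(0,T;W^{1,2}_0(\Omega))}\le E$, $\|\varrho_\varepsilon|\mathbf u_\varepsilon|^2\|_{L^\infty(0,T;L^1(\Omega))}\le E$, $\|\varrho_\varepsilon\log(1+\varrho_\varepsilon)\|_{L^\infty(0,T;L^1(\Omega))}\le E$, and, via conservation of mass in \eqref{navier3}, $\|\varrho_\varepsilon(t)\|_{L^1(\Omega)}=\|\varrho_0\|_{L^1(\Omega)}\le E$ for a.e.\ $t$. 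Then, fixing $\lambda_0\in(\max\{\lambda,0\},1/6)$, picking $\zeta\in C^\infty_0(\Omega)$ with $0\le\zeta\le1$ and $\zeta\equiv1$ on $\Omega'$, and applying Theorem \ref{lebegtheorem} with exponent $\lambda_0$, the identity $p(\varrho)=\varrho+\varepsilon\varrho^\gamma$ and $\zeta\equiv1$ on $\Omega'$ give
\[
 \int_{\Omega'\times(0,T)}\varrho_\varepsilon^{\,1+\lambda_0}\,dxdt+\varepsilon\int_{\Omega'\times(0,T)}\varrho_\varepsilon^{\,\gamma+\lambda_0}\,dxdt\le\int_{Q_T}\zeta^2p(\varrho_\varepsilon)\varrho_\varepsilon^{\lambda_0}\,dxdt\le c(\zeta,\lambda_0)E .
\]
Bounding $\varrho_\varepsilon^{1+\lambda}\le\varrho_\varepsilon^{1+\lambda_0}+1$ and $\varepsilon\varrho_\varepsilon^{\gamma+\lambda}\le\varepsilon\varrho_\varepsilon^{\gamma+\lambda_0}+1$ then yields \eqref{final1}, the case $\lambda=0$ being already contained in the bounds just listed. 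Throughout I write $Q_T'=\Omega'\times(0,T)$ and use freely that in two dimensions $W^{1,2}_0(\Omega)\hookrightarrow L^m(\Omega)$ for every $m<\infty$.

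Next I would get the estimate for $\varrho_\varepsilon$ by interpolating the uniform $L^\infty(0,T;L^1)$ bound against the local $L^{1+\lambda_0}(Q_T')$ bound. Log-convexity of $L^p(\Omega')$-norms gives, for a.e.\ $t$ and $1/s=\theta+(1-\theta)/(1+\lambda_0)$,
\[
 \|\varrho_\varepsilon(t)\|_{L^s(\Omega')}\le\|\varrho_\varepsilon(t)\|_{L^1(\Omega')}^{\theta}\,\|\varrho_\varepsilon(t)\|_{L^{1+\lambda_0}(\Omega')}^{1-\theta}\le E^{\theta}\,\|\varrho_\varepsilon(t)\|_{L^{1+\lambda_0}(\Omega')}^{1-\theta},
\]
so raising to the power $r=(1+\lambda_0)/(1-\theta)$ and integrating in $t$ bounds $\|\varrho_\varepsilon\|_{L^r(0,T;L^s(\Omega'))}$ by $E^{\theta}\|\varrho_\varepsilon\|_{L^{1+\lambda_0}(Q_T')}^{1-\theta}$, uniformly in $\varepsilon$; the choice $\theta=3/4$ makes $r=4(1+\lambda_0)>2$ and $1/s=3/4+(1/4)/(1+\lambda_0)<1$, i.e.\ $s>1$. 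Equi-integrability of $\varrho_\varepsilon$ over $Q_T$ I would get from the de la Vallée–Poussin criterion applied to the uniform bound on $\|\varrho_\varepsilon\log(1+\varrho_\varepsilon)\|_{L^\infty(0,T;L^1(\Omega))}$ (and the $L^{1+\lambda_0}(Q_T')$ bound already gives equi-integrability on $Q_T'$).

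For the two remaining quantities I would set $w_\varepsilon=\varrho_\varepsilon^{1/2}|\mathbf u_\varepsilon|$, so that $\varrho_\varepsilon|\mathbf u_\varepsilon|^2=w_\varepsilon^2$, $|\varrho_\varepsilon\mathbf u_\varepsilon|=\varrho_\varepsilon^{1/2}w_\varepsilon$, and $\|w_\varepsilon\|_{L^\infty(0,T;L^2(\Omega))}\le E^{1/2}$. Hölder in space with exponents $2(1+\lambda_0)$ and $m$, together with the Sobolev embedding, gives for a.e.\ $t$
\[
 \|w_\varepsilon(t)\|_{L^k(\Omega')}\le\|\varrho_\varepsilon(t)\|_{L^{1+\lambda_0}(\Omega')}^{1/2}\,\|\mathbf u_\varepsilon(t)\|_{L^m(\Omega')},\qquad\tfrac1k=\tfrac1{2(1+\lambda_0)}+\tfrac1m ,
\]
and choosing $m$ large enough that $k>2$, a Hölder estimate in time (testing $\varrho_\varepsilon$ against its $L^{1+\lambda_0}(Q_T')$ bound and $\mathbf u_\varepsilon$ against its $L^2(0,T;L^m)$ bound) gives $w_\varepsilon\in L^j(0,T;L^k(\Omega'))$ with $j=2(1+\lambda_0)/(2+\lambda_0)>1$, uniformly in $\varepsilon$. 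Interpolating this with $w_\varepsilon\in L^\infty(0,T;L^2(\Omega'))$ puts $w_\varepsilon$ in $L^n(Q_T')$ with $n=j+2-2j/k$, and $n>2$ precisely because $k>2$; hence $\varrho_\varepsilon|\mathbf u_\varepsilon|^2=w_\varepsilon^2\in L^q(Q_T')$ with $q=n/2>1$, uniformly. Likewise $|\varrho_\varepsilon\mathbf u_\varepsilon|=\varrho_\varepsilon^{1/2}w_\varepsilon$ and Hölder in space give $\|\varrho_\varepsilon\mathbf u_\varepsilon(t)\|_{L^z(\Omega')}\le E^{1/2}\|\varrho_\varepsilon(t)\|_{L^{1+\lambda_0}(\Omega')}^{1/2}$ with $1/z=1/(2(1+\lambda_0))+1/2$, and raising to the power $p=2(1+\lambda_0)>2$ and integrating gives $\varrho_\varepsilon\mathbf u_\varepsilon\in L^p(0,T;L^z(\Omega'))$ with $z=2(1+\lambda_0)/(2+\lambda_0)>1$, uniformly; this completes \eqref{final2}. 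For equi-integrability of $\varrho_\varepsilon\mathbf u_\varepsilon$ on $Q_T$ I would use, for any measurable $A\subset Q_T$ and any $K>0$,
\[
 \int_A|\varrho_\varepsilon\mathbf u_\varepsilon|\,dxdt\le K\int_A\varrho_\varepsilon\,dxdt+\frac1K\int_{Q_T}\varrho_\varepsilon|\mathbf u_\varepsilon|^2\,dxdt\le K\int_A\varrho_\varepsilon\,dxdt+\frac{ET}{K} ,
\]
taking $K$ large and then $|A|$ small and invoking the equi-integrability of $\varrho_\varepsilon$.

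The hard part will be the bound $\varrho_\varepsilon|\mathbf u_\varepsilon|^2\in L^q(Q_T')$ with $q>1$ uniformly in $\varepsilon$: a direct Hölder estimate using only $\varrho_\varepsilon\in L^{1+\lambda_0}(Q_T')$ and $\mathbf u_\varepsilon\in L^2(0,T;W^{1,2}_0(\Omega))$ forces the resulting exponent below $1$, so one must genuinely exploit the kinetic-energy bound $w_\varepsilon\in L^\infty(0,T;L^2(\Omega))$ through the mixed-norm interpolation above (where the gain $n>2$ is exactly what the excess integrability $k>2$ of $w_\varepsilon$ in space buys). Everything else is routine bookkeeping with Hölder's inequality and the two-dimensional Sobolev embedding.
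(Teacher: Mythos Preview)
Your proposal is correct and follows essentially the same route as the paper: both derive \eqref{final1} from Theorem~\ref{lebegtheorem}, get the $L^r(0,T;L^s)$ density bound by interpolating $L^\infty(0,T;L^1)$ against $L^{1+\lambda}(Q_T')$, and obtain equi-integrability from the $\varrho\log(1+\varrho)$ bound together with a Cauchy--Schwarz/AM--GM splitting for $\varrho\mathbf u$. The only organizational difference is in the momentum and kinetic-energy estimates: the paper factors $\varrho|\mathbf u|=\varrho^{\mu}(\varrho|\mathbf u|^2)^{\beta}|\mathbf u|^{\nu}$, bounds $\varrho\mathbf u$ first in $L^p(0,T;L^z)$ with $p>2$, and then gets $\varrho|\mathbf u|^2=(\varrho\mathbf u)\cdot\mathbf u$ by one more H\"older step, whereas you work directly with $w_\varepsilon=\varrho_\varepsilon^{1/2}|\mathbf u_\varepsilon|$ and a mixed-norm interpolation. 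Both are just different bookkeeping for the same three uniform inputs (the $L^{1+\lambda_0}$ density bound, the $L^\infty(0,T;L^1)$ kinetic-energy bound, and the $L^2(0,T;W^{1,2}_0)$ velocity bound combined with the two-dimensional Sobolev embedding), and your exponent checks ($k>2\Rightarrow n>2$, $p=2(1+\lambda_0)>2$, $z>1$) are all in order.
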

\begin{proof} Fix an arbitrary $\Omega'\Subset \Omega$. Choose a nonnegative
function $\zeta\in C^\infty_0(\mathbb R^2)$ with the properties:
 $\zeta$ is compactly supported in $\Omega$ and $\zeta=1$ in
$\Omega'$. Notice  that $\lambda$, $\zeta$, and $\varrho_\varepsilon
$ meet all requirements of Theorem \ref{lebegtheorem}. Hence
$p(\varrho_\varepsilon)$ satisfy  inequality \eqref{lebeg1}.
 It is easy to see that estimates \eqref{final1} follows from \eqref{lebeg1} and the formula
 $p(\varrho)=\varrho_\varepsilon+\varrho_\varepsilon^\gamma$. Next choose an arbitrary $r\in (2,\infty)$
 and set $s=r/(r-\lambda)>1$ and $\alpha=(1+\lambda)/r\in (0,1)$.  Obviously
$$
(1-\alpha)/\infty+\alpha/(1+\lambda)=1/r, \quad 1-\alpha
+\alpha/(1+\lambda)=1/s.
$$
From this, inequality \eqref{final1}, and the interpolation
inequality we obtain
\begin{equation}\label{final3}
\|\varrho_\varepsilon\|_{L^{r}(0,T;L^s(\Omega'))}\leq
\|\varrho_\varepsilon\|_{L^{\infty}(0,T;L^1(\Omega'))}^{1-\alpha}\|\varrho_\varepsilon\|_{L^{1+\lambda}
(0,T;L^{1+\lambda}(\Omega'))}^\alpha<C,
\end{equation}
which gives the estimate \eqref{final2} for $\varrho_\varepsilon$.
In order to estimate the quantity $\varrho_\varepsilon|\mathbf
u_\varepsilon|$, represent it in the form
\begin{equation}\label{final4}
 \varrho_\varepsilon|\mathbf u_\varepsilon|=\varrho_\varepsilon^\mu  (\varrho_\varepsilon|\mathbf u_\varepsilon|^2)^\beta |\mathbf u_\varepsilon|^\nu.
\end{equation}
Let us show that there exist exponents $\mu\in (1/2,1)$, $\beta,
\nu\in (0,1)$ and $p,z,\sigma\in (1, \infty)$ with the properties
\begin{equation}\label{final5}\begin{split}
     \beta=1-\mu, \quad \nu+2\beta=1, \quad \text{i.e.,~~} \nu=2\mu-1,\\
\mu/r+\nu/2=1/p, \quad \mu/s+\beta+\nu/\sigma=1/z.
\end{split}\end{equation}
To this end notice that these relations can be equivalently
rewritten in the form
\begin{equation*}\begin{split}
1/p=(2\mu-1)/2+\mu/r, \,\,1/z=1+\mu(1/s-1)+(2\mu-1)/\sigma, \,\,
\beta=1-\mu,\,\,  \nu=2\mu-1,
\end{split}\end{equation*}
which gives $\mu=(1/2+1/p)(1+1/r)^{-1}$. The inequalities
$1/2<\mu<1$ are fulfilled if and only if $2r/(r+2)<p<2r$. Since
$r>2$, there exists $p>2$ satisfying these inequalities. On the
other hand, it follows from $s>1$ that $0<1+\mu(1/s-1)<1$. Hence
there is $\sigma\in (1,\infty)$ such that $z\in (1,\infty)$. This
completes the proof of the existence of  exponents satisfying
\eqref{final5}. The H\"{o}lder inequality,  estimate \eqref{final1},
and energy estimate \eqref{navier7} imply
\begin{equation*}\begin{split}
    \|\varrho_\varepsilon\mathbf u_\varepsilon\|_{L^p(0,T;L^z(\Omega'))}\leq \\
\|\varrho_\varepsilon^\mu\|_{L^{r/\mu}(0,T;L^{s/\mu}(\Omega'))}
\|\varrho_\varepsilon^\beta|\mathbf
u_\varepsilon|^{2\beta}\|_{L^\infty(0,T;L^{1/\beta}(\Omega'))}
\||\mathbf
u_\varepsilon|^\nu\|_{L^{2/\nu}(0,T;L^{\sigma/\nu}(\Omega'))}\\=
\|\varrho_\varepsilon\|_{L^{r}(0,T;L^{s}(\Omega'))}^\mu
\|\varrho_\varepsilon|\mathbf
u_\varepsilon|^{2}\|_{L^\infty(0,T;L^{1}(\Omega'))}^\beta \|\mathbf
u_\varepsilon\|_{L^{2}(0,T;L^{\sigma}(\Omega'))}^\nu\leq C \|\mathbf
u_\varepsilon\|_{L^{2}(0,T;L^{\sigma}(\Omega'))}^\nu.
\end{split}\end{equation*}
Recall that the embedding $W^{1,2}_0(\Omega)\hookrightarrow
L^\sigma(\Omega)$ is bounded for every $\sigma\in [1, \infty)$. It
follows from this and energy estimate \eqref{navier7} that
$$
\|\mathbf u_\varepsilon\|_{L^{2}(0,T;L^{\sigma}(\Omega'))}\leq
c\|\mathbf u_\varepsilon\|_{L^{2}(0,T;W^{1,2}_0(\Omega))}\leq C,
$$
which leads to the estimate for $\varrho_\varepsilon\mathbf
u_\varepsilon$ in \eqref{final2}. Now our task is  to estimate the
kinetic energy density $\varrho_\varepsilon|\mathbf
u_\varepsilon|^2$ Since $p>2$ and $z>1$ there are $\kappa_1,
\kappa_2, \omega\in (1,\infty)$ such that $1/p+1/2= 1/\kappa_1$ and
$1/z+1/\omega=1/\kappa_2$. Set $q=\min\{\kappa_i\}$. Applying the
H\"{o}lder inequality and using estimate \eqref{final2} for
$\varrho_\varepsilon\mathbf u_\varepsilon$ we obtain
\begin{equation*}\begin{split}
    \|\varrho_\varepsilon|\mathbf u_\varepsilon|^2\|_{L^q(\Omega'\times(0,T))}\leq
    C \|\varrho_\varepsilon|\mathbf u_\varepsilon|^2\|_{L^{\kappa_1}(0,T; L^{\kappa_2}(\Omega'))}\\
\leq c\|\varrho_\varepsilon \mathbf
u_\varepsilon|\|_{L^p(0,T;L^z(\Omega'))} \|\mathbf
u_\varepsilon\|_{L^{2}(0,T;L^{\omega}(\Omega'))}\leq C \|\mathbf
u_\varepsilon\|_{L^{2}(0,T;W^{1,2}_0(\Omega))}\leq C.
\end{split}\end{equation*}
This completes the proof of \eqref{final2}. It remains to show that
the sequences $\varrho_\varepsilon$ and $\varrho_\varepsilon\mathbf
u_\varepsilon$ are equi-integrable in $Q_T$. By energy estimate
\eqref{navier7}, the sequence $\varrho_\varepsilon\log(1+
\varrho_\varepsilon)$ is bounded in $L^1(Q_T)$.  Hence this sequence
is equi-integrable. This means that for every $\varkappa>0$ there is
$\delta(\varkappa)$, depending on $\epsilon $ only, such that the
inequality
$$
\int_A\varrho_\varepsilon\, dxdt\leq \varkappa
$$
hold for every $A\subset Q_T$ such that $\text{\rm
meas~}A<\delta(\varkappa)$.  By the Cauchy inequality and energy
estimate \eqref{navier7}, we have
$$
\int_A\varrho_\varepsilon|\mathbf u_\epsilon|\, dxdt\leq
\Big(\int_A\varrho_\varepsilon\, dxdt\Big)^{1/2}\Big(\int_{Q_T}
\varrho_\varepsilon|\mathbf u_\varepsilon|^2\,dxdt \Big)^{1/2}\leq E
\sqrt{\varkappa}.
$$
which yields the equi-integrability of the sequence
 $\varrho_\varepsilon\mathbf u_\epsilon$.
\end{proof}

Let us turn to the proof of Theorem \ref{stokes14}. It follows from
energy estimates \eqref{navier7}  and Lemma \ref{finallemma1} that,
after passing to a subsequence if necessary, we can assume that
there are integrable functions $\varrho$, $\mathbf u$,
$\overline{\varrho\mathbf u}$, and $\overline{\varrho \mathbf
u\otimes\mathbf u}$ with the properties
\begin{equation}\label{final6}\begin{split}
\varrho_\varepsilon \rightharpoonup \varrho,\quad
\varrho_\varepsilon\mathbf u_\varepsilon\rightharpoonup
\overline{\varrho\mathbf u}\quad &\text{weakly in~}
L^1(\Omega\times (0,T)),\\
\mathbf{u}_\varepsilon \rightharpoonup \mathbf{u}\quad &\text{weakly
in~} L^2(0,T; W^{1,2}_0(\Omega)),
\end{split}\end{equation}
For every compact set $\Omega'\subset\Omega$, we have
\begin{equation}\label{final7}\begin{split}
\varrho_\varepsilon \rightharpoonup \varrho\text{~weakly in~}
L^r(0,T; L^s(\Omega')),\,\,\varrho_\varepsilon
\mathbf{u}_\varepsilon \rightharpoonup \overline{\varrho\mathbf{u}}
\text{~weakly in~} L^p(0,T;
L^{z}(\Omega')),\\
\varrho_\varepsilon\mathbf u_\varepsilon\otimes
 \mathbf u_\varepsilon\rightharpoonup
\overline{\varrho\mathbf u\otimes\mathbf u}\text{~weakly
in~}L^{q}(\Omega'\times (0,T)).
\end{split}\end{equation}
Here $r,p\in (2,\infty)$ and $q,s, z\in (1,\infty)$ are given by
Lemma \ref{finallemma1}. It follows from energy estimate
\eqref{navier7} and convexity of the function
$\varrho\log(1+\varrho)$  that $\varrho$ and $\mathbf u$ satisfies
inequalities \eqref{stokes12}. Moreover, $\varrho\in L^r(0,T;
L^s(\Omega'))$, $\overline{\varrho\mathbf{u}}\in L^p(0,T;
L^{z}(\Omega'))$ and $\overline{\varrho\mathbf u\otimes\mathbf u}\in
L^{q}(\Omega'\times (0,T))$ for ever $\Omega'\Subset\Omega$. Finally
notice that in view of estimates \eqref{final1} we have
$$
\varepsilon\int_{\Omega'\times(0,T)} {\varrho_\varepsilon}^{\gamma}
\to 0\text{~~as~~} \varepsilon\to 0.
$$
Substituting $(\varrho_\varepsilon, \mathbf u_\varepsilon)$ and
$\varphi(\varrho_\varepsilon):=\varrho_\varepsilon$ into
\eqref{navier8},  \eqref{navier10} and letting $\varepsilon\to 0$ we
obtain that the integral identities
\begin{multline}\label{final8}
\int_{Q_T}\big (\overline{\varrho\mathbf{u}}\cdot
\partial_t\boldsymbol{\xi}+
\overline{\varrho\mathbf{u}\otimes\mathbf{u}}:\nabla\boldsymbol{\xi}+\varrho\div
\boldsymbol{\xi}-\mathbb
S(\mathbf{u}):\nabla\boldsymbol{\xi}\big)\,dxdt \\+\int_{Q}
\varrho\mathbf f\cdot\boldsymbol{\xi}\, dxdt+ \int_{\Omega }(
\varrho_0\mathbf u_0\cdot \boldsymbol{\xi})(x,0)\,dx=0
\end{multline}
\begin{equation}\label{final9}
\int_{Q_T}\big (\varrho
\partial_t\psi+\overline{\varrho\mathbf{u}}\cdot\nabla\psi
\big)\,dxdt+\int_{\Omega } \varrho_0(x)\psi(x,0)\, dx=0
\end{equation}
hold  for all vector fields $\boldsymbol{\xi}\in C^\infty(Q_T)$
equal
  $0$ in a neighborhood of $\partial\Omega\times[0,T]$ and of the top
$\Omega\times\{t=T\}$ and for all  $\psi\in C^\infty(Q_T)$ vanishing
in a neighborhood of the top $\Omega\times\{t=T\}$. It remains to
prove that
\begin{equation}\label{final10}
\overline{\varrho\mathbf{u}}=\varrho\mathbf{u},\quad
\overline{\varrho\mathbf{u}\otimes\mathbf{u}}=\varrho\mathbf{u}\otimes\mathbf{u}
\text{~~a.e. in~~} Q_T
\end{equation}
The proof is standard, see \cite{FEIRBOOK}. We begin with the
observation that $\varrho_\varepsilon$ and $\mathbf u_\varepsilon$
satisfies the equations
\begin{equation}\label{final11}\begin{split}
    \partial_t(\varrho_\varepsilon\mathbf u_\varepsilon)=\text{~div~}
    \big(\mathbf S(\mathbf
    u_\varepsilon)-\varrho_\varepsilon\mathbf u_\varepsilon\otimes
 \mathbf u_\varepsilon\big)-\nabla
 p(\varrho_\varepsilon)+\varrho_\varepsilon \mathbf f,\,\,
\partial_t\varrho_\varepsilon=-\text{~div~}\big(\,\varrho_\varepsilon\mathbf
u_\varepsilon\,\big),
\end{split}
\end{equation}
which are understood in the sense of the distribution theory.Notice
that in view of the energy estimate \eqref{navier7}, the sequences
$\varrho_\varepsilon$, $\mathbf S(\mathbf
    u_\varepsilon)$, $\varrho_\varepsilon\mathbf u_\varepsilon\otimes
 \mathbf u_\varepsilon$, $p(\varrho_\varepsilon)$ are bounded in the
 space $L^2(0,T; L^1(\Omega))$.
 Choose an arbitrary function $\xi\in C^\infty_0(Q_T)$.
 Since the embedding
 $W^{3,2}_0(\Omega)\to C^1_0(\Omega)$ is bounded, it follows from
 \eqref{final11} that the sequences $\partial_t(\xi
 \varrho_\varepsilon)$ and $
 \partial_t(\xi\varrho_\varepsilon\mathbf u_\varepsilon)$ are
 bounded in $L^2(0,T; W^{-3,2}(\Omega))$. On the other hand, Lemma
 \ref{finallemma1} implies that the sequences
 $\xi
 \varrho_\varepsilon$ and $
 \xi\varrho_\varepsilon\mathbf u_\varepsilon$ are bounded in
 $L^r(0,T; L^s(\Omega))$ and $L^p(0,T; L^z(\Omega))$ respectively . Notice that
 $r, p>2$ and the embedding $W^{-1,2}(\Omega)\hookrightarrow
L^s(\Omega)$, $W^{-1}(\Omega)\hookrightarrow L^z(\Omega)$ is compact
for $s, z>1$. Applying the Dubinskii-Lions compactness Theorem we
conclude that the sequences $\xi
 \varrho_\varepsilon$ and $
 \xi\varrho_\varepsilon\mathbf u_\varepsilon$ are relatively
 compact in $L^2(0,T; W^{-1,2}(\Omega))$.  From this and \eqref{final6}
 we obtain
$$
\int_{Q_T}\xi\varrho_\varepsilon\mathbf u_\varepsilon\,dxdt \to
\int_{Q_T}\xi\varrho\mathbf u\,dxdt,\,\,\,
\int_{Q_T}\xi\varrho_\varepsilon\mathbf u_\varepsilon\otimes\mathbf
u_\varepsilon\,dxdt \to \int_{Q_T}\xi\varrho\mathbf u\otimes\mathbf
u\,dxdt
$$
as $\varepsilon\to 0$, which yields \eqref{final10}. This completes
the proof of Theorem \ref{stokes14}.
\subsection{Proof of Theorem \ref{stokes16}}
It suffices to note that estimate \eqref{stokes17} follows directly
from Theorem \ref{radontheorem}, and estimates \eqref{19} follow
from Proposition \ref{momentproposition} and Theorem
\ref{lebegtheorem}.
\begin{appendix}
\section{Proof of Lemmas \ref{sobolevlemma1} and \ref{sobolevlemma2}}\label{applem1}
\paragraph{Proof of Lemma \ref{sobolevlemma1}}Introduce the polar coordinates $\lambda=|\xi|\in \mathbb R^+$ and
$\boldsymbol\omega=|\xi|^{-1}\xi\in \mathbb S^1$. Applying the
Fubini Theorem we obtain
\begin{equation*}\begin{split}
    \mathfrak F g(\xi)=\frac{1}{2\pi}\int_{\mathbb R^2} e^{-ix\cdot \xi}g(x)\,dx=
   \frac{1}{2\pi}\int_{\mathbb R^2} e^{-i \lambda
   \boldsymbol \omega\cdot x}g(x)\, dx=\\\frac{1}{2\pi}\int_{-\infty}^\infty e^{-i\lambda \tau}\Big\{
   \int\limits_{\boldsymbol\omega\cdot x=\tau} g(x)\, dl\Big\}\,d\tau=
   \frac{1}{2\pi}\int_{-\infty}^\infty e^{-i\lambda \tau}
   \Phi(\boldsymbol\omega, \tau)\, d\tau=
   \frac{1}{\sqrt{2\pi}}\mathfrak F_\lambda \Phi(\boldsymbol\omega, \lambda),
\end{split}\end{equation*}
where $\mathfrak F_\lambda$ is the Fourier transform with respect to
$\tau$. We thus get
\begin{equation}\label{sobolev7}
| \mathfrak F g(\lambda \omega)|^2=\frac{1}{2\pi}|\mathfrak
F_\lambda \Phi(\boldsymbol\omega, \lambda)|^2
\end{equation}
 Since $\Phi$ is a real valued function, the
Plancherel identity yields
$$
\int_0^\infty |\mathfrak F_\lambda\Phi(\boldsymbol\omega,
\lambda)|^2d\lambda=\frac{1}{2} \int_{-\infty}^\infty |\mathfrak
F_\lambda \Phi(\boldsymbol\omega, \lambda)|^2d\lambda=\frac{1}{2}
\int_{-\infty}^\infty | \Phi(\boldsymbol\omega, \tau)|^2d\tau.
$$
Integrating both sides of \eqref{sobolev7} by $\lambda$ we conclude
that
$$
\int_{0}^\infty |\mathfrak F g(\lambda
\boldsymbol\omega)|^2d\lambda=\frac{1}{4\pi} \int_{-\infty}^\infty |
\Phi(\boldsymbol\omega, \tau)|^2d\tau
$$
It follows that
\begin{multline*}
\int\limits_{\mathbb R^2}|\xi|^{-1}|\mathfrak F g|^2\, d\xi=
\int\limits_{\mathbb S^1}\int\limits_0^\infty \frac{1}{\lambda}
|\mathfrak F g(\lambda\boldsymbol\omega)|^2 \lambda\, d\lambda
d\boldsymbol\omega= \\\int\limits_{\mathbb S^1}\int\limits_0^\infty
|\mathfrak F g(\lambda\boldsymbol\omega)|^2 \, d\lambda
d\boldsymbol\omega =\frac{1}{4\pi} \int_{\mathbb
S^1}\int_{-\infty}^\infty | \Phi(\boldsymbol\omega, \tau)|^2d\tau
d\boldsymbol\omega. \end{multline*} Recalling expression
\eqref{sobolev1} for $H^s$- norm we obtain the desired estimate
\eqref{sobolev6}.

\paragraph{Proof of Lemma \ref{sobolevlemma2}}
 Let $s>1/2$. It suffices to prove that
\begin{equation*}
\|uv\|_{H^{1/2}(\mathbb R^2)}\leq
 c\|u\|_{H^{1}(\mathbb R^2)} \|v\|_{H^{s}(\mathbb R^2)}
\end{equation*}
 for all $u\in H^{1}(\mathbb R^2)$ and for all $v\in H^{s}(\mathbb
R^2)$. Choose an arbitrary $u\in H^{1}(\mathbb R^2)$ and consider
the linear operator $\mathbf U:v\mapsto u\,v$. Set $\delta=s-1/2>0$.
Recall that $H^s(\mathbb R^2)$ coincides with $W^{s,2}(\mathbb
R^2)$. Since the embedding $H^{1+\delta}(\mathbb R^2)\hookrightarrow
L^\infty(\mathbb R^2)$ is bounded, we have
\begin{equation}\begin{split}\label{sobolev8ee}
    \|uv\|_{L^2(\mathbb R^2)}\leq \|v\|_{L^\infty(\mathbb R^2)}\,
    \|u\|_{L^2(\mathbb R^2)}\leq
    c\|v\|_{H^{1+\delta}(\mathbb R^2)}\|u\|_{H^{1}(\mathbb R^2)}
\end{split}\end{equation}
Since the embedding  $H^{1+\delta}(\mathbb R^2)\hookrightarrow W^{1,
2/(1-\delta)}(\mathbb R^2)$ and $H^{1}(\mathbb R^2)\hookrightarrow
L^{2/\delta}(\mathbb R^2)$ is bounded, see \cite{adams}, thm. 7.57,
we have
\begin{equation}\begin{split}\label{sobolev8eee}
    \|\nabla(uv)\|_{L^2(\mathbb R^2)}\leq
    \|v\|_{L^\infty(\mathbb R^2)}\,
    \|\nabla u\|_{L^2(\mathbb R^2)}+\|u\nabla
    v\|_{L^2(\Omega)}\leq\\
    c\|v\|_{H^{1+\delta}}\|u\|_{H^{1}(\mathbb R^2)}+
    \|\nabla v\|_{L^{ 2/(1-\delta)}(\mathbb
    R^2)}\|u\|_{L^{2/\delta}(\mathbb R^2)}\leq\\
    c\big( \| v\|_{H^{1+\delta}(\mathbb R^2)}+
    \| v\|_{W^{1, 2/(1-\delta)}(\mathbb R^2)}\,\big)
    \|u\|_{H^{1}(\mathbb R^2)}\leq c \| v\|_{H^{1+\delta}(\mathbb R^2)}
    \|u\|_{H^{1}(\mathbb R^2)}.
\end{split}\end{equation}
Combining \eqref{sobolev8ee}and \eqref{sobolev8eee} we obtain
\begin{equation}\label{sobolev8xx}
    \|\mathbf U v\|_{H^{1}(\mathbb R^2)}\leq c  \|u\|_{H^{1}(\mathbb R^2)}
\| v\|_{H^{1+\delta}(\mathbb R^2)}.
\end{equation}
On the other hand, the boundedness of the embedding
$H^{\delta}\hookrightarrow L^{ 2/(1-\delta)}(\mathbb R^2)$ implies
\begin{equation*}\begin{split}
    \|uv\|_{L^2(\mathbb R^2)}\leq
    \| v\|_{L^{ 2/(1-\delta)}(\mathbb
    R^2)}\|u\|_{L^{2/\delta}(\mathbb R^2)}\leq
    c \| v\|_{H^{\delta}(\mathbb R^2)}
    \|u\|_{H^{1}(\mathbb R^2)},
\end{split}\end{equation*}
which yields the estimate
\begin{equation*}
    \|\mathbf U v\|_{L^2(\mathbb R^2)}\leq c  \|u\|_{H^{1}(\mathbb R^2)}
\| v\|_{H^{\delta}(\mathbb R^2)}.
\end{equation*}
From this and \eqref{sobolev8xx} we conclude that   $\mathbf U$ is a
bounded operator from $H^{\delta}(\mathbb R^2)$ to $L^{ 2}(\mathbb
R^2)$ and from $H^{1+\delta}(\mathbb R^2)$ to $H^{1}(\mathbb R^2)$.
Moreover, its norm does not exceed $c\|u\|_{H^{1}(\mathbb R^2)}$.
Applying the interpolation theorem, \cite{bergh} Sec. 2.4, Sec. 6.4
Thm. 6.4.5, and noting that $1/2+\delta=s$ we obtain that the
desired inequality
\begin{equation*}
    \|uv\|_{H^{1/2}(\mathbb R^2)}\equiv \|\mathbf
    Uv\|_{H^{1/2}(\mathbb R^2)}\leq
   c\|u\|_{H^{1}(\mathbb R^2)} \|v\|_{H^{(\delta+1+\delta)/2}(\mathbb R^2)}=
 c\|u\|_{H^{1}(\mathbb R^2)} \|v\|_{H^{s}(\mathbb R^2)}
\end{equation*}
holds for all $u\in H^{1}(\mathbb R^2)$ and all $v\in H^{s}(\mathbb
R^2)$.

\end{appendix}

\end{document}